\newcounter{mtheorem}
\newtheorem{mtheorem}[mtheorem]{Theorem}
\newcommand{\noopsort}[1]{}
\newcommand{\EMPTY}[1]{}
\newtheorem{Theorem}[equation]{Theorem}
\newtheorem{Lemma}[equation]{Lemma}
\newtheorem{Corollary}[equation]{Corollary}
\newtheorem{Proposition}[equation]{Proposition}
\newtheorem{Question}[equation]{Question}
\newtheorem{Conjecture}[equation]{Conjecture}
\theoremstyle{definition}
\newtheorem{Definition}[equation]{Definition}
\theoremstyle{remark}
\numberwithin{equation}{section}
\begin{document}

\title{The Sign of Scalar Curvature on K\"ahler Blowups}
\date{\today}

\author{Garrett M. Brown}
\address{Department of Mathematics, University of California Berkeley, CA 94720, USA}
\email{gmbrown@berkeley.edu}

\date{\today}

\begin{abstract}
We show that if $(M,\omega)$ is any compact K\"ahler manifold, then the blowup of $M$ at any point furnishes a K\"ahler metric with scalar curvature globally and arbitrarily $C^0$-close to the scalar curvature of $\omega$. It follows that if $M$ admits a positive scalar curvature K\"ahler metric, then so do all of its blowups. This special case extends a result of N. Hitchin \cite{Hit} to surfaces and answers a conjecture of C. LeBrun \cite{LebSC} in the affirmative, consequently completing the classification of positive scalar curvature K\"ahler surfaces as being precisely those of negative Kodaira dimension (i.e. blowups of either the projective plane or a holomorphic bundle of projective lines over a Riemann surface).

\end{abstract}

\maketitle

\tableofcontents

\section{Introduction}

\subsection{Overview} If $M$ is a complex manifold and $p \in M$, let $Bl_pM$ denote the blowup of $M$ at $p$, and $\pi: Bl_pM \rightarrow M$ the blowdown map. Let $E$ denote the exceptional divisor, and $[E] \in H^2(M,\mathbb C)$ its Poincar\'e dual. The main result of this work is the following.

\begin{mtheorem}[Main Theorem]\label{mainthm}
    Let $(M,\omega)$ be a compact K\"ahler manifold of complex dimension $n \geq 2$ and scalar curvature $S(\omega)$. For any $p \in M$, there exists a sequence $\omega_i$ of K\"ahler metrics on $Bl_pM$ with
    \[|S(\omega_i) - S(\omega)| \rightarrow 0 \text{ uniformly on } Bl_pM \text{ as } i \rightarrow \infty,\]
    whose K\"ahler classes $[\omega_i] \in H^{1,1}(M)$ converge to $\pi^*[\omega]$ as $i \rightarrow \infty$ (we implicitly pull $S(\omega)$ back along $\pi$).
\end{mtheorem}

In particular, if $(M, \omega)$ has positive scalar curvature, then all its blowups also have K\"ahler metrics with positive scalar curvature (see Theorem ~\ref{psccor}). For K\"ahler surfaces, this finishes a conjecture of C. LeBrun \cite{LebSC}, completing a classification of compact positive scalar curvature K\"ahler surfaces and drawing an equivalence with positive scalar curvature Riemannian geometry. We now state it as a theorem.

\begin{mtheorem}\label{conj}
    Let $M$ be a compact complex surface of K\"ahler-type (see section 7). Then the following are equivalent.
    \begin{enumerate}
        \item $M$ has a K\"ahler metric for which the total scalar curvature is positive.
        \item $M$ has a K\"ahler metric of positive scalar curvature.
        \item $M$ has a Riemannian metric of positive scalar curvature.
        \item $M$ has Kodaira dimension $-\infty$.
        \item $M$ is obtained from $\mathbb C\mathbb P^2$ or $\mathbb P(E)$ by a finite sequence of blowups, where $E$ is a rank 2 holomorphic vector bundle over a compact Riemann surface.
    \end{enumerate}
\end{mtheorem}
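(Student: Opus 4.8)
The plan is to prove the cycle
\[
(2) \Rightarrow (1) \Rightarrow (4) \Rightarrow (5) \Rightarrow (2),
\]
supplemented by $(2) \Rightarrow (3) \Rightarrow (4)$; combining these collapses all five statements into a single equivalence class (the cycle gives $(1) \Leftrightarrow (2) \Leftrightarrow (4) \Leftrightarrow (5)$, and $(2) \Rightarrow (3) \Rightarrow (4) \Rightarrow (2)$ brings in $(3)$). The implications $(2) \Rightarrow (1)$ and $(2) \Rightarrow (3)$ are immediate: a pointwise positive function has positive integral, and a K\"ahler metric is in particular a Riemannian metric.

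For $(1) \Rightarrow (4)$ I would use the identity expressing the total scalar curvature of a K\"ahler metric $\omega$ on a surface as a positive multiple of $c_1(M) \cdot [\omega]$; thus $(1)$ asserts $K_M \cdot [\omega] < 0$ for some K\"ahler class $[\omega]$. If instead $\kappa(M) \geq 0$, then some positive power of $K_M$ is effective, so $K_M$ is pseudo-effective and hence pairs non-negatively with every K\"ahler class --- a contradiction --- whence $\kappa(M) = -\infty$ (see also \cite{Yau}). For $(3) \Rightarrow (4)$ I would invoke Seiberg--Witten theory: a compact complex surface of K\"ahler type with $\kappa(M) \geq 0$ carries a nontrivial Seiberg--Witten basic class, namely the canonical class (via the results collected in \cite{FM}, with the usual chamber argument when $b^+ = 1$), and the existence of a basic class obstructs any metric of positive scalar curvature. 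Hence a surface of K\"ahler type admitting a positive scalar curvature Riemannian metric has $\kappa(M) = -\infty$.

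The implication $(4) \Rightarrow (5)$ is the Kodaira--Enriques classification: a compact complex surface of K\"ahler type with Kodaira dimension $-\infty$ is rational or birationally ruled, hence is obtained from $\mathbb C \mathbb P^2$ or from a geometrically ruled surface $\mathbb P(E)$ over a compact Riemann surface by a finite sequence of blowups of points. For $(5) \Rightarrow (2)$, the minimal surfaces in question --- $\mathbb C \mathbb P^2$ with the Fubini--Study metric, and the ruled surfaces $\mathbb P(E)$ --- carry positive scalar curvature K\"ahler metrics, the latter being precisely the minimal case of LeBrun's conjecture settled in \cite{LebSC}. An arbitrary surface as in $(5)$ is reached from one of these by blowing up finitely many points one at a time, and applying the Main Theorem at each stage --- using the stronger $C^0$-approximation statement of Theorem~\ref{approx}, so that both positivity of the scalar curvature and the availability of an admissible K\"ahler class of the form $\pi^*[\omega] - \epsilon^2[E]$ survive each step --- yields a positive scalar curvature K\"ahler metric on it. This closes the cycle; the negative scalar curvature statements are handled identically, replacing $\mathbb C \mathbb P^2$ and ruled surfaces by surfaces with $K_M$ big and using the reverse sign conventions throughout.

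The main obstacle is the step $(5) \Rightarrow (2)$: descending from the minimal models to all of their iterated blowups. This is exactly what the Main Theorem provides (together with LeBrun's treatment of the minimal case), and the subtlety is that one must blow up at successive points and verify, at each stage, both that an admissible K\"ahler class $\pi^*[\omega] - \epsilon^2[E]$ remains available and that the scalar curvature does not degenerate in the limit --- which is why the quantitative Theorem~\ref{approx}, rather than the bare existence assertion of the Main Theorem, is what the argument genuinely requires.
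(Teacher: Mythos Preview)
Your argument is correct and follows essentially the same route as the paper: the trivial implications $(2)\Rightarrow(1)$ and $(2)\Rightarrow(3)$; Yau's result for $(1)\Rightarrow(4)$; Seiberg--Witten theory via \cite{FM} (and \cite{LebSC} in the minimal case) for $(3)\Rightarrow(4)$; Kodaira--Enriques for $(4)\Rightarrow(5)$; and the Main Theorem applied inductively over successive blowups for $(5)\Rightarrow(2)$.

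A few small corrections. First, the paper attributes the existence of positive scalar curvature K\"ahler metrics on minimal ruled surfaces $\mathbb P(E)$ to Yau \cite{Yau}, not to \cite{LebSC}; LeBrun's contribution in \cite{LebSC} was to verify the full equivalence in the minimal case, but the construction of the metrics themselves predates that. Second, you do not in fact need the quantitative $C^0$-approximation of Theorem~\ref{approx} for the iteration in $(5)\Rightarrow(2)$: the bare Main Theorem already produces on $Bl_pM$ a K\"ahler metric with strictly positive scalar curvature, and that is all the input required to apply the Main Theorem again at the next blowup --- no uniform lower bound need be tracked across stages, since there are only finitely many. Finally, your closing remark that ``the negative scalar curvature statements are handled identically'' is incorrect and extraneous to the theorem: the paper explicitly observes (Section~7.3) that no classification analogous to Theorem~\ref{conj} is known in the negative scalar curvature case.
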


N. Hitchin had already shown that the existence of a positive scalar curvature K\"ahler metric is preserved under blowup in complex dimension $n \geq 3$ \cite{Hit}. In Hitchin's work, via holomorphic normal coordinates, the metric on the blowup behaves somewhat like
\[\omega_{euc} + \epsilon p^*\omega_{FS},\]
where $p: Bl_0\mathbb C^n \rightarrow \mathbb C\mathbb P^{n-1}$ is the natural map, $\omega_{FS}$ the Fubini-Study metric, and $\epsilon > 0$ small. This metric on $Bl_0\mathbb C^n$ has positive scalar curvature for $n \geq 3$ which allows the argument to go through, but in $n = 2$ this metric is the scalar flat \emph{Burns metric}.

So, one needs a different approach in $n = 2$. To this end, our main inspiration for the proof is the many works on the problem of constructing extremal K\"ahler metrics on the blowup of a point across \cite{AP1} \cite{AP2} \cite{APS} \cite{Sz1} \cite{Sz2} \cite{DS}. Among these one can extract a few slightly different approaches for contructing K\"ahler metrics on a blowup with specified scalar curvature properties, the one we manage to apply to our situation is that of G. Sz\'ekelyhidi, especially as described in \cite{SzB}. The broad strategy is to first construct an initial family of metrics $\omega_\epsilon$ on the blowup, and then solve some appropriate equation for $\phi$ of the form
\begin{equation}\label{form}
S(\omega_\epsilon + \sqrt{-1}\partial\overline{\partial}\phi) - S(\omega) = \text{ something shrinking as } \epsilon \rightarrow 0.
\end{equation}
This naturally leads to studying estimates on the linearized scalar curvature, but for technical reasons we are pushed instead onto its adjoint. In the work on extremal metrics, one encounters obstructions, but the added flexibility in our problem allows us to avoid such issues. Our approach works in all dimensions, though there are some additional difficulties in complex dimension 2 as in the case of extremal metrics. In contrast with the work of Hitchin, in our approach the scalar flatness of the Burns metric is an asset rather than a detriment, as we crucially exploit the special properties of the linearized scalar curvature operator of a constant scalar curvature K\"ahler (cscK) metric.

\subsection{Organization}

We begin in section 2 by constructing the family $\omega_\epsilon$ on the blowup by gluing a scaled down asymptotically Euclidean, scalar flat metric on $Bl_0\mathbb C^n$ to the original metric $\omega$. In section 3 we introduce the weighted H\"older spaces on the blowup suitable to establish uniform estimates in the parameter $\epsilon$. Section 4 is devoted to proving uniform estimates on the inverse of some perturbation of the formal adjoint of linearized scalar curvature for the family $\omega_\epsilon$. To get better behavior in the gluing region when $n = 2$, in section 5 we perturb the family $\omega_\epsilon$ while preserving the uniform estimates proved in section 4. In section 6, we write down an equation of the form \eqref{form} which is solved via the contraction mapping principle, for which the estimates of section 4 are applied. Finally in section 7, among other things we discuss the resulting classification in $n = 2$ for positive scalar curvature K\"ahler metrics.

\subsection{Conventions} If $M$ is a complex manifold with complex structure $J$, and $g$ is a Riemannian metric, we define a real $(1,1)$-form
\[\omega(-,-) := g(J-,-),\]
and $g$ is said to be K\"ahler with respect to the complex structure if equivalently $d\omega = 0$ or $J$ is parallel with respect to $g$. In this case $\omega$ is called the K\"ahler form, and we usually just denote a K\"ahler manifold by $(M,\omega)$ since then $g$ is determined by $\omega$ and the complex structure. All tensors on $M$, including $g$ and $\omega$ will be extended to the complexified tangent bundle by extending symmetric bilinearly (some authors may instead take the Hermitian extension of $g$).

Some tensors on the complexified tangent bundle will vanish on some combination of entries in the $\pm\sqrt{-1}-$eigenspaces of the complex structure $J$. For example, the extension of $g$ will vanish when both entries are in the $\sqrt{-1}$, or $-\sqrt{-1}$ eigenspaces. Thus, when we have chosen a local holomorphic coordinate $z$, for convenience we may denote
\[g_{j\overline{k}} := g\left(\frac{\partial}{\partial z_j}, \frac{\partial}{\partial \overline{z}_k}\right).\]
Another example is the curvature tensor of a K\"ahler metric, which only has components $Rm_{i\overline{j}k\overline{\ell}}$. Except when indicated in such special cases, all traces will be taken over all components.

Due to the small parameter $\epsilon$ for the family $\omega_\epsilon$, we will often write estimates that involve many constants that change from line to line, but we will usually denote them as the same unless otherwise indicated, the important part is that they are independent of $\epsilon$ unless otherwise indicated.

\subsection{Acknowledgements}  First, the author would like to extend his sincere gratitude to his advisor, Song Sun, for thoughtfully suggesting this problem, many fruitful discussions throughout the course of the project, and constant support. He would also like to thank Gabor Sz\'ekelyhidi for explaining some points of his work on blowing up extremal metrics, Mingyang Li and Junsheng Zhang for helpful comments on initial versions of this paper, and Yifan Chen, Charles Cifarelli, Carlos Esparza, Yueqing Feng, Daigo Ito, Tobias Shin, and Keshu Zhou for stimulating discussions and encouragement. This work was supported by a National Science Foundation Graduate Research Fellowship, grant number DGE-2146752.

\section{The Metric $\omega_\epsilon$ on the Blowup}

In this section we construct a family of metrics on $Bl_pM$ with very nice analytic properties. This type of construction is done for example in \cite{Sz1}.

The space $Bl_pM$ is constructed by replacing a neighborhood of $p \in M$ with a neighborhood of the zero section of $O(-1) \cong Bl_0\mathbb C^n$. Thus, how we will construct $\omega_\epsilon$ is by gluing the existing metric on $M$ to a special metric on $Bl_0\mathbb C^n$.

On $Bl_0\mathbb C^n$ there exists a complete, asymptotically Euclidean, scalar flat K\"ahler metric $\eta$ known as the \textit{Burns-Simanca} metric \cite{LeMass}, \cite{Sim}. Using the usual holomorphic coordinates $Bl_0\mathbb C^n\setminus E \cong \mathbb C^n \setminus \{0\}$, when $n = 2$
\begin{equation}\label{omega2}
\eta = \sqrt{-1}\partial\overline{\partial}(|w|^2 + \log|w|).
\end{equation}
In the case of $n \geq 3$ the metric is less explicit; there is $\psi \in C^\infty(\mathbb C^n \setminus \{0\})$ such that $\psi = O(|w|^{4 - 2n})$ at infinity and
\begin{equation}\label{omega3up}
\eta = \sqrt{-1}\partial\overline{\partial}(|w|^2 + \psi(w)).
\end{equation}
Note that this presentation implies that in these coordinates the components of the Ricci tensor decay like $O(|w|^{-2n})$.

On the other hand we let $z$ be a holomorphic normal coordinate on $M$ centered at $p$, so that in this neighborhood
\begin{equation}\label{normal}
\omega = \sqrt{-1}\partial\overline{\partial}(|z|^2 + \varphi(z)),\> \varphi = O(|z|^4),
\end{equation}
and without loss of generality we assume the coordinate exists in the ball $B_2$, that is $|z| \leq 2$. We glue $Bl_0\mathbb C^n$ to $M_p := M \setminus \{p\}$ along the coordinate change $z = \epsilon w$, so $B_1 \subset M$ is identified with the ball of radius $\epsilon^{-1}$ in $Bl_0\mathbb C^n$. We will denote all balls around the exceptional divisor in $Bl_0\mathbb C^n$ with a tilde, so for example $B_1$ in $M$ is identified with $\tilde{B}_{\epsilon^{-1}}$ in $Bl_0\mathbb C^n$. Note that these are not metric balls rather just balls with respect to the coordinates $z$ and $w$.

Now let $\gamma$ be a smooth cutoff function which is $1$ outside of $B_2$ and $0$ inside of $B_1$ on $M$. Then we set $\gamma_1(z) = \gamma(z/r_\epsilon)$ and $\gamma_2 = 1 - \gamma_1$, where $r_\epsilon = \epsilon^\beta$ for some $\beta \in (1/2,2/3)$ when $n = 2$, and $\beta = (n-1)/n$ when $n \geq 3$. This choice of $\beta$ is taken for technical reasons to be seen later. Now we define
\begin{equation}\label{omegaformula}
\begin{aligned}
\omega_\epsilon := \sqrt{-1}\partial\overline{\partial}(|z|^2 + \gamma_1(z)\varphi(z) + \epsilon^2\gamma_2(z)\psi(\epsilon^{-1}z)) \text{ when } n \geq 3 \text{, and} \\
\omega_\epsilon := \sqrt{-1}\partial\overline{\partial}(|z|^2 + \gamma_1(z)\varphi(z) + \epsilon^2\gamma_2(z)\log|\epsilon^{-1}z|), \text{ when } n = 2.
\end{aligned}
\end{equation}

This is done in such a way so that $\omega_\epsilon \equiv \omega$ on $M \setminus B_{2r_\epsilon}$, and on $B_{r_\epsilon}$ we have that $\omega_\epsilon \equiv \epsilon^2\eta$ when we make the identification with $\tilde{B}_{\epsilon^{-1}r_\epsilon}$ in $Bl_0\mathbb C^n$. The K\"ahler class of $\omega_{\epsilon}$ is $\pi^*[\omega] - \epsilon^2[E]$.

\section{Weighted H\"older Spaces}
The geometry of the problem naturally breaks up the analysis into $M_p$ and $Bl_0\mathbb C^n$. It will be necessary for us to work in function spaces that measure growth and decay of a specified amount at the ends, these are the weighted H\"older spaces. Then on the total space of $Bl_pM$ we will work in a glued version of these functions spaces, introduced in \cite{Sz1} for the extremal metrics on blowups problem. For background on weighted H\"older spaces there is for example \cite{PacRiv}. We start with the case of $\mathbb R^n \setminus \{0\}$.

\begin{Definition}
    Let $k$ a nonnegative integer, $\alpha \in (0,1)$, and $\delta \in \mathbb R$. We define $C^{k,\alpha}(\mathbb R^n \setminus \{0\})$ to be the Banach space consisting of locally $C^{k,\alpha}$ functions $f: \mathbb R^n \setminus \{0\} \rightarrow \mathbb R$ for which the norm
    \[||f||_{C_\delta^{k,\alpha}(\mathbb R^n\setminus \{0\})} := \sup_{r > 0}||f_r||_{C^{k,\alpha}(B_2 \setminus B_1)} < \infty,\text{ where } f_r(x) := r^{-\delta}f(rx).\]
\end{Definition}
This norm seems opaque at first, but it is really just a slick way of requiring $\nabla^if = O(|x|^{\delta - i})$ at zero and infinity.

The primary operators of study in our problem are the linearization of scalar curvature at $\omega$ within its K\"ahler class, and its formal adjoint $L^*_\omega$. Their formulas are well known, first we have
\begin{equation}\label{Lformula}
L_\omega \phi := \frac{d}{dt}\bigg\rvert_{t = 0}S(\omega + t\sqrt{-1}\partial\overline{\partial}\phi) = -\frac{1}{4}\Delta^2_\omega \phi - \frac{1}{2}\text{Ric}_\omega\cdot \nabla_\omega^2\phi.
\end{equation}
Here, $\Delta_\omega = -d^*d$ denotes the full negative definite Riemannian Laplacian for $\omega$, and $\cdot$ denotes contraction with respect to the metric. For the formal adjoint,
\[L_\omega^*\phi = -\frac{1}{4}\Delta^2_\omega\phi - \frac{1}{4}\text{tr}_\omega\nabla_\omega(\text{tr}_\omega\nabla_\omega(\phi\text{Ric}_\omega)) = -\frac{1}{4}\Delta^2_\omega\phi - \frac{1}{2}\text{Ric}_\omega\cdot \nabla_\omega^2\phi - \partial \phi \cdot \overline{\partial}S(\omega) - \partial S(\omega) \cdot \overline{\partial}\phi - \frac{1}{2}\phi\Delta_\omega S(\omega),\] 
where $\cdot$ denotes contraction with respect to the metric, and $\text{tr}_\omega$ is trace taken with respect to the last two components, where components coming from covariant derivatives we list at the end. In coordinates, this means
\[\text{tr}_\omega\nabla_\omega(\phi \text{Ric}_\omega) = \nabla_{k}(\phi\text{Ric}\indices{_{\omega,j}^k}).\]

It is crucial to our argument to notice
\begin{equation}\label{diff}
(L^*_\omega - L_\omega)\phi = - \partial \phi \cdot \overline{\partial}S(\omega) - \partial S(\omega) \cdot \overline{\partial}\phi - \frac{1}{2}\phi\Delta_\omega S(\omega).
\end{equation}
In particular, $L^*_\omega$ and $L_\omega$ only differ by first order terms in $\phi$. Note that the highest order terms in $L_\omega$ and $L_\omega^*$ are the bilaplacian associated to $\omega$. Not only that, but on $M_p$ in the normal coordinate $z$ these terms approximate the usual Euclidean bilaplacian near $p$, and since $\eta$ is aymptotically Euclidean and scalar flat, $L_\eta = L^*_\eta$ approximates the Euclidean bilaplacian near infinity. Thus, we require results on the bilaplacian in these weighted spaces.

\begin{Proposition}\label{harm}
If $\delta \notin \mathbb Z \setminus (2 - n,0)$, the Euclidean Laplacian viewed as a map
\[\Delta_{euc}: C_\delta^{k,\alpha}(\mathbb R^n\setminus \{0\}) \rightarrow C^{k-2,\alpha}_{\delta - 2}(\mathbb R^n \setminus \{0\})\]
is an isomorphism. Thus, the same is true for the Euclidean bilaplacian $\Delta^2_{euc}: C^{k,\alpha}_\delta(\mathbb R^n \setminus \{0\}) \rightarrow C^{k-4,\alpha}_{\delta-4}(\mathbb R^n \setminus \{0\})$ for $\delta \notin \mathbb Z \setminus (4 - n,0)$.
\end{Proposition}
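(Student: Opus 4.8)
The plan is to reduce to a decoupled family of ordinary differential equations by separating variables on $\mathbb R^n\setminus\{0\}\cong(0,\infty)_r\times S^{n-1}$, to read off the ``indicial roots'' (the exceptional weights, which will turn out to be exactly $\mathbb Z\setminus(2-n,0)$), and then to establish injectivity, the a priori estimate, and surjectivity separately; the bilaplacian statement then follows by composing two copies of the Laplacian isomorphism. In these coordinates $\Delta_{euc}=\partial_r^2+\tfrac{n-1}{r}\partial_r+\tfrac1{r^2}\Delta_{S^{n-1}}$, so expanding $u$ and $f$ in an $L^2(S^{n-1})$ basis $\{Y_j\}$ of eigenfunctions of $-\Delta_{S^{n-1}}$ with eigenvalues $\lambda_j=k(k+n-2)$, $k=k(j)\geq0$, the equation $\Delta_{euc}u=f$ decouples into the Euler-type equations $u_j''+\tfrac{n-1}{r}u_j'-\tfrac{\lambda_j}{r^2}u_j=f_j$, whose homogeneous solutions are $r^\gamma$ with $\gamma\in\{k,\,2-n-k\}$ (and $\{1,\log r\}$ when $n=2$, $k=0$). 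The weights for which one of these homogeneous solutions is genuinely of that order at an end thus form the set $\mathcal I:=\{k:k\geq0\}\cup\{2-n-k:k\geq0\}=\mathbb Z\setminus(2-n,0)$, which is symmetric under $\gamma\mapsto(2-n)-\gamma$; the hypothesis on $\delta$ is precisely $\delta\notin\mathcal I$.

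Injectivity is then immediate: if $\Delta_{euc}u=0$ with $u\in C^{k,\alpha}_\delta$, each mode $u_j(r)$ is a combination of $r^k,r^{2-n-k}$ (or $1,\log r$), and being $O(r^\delta)$ both as $r\to0$ and as $r\to\infty$ forces the exponent to equal $\delta$ — impossible since $\delta\notin\mathcal I$. For the a priori bound $\|u\|_{C^{k,\alpha}_\delta}\leq C\|\Delta_{euc}u\|_{C^{k-2,\alpha}_{\delta-2}}$ I would argue by contradiction: taking $u_m$ with $\|u_m\|_{C^{k,\alpha}_\delta}=1$ and $\|\Delta_{euc}u_m\|_{C^{k-2,\alpha}_{\delta-2}}\to0$, one uses that the rescalings $f\mapsto r^{-\delta}f(r\,\cdot)$ act isometrically on $C^{k,\alpha}_\delta(\mathbb R^n\setminus\{0\})$ and commute with $\Delta_{euc}$ (up to the shift $\delta\mapsto\delta-2$ on the target) to normalize so that $\|u_m\|_{C^{k,\alpha}(B_2\setminus B_1)}\geq\tfrac12$; interior Schauder estimates and Arzel\`a--Ascoli then extract a subsequential limit $u_\infty$, harmonic on $\mathbb R^n\setminus\{0\}$, nonzero on $B_2\setminus B_1$, and lying in $C^{k,\alpha}_\delta$ by lower semicontinuity of the weighted norm — contradicting injectivity.

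For surjectivity, given $f\in C^{k-2,\alpha}_{\delta-2}$: when $\delta\in(2-n,0)$ the Newtonian potential of $f$ converges absolutely — this range is exactly where the defining integral is convergent at both $0$ and $\infty$ — and the standard decay estimates for it place the solution in $C^{k,\alpha}_\delta$. For a general admissible $\delta$, the a priori estimate shows $\Delta_{euc}$ has closed range, so it is enough to check the cokernel vanishes; since $\Delta_{euc}$ is formally self-adjoint and $\mathcal I$ is invariant under $\gamma\mapsto(2-n)-\gamma$, the relevant adjoint problem is $\Delta_{euc}v=0$ on the conjugate weight $2-n-\delta\notin\mathcal I$, which has only the zero solution by the injectivity argument. (Alternatively one peels off the finitely many spherical-harmonic modes for which $\delta$ lies outside the band between the two indicial roots, solving those by variation of parameters and correcting by explicit homogeneous solutions, and handles the remaining modes via the Newtonian potential, as in \cite{PacRiv}.) Either way $\Delta_{euc}:C^{k,\alpha}_\delta\to C^{k-2,\alpha}_{\delta-2}$ is an isomorphism for $\delta\notin\mathcal I$.

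Finally $\Delta^2_{euc}=\Delta_{euc}\circ\Delta_{euc}$ factors as $C^{k,\alpha}_\delta\to C^{k-2,\alpha}_{\delta-2}\to C^{k-4,\alpha}_{\delta-4}$, hence is an isomorphism whenever $\delta\notin\mathcal I$ and $\delta-2\notin\mathcal I$; since an integer avoids both $\mathcal I$ and $\mathcal I+2$ exactly when it lies in $(2-n,0)\cap(4-n,2)=(4-n,0)$, this condition is $\delta\notin\mathcal I\cup(\mathcal I+2)=\mathbb Z\setminus(4-n,0)$, as stated. The one genuinely substantive point is surjectivity for weights outside the central band $(2-n,0)$: there the Newtonian potential no longer converges absolutely and one must either invoke the Fredholm/duality theory for the Laplacian on a conical end or carefully bookkeep the finitely many explicit homogeneous corrections; injectivity, the a priori estimate, and the reduction for $\Delta^2_{euc}$ are all soft by comparison.
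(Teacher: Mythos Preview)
The paper does not give its own proof of this proposition: after the statement it simply writes ``For a proof see for example \cite{PacRiv},'' and then deduces the bilaplacian assertion from the Laplacian one by composition, exactly as you do in your final paragraph. Your sketch --- separation of variables on $(0,\infty)_r\times S^{n-1}$ to identify the indicial set $\mathbb Z\setminus(2-n,0)$, mode-by-mode injectivity, a scaling-plus-Schauder contradiction argument for the a priori estimate, and surjectivity via the Newtonian potential in the central band together with duality/Fredholm theory outside it --- is precisely the standard argument one finds in the cited reference, so there is nothing to compare: your proposal is correct and is essentially the proof the paper is pointing to.
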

For a proof see for example \cite{PacRiv}. These discrete values for which the operators fail to be isomorphisms are known as \emph{indicial roots}. 

We now define the weighted spaces on $M_p$ and $Bl_0\mathbb C^n$. These are similar to the $\mathbb R^n \setminus \{0\}$ case, except these spaces only have one end. Note that given a function $f: M_p \rightarrow \mathbb R$, we can view $(1 - \gamma)f$ as a function on $\mathbb C^n \setminus \{0\}$ via the coordinate $z$ and extension by zero. Likewise, given a function $f: Bl_0\mathbb C^n \rightarrow \mathbb R$ we can view $\gamma f$ as a function on $\mathbb C^n \setminus \{0\}$. These spaces are also defined in \cite{AP1}.

\begin{Definition}  Let $k$ a nonnegative integer, $\alpha \in (0,1)$, and $\delta \in \mathbb R$.
    \begin{itemize}
    \item Define $C^{k,\alpha}_\delta(M_p)$ to be the Banach space of locally $C^{k,\alpha}$ functions $f: M_p \rightarrow \mathbb R$ such that
    \[||f||_{C_\delta^{k,\alpha}(M_p)} := ||f||_{C^{k,\alpha}_{\omega}(M \setminus B_1)} + ||(1 - \gamma)f||_{C_\delta^{k,\alpha}(\mathbb C^n \setminus \{0\})} < \infty.\]
    \item Define $C^{k,\alpha}_\delta(Bl_0\mathbb C^n)$ to be the Banach space of locally $C^{k,\alpha}$ functions $f: Bl_0\mathbb C^n \rightarrow \mathbb R$ such that
    \[||f||_{C_\delta^{k,\alpha}(Bl_0\mathbb C^n)} := ||f||_{C^{k,\alpha}_\eta(\tilde{B}_2)} + ||\gamma f||_{C_\delta^{k,\alpha}(\mathbb C^n \setminus \{0\})} < \infty.\]
    \end{itemize}
\end{Definition}

Finally, we can define the weighted spaces on $Bl_pM$ as they are defined in \cite{Sz1}. We will provide two equivalent norms. To this end, let $f: Bl_pM \rightarrow \mathbb R$, and $\delta \in \mathbb R$. For $r \in (\epsilon,1/2)$ we can define 
\[f_r: B_2\setminus B_1 \subset M \rightarrow \mathbb R\]
\[f_r(z) = r^{-\delta}f(rz),\]
and also define
\[f_\epsilon: \tilde{B}_1 \subset Bl_0\mathbb C^n \rightarrow \mathbb R\]
\[f_\epsilon(w) = \epsilon^{-\delta}f(z),\]
where again recall that $\tilde{B}_1 \subset Bl_0\mathbb C^n$ is identified with $B_\epsilon \subset Bl_pM$ via $z = \epsilon w$.

\begin{Definition}
    Let $k$ a nonnegative integer, $\alpha \in (0,1)$, and $\delta \in \mathbb R$. We let $C^{k,\alpha}_\delta(Bl_pM)$ denote the Banach space of locally $C^{k,\alpha}$ functions on $Bl_pM$ with the norm
    \[||f||_{C_\delta^{k,\alpha}(Bl_pM)} = ||f||_{C^{k,\alpha}_\omega(M \setminus B_1)} + \sup_{\epsilon < r < 1/2}r^{-\delta}||f||_{C^{k,\alpha}_{r^{-2}\omega_\epsilon}(B_{2r} \setminus B_r)} + \epsilon^{-\delta}||f||_{C^{k,\alpha}_{\epsilon^{-2}\omega_\epsilon}(B_\epsilon)}.\]
    An equivalent norm in $\epsilon$ is given by   
    \[||f||_{C_\delta^{k,\alpha}(Bl_pM)} = ||f||_{C^{k,\alpha}_\omega(M \setminus B_1)} + \sup_{\epsilon < r < 1/2}||f_r||_{C^{k,\alpha}(B_2 \setminus B_1)} + ||f_\epsilon||_{C^{k,\alpha}_\eta(\tilde{B}_1)},\]
    where in $B_2 \setminus B_1$ we are measuring the norms with the Euclidean metric.
\end{Definition}

The first definition can be convenient due to the fact that it is less coordinate dependent. The second is also convenient when working on computations solely in the normal coordinate $z$ in the gluing region. The first definition naturally extends to a weighted norm for tensors on $Bl_pM$. For the second definition, to extend the norm to tensors, by the middle term we will mean to measure the components of the tensor with respect to the normal coordinate $z$. Despite the notation $C^{k,\alpha}_\delta(Bl_pM)$, the norms also depend on $\epsilon$.

Note that the actual functions in these spaces never changes with $\delta$ or $\epsilon$, it always just consists of locally $C^{k,\alpha}$ functions on $Bl_pM$, only the norm changes with $\epsilon$ and $\delta$. Once again, upon first glance these norms are somewhat complicated, but the idea is that if $||f||_{C_\delta^{k,\alpha}(Bl_pM)} \leq c$, then
\[|\nabla^if| \leq c,\> |z| \geq 1\]
\[|\nabla^if| \leq c|z|^{\delta - i},\> \epsilon \leq |z| \leq 1\]
\[|\nabla^if| \leq c\epsilon^{\delta - i},\> |z| \leq \epsilon.\]

We end this section with various estimates on perturbations of $\omega_\epsilon$ and corresponding geometric quantities in the weighted norms that we will need later. Let $g_\epsilon$ denote the metric associated to $\omega_\epsilon$. These mostly appear in \cite{Sz1}.

\begin{Lemma}\label{PertEst}
For any nonnegative integer $k$, $\alpha \in (0,1)$ and $\delta \in \mathbb R$, there is a constant $c_0$ with the following property: there is a constant $C > 0$ independent of $\epsilon$ sufficiently small such that for any $||\phi||_{C_2^{k+4,\alpha}(Bl_pM)} < c_0$ we have $\omega_\phi:= \omega_\epsilon + \sqrt{-1}\partial\overline{\partial}\phi$ is positive and
\[||g_\phi - g_\epsilon||_{C_{\delta - 2}^{k+2,\alpha}(Bl_pM)},\> ||g_{\phi}^{-1} - g_\epsilon^{-1}||_{C_{\delta - 2}^{k+2,\alpha}(Bl_pM)}, ||Rm_{g_\phi} - Rm_{g_\epsilon}||_{C_{\delta - 4}^{k,\alpha}(Bl_pM)} < C||\phi||_{C_\delta^{k+4,\alpha}(Bl_pM)},\]
\[||L_{\omega_\phi} - L_{\omega_\epsilon}||_{C^{k+4,\alpha}_\delta \rightarrow C^{k,\alpha}_{\delta - 4}}, ||L^*_{\omega_\phi} - L^*_{\omega_\epsilon}||_{C^{k+4,\alpha}_\delta \rightarrow C^{k,\alpha}_{\delta - 4}} < C||\phi||_{C^{k+4,\alpha}_2(Bl_pM)},\]
where $g_\phi$ denotes the metric associated to $\omega_\phi$.
\end{Lemma}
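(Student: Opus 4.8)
The plan is to treat each of the six estimates as a consequence of the fact that the quantities being differenced are algebraic, respectively differential, expressions in $g_\epsilon$, $g_\epsilon^{-1}$, and finitely many covariant derivatives of $g_\epsilon$, together with the corresponding data for $g_\phi$. The key structural observation is that $g_\phi - g_\epsilon$ is exactly $\sqrt{-1}\partial\overline{\partial}\phi$ read as a Hermitian form, so in the normal coordinate $z$ one has $(g_\phi)_{j\overline k} = (g_\epsilon)_{j\overline k} + \partial_j\partial_{\overline k}\phi$, and hence $\|g_\phi - g_\epsilon\|_{C^{k+2,\alpha}_{\delta-2}} \leq C\|\phi\|_{C^{k+4,\alpha}_\delta}$ is immediate from the definition of the weighted norm on $Bl_pM$ (losing two derivatives and shifting the weight by $2$, exactly as the $\sqrt{-1}\partial\overline\partial$ suggests). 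The constant $C$ is independent of $\epsilon$ because in each annular region $B_{2r}\setminus B_r$ the norm is taken with respect to the rescaled metric $r^{-2}\omega_\epsilon$, under which $\omega_\epsilon$ has bounded geometry uniformly in $\epsilon$ and $r$ — this is the whole point of the weighted spaces, and is implicit in their construction in \cite{Sz1}.

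From there I would proceed algebraically. For the inverse, write $g_\phi^{-1} - g_\epsilon^{-1} = -g_\phi^{-1}(g_\phi - g_\epsilon)g_\epsilon^{-1}$; the smallness hypothesis $\|\phi\|_{C^{k+4,\alpha}_2} < c_0$ guarantees via the first estimate (with $\delta = 2$, so the perturbation of $g_\epsilon$ is bounded in $C^{k+2,\alpha}_0$) that $g_\phi$ is positive and that $g_\phi^{-1}$ is bounded in $C^{k+2,\alpha}_0$ uniformly in $\epsilon$, provided $c_0$ is chosen small enough; one needs the standard fact that $C^{k,\alpha}_0$ is a Banach algebra and that inversion of matrices close to the identity (after rescaling so $g_\epsilon$ looks Euclidean) is smooth with uniform bounds. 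Multiplying the three factors and tracking weights — $g_\phi^{-1}$ and $g_\epsilon^{-1}$ contribute weight $0$, the middle factor weight $\delta - 2$ — gives the claimed bound. For the curvature, $Rm_{g_\phi}$ is a universal expression: second derivatives of $g_\phi$ plus $g_\phi^{-1}$ times first-derivative-squared terms (the Christoffel contraction). Differencing $Rm_{g_\phi} - Rm_{g_\epsilon}$ and inserting and subtracting cross terms, every summand is a product of factors each of which is either bounded (an entry of $g_\epsilon$, $g_\epsilon^{-1}$, $g_\phi^{-1}$, or a bounded number of their derivatives, all controlled in $C^{k,\alpha}_0$ by the preceding steps and the bounded-geometry of $\omega_\epsilon$) times one factor that is a difference — either $g_\phi - g_\epsilon$ in $C^{k+2,\alpha}_{\delta-2}$, $g_\phi^{-1} - g_\epsilon^{-1}$ in $C^{k+2,\alpha}_{\delta-2}$, or a derivative thereof. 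Since curvature costs two more derivatives than the metric and two more negative weight, the total lands in $C^{k,\alpha}_{\delta-4}$ with norm $\leq C\|\phi\|_{C^{k+4,\alpha}_\delta}$.

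For the last two lines, the operators $L_\omega$ and $L^*_\omega$ are, by \eqref{Lformula} and \eqref{diff}, universal expressions in $g^{-1}$, $\Ric_g$, $S(g)$ and their covariant derivatives acting on $\phi$: schematically $L_\omega = -\tfrac14(g^{-1}\nabla)^2\phi$-type fourth-order terms plus $\Ric\cdot\nabla^2\phi$, and $L^*_\omega - L_\omega$ adds the first-order terms involving $\partial S(\omega)$ and $\Delta_\omega S(\omega)$. Writing $L_{\omega_\phi} - L_{\omega_\epsilon}$ and expanding each coefficient as $(\text{coeff of }g_\phi) - (\text{coeff of }g_\epsilon)$, each such coefficient-difference is bounded in the appropriate weighted space by $C\|\phi\|_{C^{k+4,\alpha}_2}$: the $g^{-1}$-difference by the second estimate with $\delta = 2$, the $\Ric$-difference and $S$-difference by the curvature estimate (again with $\delta = 2$, noting $\Ric$ and $S$ are contractions of $Rm$). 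Since these are the coefficients of a fourth-order operator mapping $C^{k+4,\alpha}_\delta \to C^{k,\alpha}_{\delta-4}$, and multiplication by a $C^{k,\alpha}_0$ (resp. $C^{k+1,\alpha}_{-1}$ for the first-order coefficients, etc.) function respects these weighted spaces with uniform constants, we obtain the stated operator-norm bounds. I expect the only genuinely delicate point — and the main obstacle — to be bookkeeping the uniformity in $\epsilon$ of all the ambient bounds on $g_\epsilon$, $g_\epsilon^{-1}$, $Rm_{g_\epsilon}$ and the multiplication/algebra constants across the three regimes ($|z|\geq 1$, $\epsilon \leq |z|\leq 1$, $|z|\leq\epsilon$) simultaneously; once one grants (as the paper does, citing \cite{Sz1}) that $\omega_\epsilon$ has uniformly bounded geometry in each rescaled annulus and that the weighted Hölder spaces on $Bl_pM$ form a scale of Banach modules with $\epsilon$-independent structure constants, each individual estimate is a routine product-rule-and-weight-counting argument and presents no essential difficulty.
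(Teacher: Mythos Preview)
Your proposal is correct and follows essentially the same approach as the paper's proof: both arguments derive the metric estimate directly from the definition, use the identity $g_\phi^{-1} - g_\epsilon^{-1} = -g_\phi^{-1}(g_\phi - g_\epsilon)g_\epsilon^{-1}$ for the inverse, expand the explicit K\"ahler curvature formula $Rm_{i\overline{j}k\overline{\ell}} = -\partial_k\partial_{\overline{\ell}}g_{i\overline{j}} + g^{p\overline{q}}(\partial_kg_{i\overline{q}})(\partial_{\overline{\ell}}g_{p\overline{j}})$ for the curvature difference, and telescope the bilaplacian and lower-order coefficients for the operator estimate, tracking weights via the Banach-algebra property of $C^{k,\alpha}_0$ and the uniform bound $\|g_\epsilon\|_{C^{k,\alpha}_0}\leq C$. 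Your write-up is in fact somewhat more explicit than the paper's about why $c_0$ small forces positivity of $\omega_\phi$ and uniform control of $g_\phi^{-1}$, and about the $\epsilon$-independence of the multiplication constants; the paper records these as immediate consequences of \eqref{gep} and the construction of the weighted norms.
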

\begin{proof}
Note that by the definition of the weighted norms we have
\begin{equation}\label{gep}
    ||g_\epsilon||_{C_0^{k,\alpha}(Bl_pM)} < C
\end{equation}
for a constant independent of $\epsilon$. The estimate on $g_\phi - g_\epsilon$ is clear by definition. We then have
\[g_\phi^{-1} - g_\epsilon^{-1} = g_\phi^{-1}(g_\epsilon - g_\phi)g_{\epsilon}^{-1}.\]
Then the desired estimate follows from
\[||g_\phi^{-1} - g_\epsilon^{-1}||_{C^{k+2,\alpha}_{\delta - 2}} \leq C||g_\phi^{-1}||_{C_0^{k + 2,\alpha}}||\sqrt{-1}\partial\overline{\partial}\phi||_{C_{\delta - 2}^{k+2,\alpha}}||g_\epsilon^{-1}||_{C_0^{k + 2,\alpha}}.\]
For the curvature estimate, if $g$ is a K\"ahler metric we have the formula
\[Rm_{g,i\overline{j}k\overline{\ell}} = -\partial_k\partial_{\overline{\ell}}g_{i\overline{j}} + g^{p\overline{q}}(\partial_kg_{i\overline{q}})(\partial_{\overline{\ell}}g_{p\overline{j}}).\]
From this formula we have
\[(Rm_{g_\phi} - Rm_{g_\epsilon})_{i\overline{j}k\overline{\ell}} = -\partial_k\partial_{\overline{\ell}}(g_{\phi,i\overline{j}} - g_{\epsilon,i\overline{j}}) + g_\phi^{p\overline{q}}(\partial_kg_{\phi,i\overline{q}})(\partial_{\overline{\ell}}g_{\phi,p\overline{j}}) - g_\epsilon^{p\overline{q}}(\partial_kg_{\epsilon,i\overline{q}})(\partial_{\overline{\ell}}g_{\epsilon,p\overline{j}}),\]
and then it follows directly from the estimates on the metrics. The estimates on the linearized operator also follows, for example by writing
\[\Delta^2_{\omega_\phi}\xi - \Delta^2_{\omega_\epsilon}\xi = g_\phi^{p\overline{q}}\partial_p\partial_{\overline{q}}([g^{k\overline{\ell}}_{\phi} - g^{k\overline{\ell}}_\epsilon]\partial_{k}\partial_{\overline{\ell}}\xi) + (g_\phi^{p\overline{q}} - g_\epsilon^{p\overline{q}})\partial_p\partial_{\overline{q}}(g_\epsilon^{k\overline{\ell}}\partial_k\partial_{\overline{\ell}}\xi),\]
and then we have
\[||\Delta^2_{\omega_\phi}\xi - \Delta^2_{\omega_\epsilon}\xi||_{C_{\delta - 4}^{k,\alpha}} \leq C(||g_\phi^{-1}||_{C^{k,\alpha}_0}||g_\phi^{-1} - g_\epsilon^{-1}||_{C^{k+2,\alpha}_0}||\xi||_{C^{k+4,\alpha}_\delta} + ||g_\phi^{-1} - g_\epsilon^{-1}||_{C^{k,\alpha}_0}||g_\epsilon^{-1}||_{C^{k+2,\alpha}_0}||\xi||_{C_\delta^{k+4,\alpha}})\]
We can similarly, using the bounds on the curvature tensor and the metric, get bounds on the Ricci and scalar curvatures.
\end{proof}

\section{Estimates on the Adjoint of the Linearized Operator}

The main observation in this section is that we can prove similar estimates on $L_{\omega_\epsilon}$ in $\epsilon$ as appear for example in \cite{Sz1}, \cite{SzB}, for the operator $L_{\omega_\epsilon}^*$. There are many approaches one can take in this step, and one can find several in the literature on blowing up extremal metrics. The one that we manage to make work in our our particular problem is the approach that appears in \cite{SzB}.

One encounters some extra analytic difficulties in $n = 2$ when proving estimates on $L_{\omega_\epsilon}$ in our setting, one of those being the usual weaker estimates on $L_{\omega_\epsilon}$ compared to what we can prove when $n \geq 3$, but the more problematic issue in $n = 2$ is that we are unable to prove the desired estimates on $L_{\omega_\epsilon}$ when the kernel of $L_\omega^*$ only consists of functions vanishing at the blow up point. This is the reason why we prove the estimates for $L_\omega^*$ instead of $L_\omega$, as $(L_\omega^*)^* = L_\omega$ always at least has constants in its kernel; this is used in step 1.5 of the proof of Proposition~\ref{invprop}. Note that this fact, together with the fact that $L_\omega$ is index zero, guarantees that $L_\omega^*$ always has a nontrivial kernel.

In general we know little about the kernel of $L_\omega^*$ when $\omega$ is not cscK. However, fix an $L^2$-orthonormal basis $f_1, ..., f_d$ for the kernel of $L_\omega$.

\begin{Lemma}\label{Minv}
    There exists points $q_1, ..., q_d \in M_p$ such that the operator
    \[\tilde{L}^*_\omega: C^{4,\alpha}(M) \rightarrow C^{0,\alpha}(M)\]
    \[\tilde{L}^*_\omega \phi := L_\omega^* \phi - \sum_{i = 1}^d\phi(q_i)f_i\]
    is an isomorphism.
\end{Lemma}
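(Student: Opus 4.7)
The plan is to observe that $\tilde L^*_\omega$ is Fredholm of index zero (so it is an isomorphism iff it is injective), reduce injectivity to a finite-dimensional evaluation condition on $\ker L^*_\omega$, and then choose the $q_i$'s to satisfy that condition.

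First, by \eqref{diff}, $L^*_\omega$ differs from $-\tfrac14\Delta^2_\omega$ only by lower order terms, so it is a fourth order elliptic operator with the same principal symbol as $L_\omega$. As a map $C^{4,\alpha}(M)\to C^{0,\alpha}(M)$ it is therefore Fredholm, and since $-\tfrac14\Delta^2_\omega$ is self-adjoint with index zero and the index is stable under lower order perturbations, $L^*_\omega$ has index zero. The correction $\phi\mapsto\sum_i\phi(q_i)f_i$ is a continuous finite rank map $C^{4,\alpha}(M)\to C^{0,\alpha}(M)$, so $\tilde L^*_\omega$ is likewise Fredholm of index zero and it suffices to prove injectivity. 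Also, by formal $L^2$-adjointness the cokernel of $L^*_\omega$ is $\ker L_\omega$, so $\dim\ker L^*_\omega = d$ as well.

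Next, suppose $\tilde L^*_\omega\phi=0$, i.e.\ $L^*_\omega\phi=\sum_i\phi(q_i)f_i$. Pairing both sides in $L^2$ with $f_j$ yields $\langle L^*_\omega\phi,f_j\rangle=\langle\phi,L_\omega f_j\rangle=0$ on the left (since $f_j\in\ker L_\omega$) and $\phi(q_j)$ on the right (by orthonormality of the $f_i$'s). Hence $\phi(q_j)=0$ for every $j$, and substituting back $L^*_\omega\phi=0$, so $\phi\in\ker L^*_\omega$. Therefore $\ker\tilde L^*_\omega=\{\phi\in\ker L^*_\omega:\phi(q_1)=\dots=\phi(q_d)=0\}$, and injectivity of $\tilde L^*_\omega$ is equivalent to the evaluation map
\[E:\ker L^*_\omega\to\IR^d,\qquad \phi\mapsto(\phi(q_1),\dots,\phi(q_d))\]
being injective, which since both sides are $d$-dimensional is the same as being an isomorphism.

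Finally, fix any basis $\xi_1,\dots,\xi_d$ of $\ker L^*_\omega$; by elliptic regularity each $\xi_i$ is smooth on $M$. Consider $\Xi:M_p\to\IR^d$, $q\mapsto(\xi_1(q),\dots,\xi_d(q))$. If its image were contained in a proper subspace of $\IR^d$, some nonzero $(a_1,\dots,a_d)$ would satisfy $\sum_i a_i\xi_i\equiv 0$ on $M_p$; density of $M_p$ in $M$ together with continuity then force $\sum_i a_i\xi_i\equiv 0$ on $M$, contradicting linear independence of the $\xi_i$. So the image of $\Xi$ spans $\IR^d$, and I can pick $q_1,\dots,q_d\in M_p$ whose images form a basis, making $E$ an isomorphism. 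The individual steps are routine; the only conceptual point, and the only thing that could have gone wrong, is that the perturbation is by the $f_i$'s spanning $\ker L_\omega$ rather than $\ker L^*_\omega$---this is precisely what allows the $L^2$-pairing with $f_j$ to collapse to $\phi(q_j)$ and reduce the kernel problem for $\tilde L^*_\omega$ to the evaluation condition on the (different but equal-dimensional) space $\ker L^*_\omega$.
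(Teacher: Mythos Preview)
Your proof is correct and follows the same overall strategy as the paper: reduce to injectivity via the index-zero property, identify $\ker\tilde L^*_\omega$ with the elements of $\ker L^*_\omega$ vanishing at all the $q_i$, and then choose the $q_i$ to force this set to be trivial. The only difference is in the last step: the paper builds up the points one at a time by an inductive construction (adjusting the basis of $\ker L^*_\omega$ along the way), whereas your span argument---the image of $\Xi$ must span $\mathbb R^d$, else some nontrivial combination of the $\xi_i$ vanishes on $M_p$ and hence on $M$---handles the choice in one shot and is a bit cleaner.
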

\begin{proof}
First, regardless of the choice of $q_1, ..., q_d$, if we have $\tilde{L}^*_\omega\phi = 0$, then we must have that $L_\omega^* \phi = 0$ since the $f_i$ are each orthogonal to the image of $L_\omega^*$. 

We claim that we can find $q_1, ..., q_d \in M_p$, and a basis $\phi_1, ..., \phi_d$ for $\ker(L_\omega^*)$ such that for $1 \leq i \leq d$ the vectors
\[v_i := (\phi_i(q_1), ..., \phi_i(q_d))\]
are linearly independent. If this were true, then suppose there was some nonzero $\phi$ in the kernel of $\tilde{L}^*_\omega$. Then $\phi$ is also in the kernel of $L_\omega^*$ so $\phi = \sum_{j = 1}^dc_j\phi_j$, and we would have
\[0 = \sum_{j = 1}^dc_j\phi_j(q_i)\]
for each $i$. This means there is a nonzero vector orthogonal to each $v_i^T$, which contradicts the linear independence of the $v_i$, so $\tilde{L}_\omega^*$ is injective. For surjectivity, $\tilde{L}_\omega^*$ differs from the index zero operator $L_{\omega}^*$ by an operator of finite rank, and so is also index zero. Then injectivity implies surjectivity.

Now we prove our claim. We do so inductively; first, if $\phi_1 \in \ker(L_\omega^*)$ is nonzero then of course we can find a point $q_1 \in M_p$ such that $\phi_1(q_1) \neq 0$. Now, suppose we have found points $q_1, ..., q_{d-1}$ and linear independent functions $\phi_1, ..., \phi_{d-1}$ in the kernel of $L_\omega^*$ such that for $1 \leq i \leq d - 1$ the vectors
\[v_i' := (\phi_i(q_1), ..., \phi_i(q_{d-1}))\]
are linearly independent. Let $\phi_d \in \ker(L_\omega^*)$ linearly independent of $\phi_1, ..., \phi_{d-1}$. Suppose there is $q_d \in M_p$ such that $\phi_d(q_d) \neq 0$ and $v_d$ is linearly dependent of $v_1, ..., v_{d-1}$ defined in the same way as above (they themselves are linearly independent due to the linear independence of the $v_i'$). Then we have that there exists constant $a_1, ..., a_{d-1}$ such that
\[\sum_{j = 1}^{d-1}a_j\phi_j(q_i) - \phi_d(q_i) = 0\> \text{ for } 1 \leq i \leq d,\> \sum_{j = 1}^{d-1}a_j\phi_j - \phi_d \not\equiv 0.\]
Then replacing $\phi_d$ with $\sum_{j = 1}^{d - 1}a_j\phi_j - \phi_d$ and replacing $q_d$ with a point where this function is nonzero yields the claim, since in that case $v_d$ would be equal to zero in its first $d - 1$ components.

If we could not find such a $q_d$, then $\phi_d(q_d) \neq 0$ implies that $v_1, ..., v_d$ are linearly independent as desired.
\end{proof}

From now on we fix $q_1, ..., q_d \in M_p$ to be a set of points for which the conclusion of the above lemma holds. When we blow up, we are pushed onto studying the operator
\[\tilde{L}^*_{\omega_\epsilon}: C^{4,\alpha}_\delta(Bl_pM) \rightarrow C^{0,\alpha}_\delta(Bl_pM)\]
\begin{equation}\label{Leptil}
\tilde{L}^*_{\omega_\epsilon} \phi := L^*_{\omega_\epsilon} \phi - \sum_{i = 1}^d\phi(q_i)f_i.
\end{equation}
Due to the geometry of $\omega_\epsilon$, on $M_p$ this operator behaves like $\tilde{L}^*_\omega$ restricted to $M_p$ so we need:

\begin{Proposition}\label{omegalin}
    Consider for now $\tilde{L}^*_\omega$ as an operator $\tilde{L}^*_\omega: C_\delta^{4,\alpha}(M_p) \rightarrow C^{0,\alpha}_{\delta - 4}(M_p)$.
    \begin{itemize}
        \item If $\delta > 4 - 2n$, then $\tilde{L}^*_{\omega}$ has trivial kernel.
        \item If $n = 2$, fix $\delta \in (-1,0)$. Then if $\phi \in C_\delta^{4,\alpha}(M_p)$ is in the kernel of $\tilde{L}^*_\omega$, in $B_1$ we have
        \[\phi = a\log|z| + b + \theta\]
        for some $a,b \in \mathbb R$ and $\theta \in C_{\delta + 2}^{4,\alpha}(M_p)$.
    \end{itemize}
\end{Proposition}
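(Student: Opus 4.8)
The plan is to analyze the operator $\tilde L^*_\omega$ near the puncture $p$ by treating it as a fourth-order elliptic operator whose leading part is the Euclidean bilaplacian, and to use the mapping properties recorded in Proposition~\ref{harm} together with a bootstrap across indicial roots. First I would observe that a function $\phi$ in the kernel of $\tilde L^*_\omega$ on $M_p$ is automatically in the kernel of $L^*_\omega$ on $M_p$, since the correction terms $\sum_i \phi(q_i)f_i$ are supported at interior points $q_i \in M_p$ and are orthogonal to the image of $L^*_\omega$ — more precisely, away from the $q_i$ the equation is simply $L^*_\omega\phi = 0$, and one checks directly that for $\phi$ decaying at $p$ the full equation forces all $\phi(q_i)=0$ by pairing against the $f_i$ (using that on a punctured neighborhood a decaying solution extends, or a removable-singularity argument; in any case the point is that the finite-rank perturbation does not affect the asymptotic analysis near $p$). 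So we reduce to understanding decaying-rate solutions of $L^*_\omega\phi = 0$ near $p$.

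For the first bullet, when $\delta > 4-2n$, I would use the Fredholm/isomorphism theory for $\tilde L^*_\omega$ (or $L^*_\omega$) on the weighted spaces $C^{k,\alpha}_\delta(M_p)$: the operator $\tilde L^*_\omega: C^{4,\alpha}_\delta(M) \to C^{0,\alpha}(M)$ on the compact manifold $M$ is an isomorphism by Lemma~\ref{Minv}, and removing the point $p$ costs nothing as long as the weight $\delta$ avoids indicial roots and lies in the range where there are no new obstructions — concretely, $L^*_\omega$ has the same indicial roots as $\Delta^2_{euc}$ at $p$, namely the integers outside $(4-n,0)$ shifted appropriately, and for $\delta > 4-2n$ a solution decaying like $|z|^\delta$ can be shown to actually extend to a global solution on $M$ by iterating Proposition~\ref{harm}: the leading operator being $-\frac14\Delta^2_{euc}$ plus terms that are lower order in the weighted sense (the Ricci and scalar-curvature terms gain decay because $\varphi = O(|z|^4)$ in normal coordinates), one improves the decay rate of $\phi$ step by step until $\phi$ lies in $C^{4,\alpha}_\delta(M)$ with $\delta$ large enough to conclude $\phi \in \ker \tilde L^*_\omega$ on $M$, hence $\phi \equiv 0$. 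I would be careful to check that the threshold $4-2n$ is exactly the value below the first relevant indicial root so that the argument closes.

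For the second bullet ($n=2$, $\delta \in (-1,0)$), the idea is the same bootstrap but now $\delta$ sits between two indicial roots of $\Delta^2_{euc}$ in dimension $4$: near $|z|\to 0$ the relevant homogeneous harmonic-type solutions are the constants (degree $0$), $\log|z|$ (the degree-$0$ "resonant" solution of the bilaplacian in real dimension $4$, i.e. $\Delta_{euc}\log|z|$ is $|z|^{-2}$ which the second Laplacian kills after producing the fundamental-solution behavior), and then the next rates are strictly positive. So a solution $\phi = O(|z|^\delta)$ with $\delta\in(-1,0)$ of $L^*_\omega\phi=0$ must, after peeling off the Euclidean model, be asymptotic to $a\log|z| + b$ plus a remainder of strictly better rate; applying Proposition~\ref{harm} to the remainder and iterating — each time absorbing the error terms coming from $(L^*_\omega - \Delta^2_{euc}/(-4))$, which by Lemma~\ref{PertEst}-type estimates and $\varphi=O(|z|^4)$ map $C^{4,\alpha}_\mu$ into something with weight $> \mu - 4 + 2$ — pushes the remainder $\theta$ into $C^{4,\alpha}_{\delta+2}(M_p)$, which is the claimed gain of two derivatives worth of decay. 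The precise amount of gain, $+2$, should be exactly what one extra indicial-root spacing buys, and matching it to the error-term decay is where I would need to be most careful.

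The main obstacle I anticipate is bookkeeping the interaction between the finite-rank correction term and the asymptotic analysis: one must be sure that the points $q_i$ being in $M_p$ (not near $p$) really does decouple the two issues, and that the removable-singularity / extension step is valid for the fourth-order operator $L^*_\omega$ with its first-order lower-order terms (which do \emph{not} decay in the ordinary sense, only in the weighted sense because $S(\omega)$ and its derivatives are bounded but $\partial S(\omega)\cdot\overline\partial\phi$ contributes at a controlled weight). A secondary technical point is verifying that $\delta = 4-2n$ and the interval $(-1,0)$ are positioned correctly relative to the indicial roots of the bilaplacian in the respective dimensions, so that no unexpected homogeneous solution (e.g. a $|z|^{4-n}$-type term) sneaks into the expansion.
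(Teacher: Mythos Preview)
Your proposal is essentially correct and follows the same approach as the paper: compare $\tilde L^*_\omega$ to $-\tfrac14\Delta^2_{euc}$ near $p$, observe that the difference gains two orders of weight (because $g_{j\bar k}=\delta_{j\bar k}+O(|z|^2)$ in normal coordinates), apply Proposition~\ref{harm} to invert $\Delta^2_{euc}$, and identify the biharmonic piece in weight $\delta\in(-1,0)$ as $a\log|z|+b$.

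One small comment: your opening reduction ``$\tilde L^*_\omega\phi=0 \Rightarrow L^*_\omega\phi=0$ on $M_p$'' is not needed and not quite justified as stated --- the orthogonality-to-the-image argument uses integration by parts on the closed manifold $M$, and on $M_p$ you would pick up boundary terms at $p$ which, for the second bullet, could be nonzero precisely because of the $\log|z|$ term you are trying to detect. You correctly sense this (``in any case the point is that the finite-rank perturbation does not affect the asymptotic analysis near $p$''), and indeed the paper simply works with $\tilde L^*_\omega\phi=0$ directly: the terms $\sum_i\phi(q_i)f_i$ are smooth near $p$, hence lie in $C^{0,\alpha}_{\delta-2}$ for $\delta<2$, so they are absorbed into the same error as the curvature terms and do not interfere with the bootstrap. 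Dropping the reduction step cleans up your argument with no loss.
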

\begin{proof}
The proof of the first claim is exactly the same as what appears in \cite{AP1}, or \cite{SzB}. For the second claim, first note that since near $p$ we can write $\omega_{j\overline{k}} = \delta_{j\overline{k}} + O(|z|^2)$ as in \eqref{normal}, if $f \in C^{k,\alpha}_\delta(M_p)$ we have
\begin{equation}\label{deltaminus2}
\tilde{L}^*_\omega(f) + \frac{1}{4}\Delta^2_{euc}(f) \in C^{k-4,\alpha}_{\delta - 2}(M_p)
\end{equation}
as long as $\delta < 2$ (otherwise we have to account for the zeroth order terms in $\tilde{L}_\omega^*$). We would usually only expect $C^{k-4,\alpha}_{\delta - 4}(M_p)$ since these are fourth order operators.

Now consider $\phi \in C^{4,\alpha}_\delta(M_p)$ such that $\tilde{L}_\omega^* \phi = 0$. If we let $\chi$ be a cutoff supported in $B_1$ that is equal to $1$ in $B_{1/2}$, then from \eqref{deltaminus2}
\[\Delta^2_{euc}(\chi \phi) \in C^{0,\alpha}_{\delta - 2}(\mathbb C^2\setminus \{0\}).\]
From Proposition~\ref{harm} there is $\theta \in C^{4,\alpha}_{\delta + 2}(\mathbb C^2\setminus \{0\})$ such that $\theta - \chi \phi$ is biharmonic and in $C^{4,\alpha}_\delta(\mathbb C^2 \setminus \{0\})$. The only biharmonic functions with $\delta \in (-1,0)$ are multiples of $\log|z|$ and constants, and thus there are constants $a,b \in \mathbb R$ such that
\[\chi \phi = a\log|z| + b + \theta.\]
\end{proof}

We must also study $L_\eta$ since near the exceptional divisor $L^*_{\omega_\epsilon} = L^*_{\epsilon^2\eta} = L_{\epsilon^2\eta}$, where the last equality, once again, is due to the fact that $\eta$ is cscK.

\begin{Proposition}\label{etalin}
    If $\delta < 0$, the operator
    \[L_{\eta}: C_\delta^{4,\alpha}(Bl_0\mathbb C^n) \rightarrow C_{\delta - 4}^{0,\alpha}(Bl_0\mathbb C^n)\]
    has trivial kernel.
\end{Proposition}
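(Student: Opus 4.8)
The plan is to show that any $\phi \in C^{4,\alpha}_\delta(Bl_0\mathbb C^n)$ with $\delta < 0$ satisfying $L_\eta \phi = 0$ must vanish, using the self-adjointness of $L_\eta$ (since $\eta$ is cscK), elliptic decay estimates, and an integration by parts argument. First I would upgrade the decay of $\phi$: because $\eta$ is asymptotically Euclidean and scalar flat, $L_\eta$ differs from $\tfrac14 \Delta^2_{euc}$ by terms that gain decay at infinity (the metric coefficients of $\eta$ differ from Euclidean by $O(|w|^{2-2n})$ when $n\ge 3$, and the curvature decays even faster), so $L_\eta \phi = 0$ forces $\Delta^2_{euc}(\gamma\phi) \in C^{0,\alpha}_{\delta'-4}$ for some $\delta' < \delta$, at least down to the next indicial root. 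Iterating Proposition~\ref{harm} (being careful to avoid the indicial roots $0, -1, \dots, 2-n$, which lie outside $(2-n,0)$ and hence are permissible jump targets only in the sense that we can push $\phi$ to decay like $O(|w|^{2-n})$, the slowest decay rate of a nonconstant harmonic function), we find that $\phi$ together with its derivatives decays at a definite polynomial rate; in particular $\phi \to 0$ at infinity and $|\nabla_\eta \phi|, |\nabla^2_\eta \phi|$ are integrable against the volume form of $\eta$ near infinity, and near the exceptional divisor $\phi$ is smooth so there is no boundary contribution there.

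Next, since $L_\eta = L_\eta^*$, I would write the $L^2(\eta)$ pairing $\int_{Bl_0\mathbb C^n} \phi \, L_\eta \phi \, \eta^n = 0$ and integrate by parts. The standard identity for the linearized cscK operator at a cscK metric is $L_\eta \phi = -\tfrac14 \mathcal{D}^*\mathcal{D}\phi$ where $\mathcal{D}\phi = \bar\partial \nabla^{1,0}\phi$ is the Lichnerowicz operator, so $\int \phi\, L_\eta\phi\, \eta^n = -\tfrac14 \int |\mathcal{D}\phi|^2 \eta^n$ modulo boundary terms on $\tilde B_R$ that vanish as $R \to \infty$ by the decay established in the first step. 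Hence $\mathcal{D}\phi = 0$, i.e. $\nabla^{1,0}\phi$ is a holomorphic vector field on $Bl_0\mathbb C^n$. The conclusion then follows by showing this forces $\phi$ constant, hence zero since $\phi \to 0$: the gradient vector field $\nabla^{1,0}\phi$ is holomorphic and decays at infinity (as $\phi$ and its derivatives decay), but on $Bl_0\mathbb C^n$, which is $\mathbb C^n$ blown up at a point, any holomorphic vector field is the lift of one on $\mathbb C^n$; a holomorphic vector field on $\mathbb C^n$ decaying at infinity must be identically zero (its components are entire functions tending to $0$). Therefore $d\phi = 0$, so $\phi$ is constant, and the decay forces $\phi \equiv 0$.

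I expect the main obstacle to be the decay bootstrapping in the first step: one must track exactly how much decay $L_\eta \phi = 0$ buys, making sure the intermediate weights stay clear of the indicial roots of $\Delta^2_{euc}$, and confirm that the resulting decay of $\phi$, $\nabla\phi$, $\nabla^2\phi$ is strong enough that the boundary integrals in Green's identity for the fourth-order operator $L_\eta$ (which involve $\phi$, $\nabla\phi$, $\nabla^2\phi$, $\nabla^3\phi$ paired across the sphere $\partial\tilde B_R$ of area $\sim R^{2n-1}$) genuinely tend to zero. A secondary subtlety, specific to comparing with the $n=2$ case, is that for $n\ge 3$ the relevant harmonic functions decay like $|w|^{2-n}$ which is already enough, whereas the borderline behavior only becomes an issue in dimension two (handled separately in Section 5); since the proposition is stated for general $n$ with $\delta < 0$, I would note that the argument is cleanest for $n \ge 3$ and that for $n=2$ the same conclusion holds because $\log|w|$ and constants are ruled out by $\delta < 0$, leaving only genuinely decaying biharmonic pieces to feed into the integration by parts.
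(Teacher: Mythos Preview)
The paper does not supply its own proof of this proposition; it simply cites \cite{AP1} and \cite{SzB}. Your outline is precisely the standard argument one finds in those references: exploit that $\eta$ is scalar flat so that $L_\eta$ agrees with (a multiple of) the Lichnerowicz operator $\mathcal D^*\mathcal D$, bootstrap decay using the asymptotic flatness, integrate by parts to conclude $\mathcal D\phi=0$, and then kill the resulting decaying holomorphic vector field on $Bl_0\mathbb C^n$ by Hartogs/Liouville. So your approach is correct and coincides with what the cited literature does.

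One small point worth tightening: in your bootstrap paragraph you describe pushing the decay ``down to the next indicial root'' and mention the range $(2-n,0)$, but since we are on $\mathbb C^n$ (real dimension $2n$) the relevant gap for $\Delta^2_{euc}$ is $(4-2n,0)$; for $n\ge 3$ this already gives $\phi=O(|w|^{4-2n+\epsilon})$ after the first iteration, which is more than enough to make the boundary terms on $\partial\tilde B_R$ vanish. For $n=2$ there is no such gap (every integer is indicial and $\log|w|$ appears), but as you note, $\delta<0$ already rules out the constant and logarithmic pieces, and one step of improvement by $|w|^{-2}$ (the metric defect of the Burns metric) lands you in a weight where the $\partial\tilde B_R$ terms decay. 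Making that explicit would remove the only genuine ambiguity in your write-up.
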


The proof of the above appears for example in \cite{AP1} and \cite{SzB}. Finally, to prove the desired estimates on $\tilde{L}^*_{\omega_\epsilon}$ we need Schauder estimates which are independent of $\epsilon$.

\begin{Lemma}\label{Schauder}
    Fix $\delta < 0$. There exists a positive constant $C$ independent of all $\epsilon > 0$ small such that for all $\phi \in C^{4,\alpha}_\delta(Bl_pM)$
    \[||\phi||_{C^{4,\alpha}_\delta(Bl_pM)} \leq C(||\phi||_{C^0_\delta(Bl_pM)} + ||\tilde{L}^*_{\omega_\epsilon}\phi||_{C^{0,\alpha}_{\delta - 4}(Bl_pM)}).\]
\end{Lemma}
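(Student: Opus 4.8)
The plan is to bound separately the three pieces of $\|\phi\|_{C^{4,\alpha}_\delta(Bl_pM)}$ --- $\|\phi\|_{C^{4,\alpha}_\omega(M\setminus B_1)}$, $\sup_{\epsilon<r<1/2}\|\phi_r\|_{C^{4,\alpha}(B_2\setminus B_1)}$, and $\|\phi_\epsilon\|_{C^{4,\alpha}_\eta(\tilde B_1)}$ --- each by $C\big(\|\phi\|_{C^0_\delta(Bl_pM)}+\|\tilde L^*_{\omega_\epsilon}\phi\|_{C^{0,\alpha}_{\delta-4}(Bl_pM)}\big)$ with $C$ independent of $\epsilon$, and then to sum. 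Each bound will come from the standard interior Schauder estimate for a fourth order elliptic operator, applied after the appropriate rescaling. On $M\setminus B_{1/2}$ we have $\omega_\epsilon\equiv\omega$ once $\epsilon$ is small (as $\omega_\epsilon\equiv\omega$ off $B_{2r_\epsilon}$ and $r_\epsilon\to 0$), so for the outer piece I would invoke the $\epsilon$-independent interior estimate for $L^*_\omega$ on $M\setminus B_1\Subset M\setminus B_{1/2}$, using $L^*_\omega\phi=\tilde L^*_{\omega_\epsilon}\phi+\sum_i\phi(q_i)f_i$ and $|\phi(q_i)|\le C\|\phi\|_{C^0_\delta}$ (the weight at the fixed points $q_i$ is bounded once $\epsilon<|z(q_i)|$). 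For the middle piece, fix $r\in(\epsilon,1/2)$, pull back by the dilation $z\mapsto rz$ and scale the metric by $r^{-2}$: with $\tilde g_r:=r^{-2}(z\mapsto rz)^*g_\epsilon$ on $B_{5/2}\setminus B_{1/2}$, naturality of $L^*$ under biholomorphisms and its scale covariance $L^*_{\lambda^2 g}=\lambda^{-4}L^*_g$ (clear from the formula for $L^*_\omega$, every term of which is homogeneous of weight $-4$) give $\big(L^*_{\tilde g_r}\phi_r\big)(z')=r^{4-\delta}\big(\tilde L^*_{\omega_\epsilon}\phi+\sum_i\phi(q_i)f_i\big)(rz')$. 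For the inner piece I would do the same with $z=\epsilon w$, setting $\hat g_\epsilon:=\epsilon^{-2}(z\mapsto\epsilon w)^*g_\epsilon$ on $\tilde B_2\subset Bl_0\mathbb C^n$; here in fact $\hat g_\epsilon\equiv\eta$ on $\tilde B_{r_\epsilon/\epsilon}\supseteq\tilde B_2$ for $\epsilon$ small, so the $\epsilon$-independent interior estimate for $L^*_\eta=L_\eta$ on $\tilde B_1\Subset\tilde B_2$ applies verbatim.

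The one point that needs real work is that the interior Schauder constant for $L^*_{\tilde g_r}$ on $B_{5/2}\setminus B_{1/2}$ is uniform in $\epsilon$ and in $r\in(\epsilon,1/2)$; this reduces to showing that, in the coordinate $z'$, the metrics $\tilde g_r$ are uniformly bounded in $C^{m,\alpha}$ and uniformly positive definite on $B_{5/2}\setminus B_{1/2}$. The potential of $\tilde g_r$ is $|z'|^2+r^{-2}\gamma_1(rz')\varphi(rz')+r^{-2}\epsilon^2\gamma_2(rz')\psi(\epsilon^{-1}rz')$ (with $\log|\epsilon^{-1}rz'|$ in place of $\psi$ when $n=2$); from $\varphi=O(|z|^4)$ and the decay $\psi=O(|w|^{4-2n})$ together with its derivative bounds, the two correction terms have $C^{m,\alpha}(B_{5/2}\setminus B_{1/2})$ norm $O(r^2)$ and $O((\epsilon/r)^{2n-2})$ respectively --- and $O((\epsilon/r)^2\log(r/\epsilon))$ when $n=2$, which is bounded since $t^{-2}\log t$ is bounded on $t>1$ --- hence uniformly bounded; uniform positive definiteness holds because $\tilde g_r$ is, up to the bounded factor $r^{-2}$, the genuine Kähler metric $g_\epsilon$ expressed in the rescaled coordinate $z'=z/r$, and the uniform two-sided bound on $g_\epsilon$ in the weighted $C^m_0$ norm is built into the construction (cf.\ \eqref{gep}). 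It is cleanest to split at $r=2\epsilon$: for $r\ge 2\epsilon$ one has $|\epsilon^{-1}rz'|\ge 1$ on the relevant annulus so the decay estimates apply as stated, while for $\epsilon<r<2\epsilon$ the annulus $B_{2r}\setminus B_r$ sits inside $\tilde B_4\setminus\tilde B_1\subset Bl_0\mathbb C^n$ with $\omega_\epsilon=\epsilon^2\eta$, and the estimate is absorbed into the inner-piece analysis. This is one place the choice $r_\epsilon=\epsilon^\beta$ with $\beta<1$ is used: on the support of the derivatives of $\gamma_1(rz')$ one has $r/\epsilon^\beta=O(1)$, so differentiating the cutoff does not spoil the $C^{m,\alpha}$ bound.

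Granting the uniform interior estimate, the rest is bookkeeping with the definitions of the weighted norms. For the middle piece, interior Schauder on $B_{5/2}\setminus B_{1/2}$ gives $\|\phi_r\|_{C^{4,\alpha}(B_2\setminus B_1)}\le C\big(\|\phi_r\|_{C^0(B_{5/2}\setminus B_{1/2})}+\|L^*_{\tilde g_r}\phi_r\|_{C^{0,\alpha}(B_{5/2}\setminus B_{1/2})}\big)$, and one checks directly from the definitions that $\|\phi_r\|_{C^0(B_{5/2}\setminus B_{1/2})}\le C\|\phi\|_{C^0_\delta}$ and $\|r^{4-\delta}(\tilde L^*_{\omega_\epsilon}\phi)(r\,\cdot)\|_{C^{0,\alpha}(B_{5/2}\setminus B_{1/2})}\le C\|\tilde L^*_{\omega_\epsilon}\phi\|_{C^{0,\alpha}_{\delta-4}}$ --- the only subtlety being that the enlarged annulus $B_{5/2}\setminus B_{1/2}$ reaches at most into the neighbouring dyadic scale (or into $\{|z|\le\epsilon\}$ or $\{|z|\ge 1\}$), each still controlled by the same weighted norm with a fixed constant since $\delta<0$ --- while the leftover term $r^{4-\delta}(\sum_i\phi(q_i)f_i)(r\,\cdot)$ is $\le C\|\phi\|_{C^0_\delta}$ because $r^{4-\delta}\le 1$ and the $f_i$ are fixed. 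Taking $\sup_r$ and adding the outer and inner estimates proves the lemma. (Equivalently, one may run a blow-up/contradiction argument: if the estimate failed with constants $\to\infty$, rescale about the point and scale at which the failure concentrates and pass to a limit in the kernel of one of the three model operators --- $L^*_\omega$ on $M_p$, $\Delta^2_{euc}$ on $\mathbb R^{2n}\setminus\{0\}$, or $L_\eta$ on $Bl_0\mathbb C^n$ --- contradicting the corresponding fixed interior estimate.)

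The step I expect to be the main obstacle is the uniform interior Schauder estimate of the second paragraph: controlling the rescaled metrics $\tilde g_r$ uniformly through the gluing region $|z|\sim r_\epsilon$, which amounts to carefully exploiting the explicit form of $\omega_\epsilon$ and the decay of $\varphi$ and $\psi$, and in particular, when $n=2$, handling the logarithmic potential and checking that $(\epsilon/r)^2\log(r/\epsilon)$ and the cutoff derivatives remain bounded after rescaling. The rest is routine.
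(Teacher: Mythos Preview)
Your argument is correct, but it takes a different route from the paper's. The paper does not split into the three pieces of the weighted norm and track the explicit K\"ahler potential through the gluing region; instead it argues via bounded geometry. It introduces the weight function $\rho$, observes that the curvature of $\omega_\epsilon$ is uniformly bounded in $C^{k,\alpha}_{-2}$ (this is \eqref{gep}) and hence the injectivity radius at $x$ is $\gtrsim\rho(x)$ uniformly in $\epsilon$, and then works in \emph{Riemannian} normal coordinates of radius $r\rho(x)$ at each point. In these coordinates the rescaled metric $(r\rho(x))^{-2}\omega_\epsilon$ is $C^{k,\alpha}$-close to Euclidean uniformly in $x$ and $\epsilon$, so $L^*_{(r\rho(x))^{-2}\omega_\epsilon}$ is uniformly close to $-\tfrac14\Delta^2_{euc}$, and the standard Euclidean Schauder estimate transfers back with a constant independent of the basepoint and of $\epsilon$; multiplying by $\rho(x)^{-\delta}$ and taking the sup yields the weighted estimate. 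The passage from $L^*_{\omega_\epsilon}$ to $\tilde L^*_{\omega_\epsilon}$ is then handled exactly as you do, using $|\phi(q_i)|\le C\|\phi\|_{C^0_\delta}$ and the uniform bound on $\|f_i\|_{C^{0,\alpha}_{\delta-4}}$ for $\delta<0$. Your approach --- rescaling in the fixed holomorphic coordinate and bounding the rescaled potential directly --- is more explicit and avoids invoking an injectivity radius estimate, at the cost of the case analysis you describe in the gluing annulus (the $O(r^2)$, $O((\epsilon/r)^{2n-2})$, and $O((\epsilon/r)^2\log(r/\epsilon))$ bounds, and the split at $r=2\epsilon$). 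The paper's approach is more portable: it uses only the scale-invariant curvature bound and would apply verbatim to any family of metrics with the same bounded-geometry properties, without reference to the specific form of $\varphi$, $\psi$, or the cutoff.
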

\begin{proof}
    First, we define a function $\rho: Bl_pM \rightarrow \mathbb R$ which describes the weights in the norms on $Bl_pM$,
    \[ \rho(z) = \begin{cases} 
      1 & |z| \geq 1 \\
      |z| & \epsilon \leq |z| \leq 1\\
      \epsilon & |z| \leq \epsilon
   \end{cases}
    .\]
    Now, $\eta$ is asymptotically Euclidean, $M$ is compact, and the sectional curvatures of $\omega_\epsilon$ satisfy a uniform bound in the $C^{k,\alpha}_{-2}(Bl_pM)$ norm \eqref{gep}, and thus we have that there exists some $r > 0$ independent of $\epsilon$ such that the injectivity radius of $\omega_\epsilon$ is bounded below by $r\rho(z)$. This implies that for any $\kappa > 0$ one can find fix an $r$ small enough independent of $\epsilon$ such that at each point $x \in Bl_pM$, in the normal coordinates of radius $r\rho(x)$ with respect to $\omega_\epsilon$ centered at $x$ we have
    \begin{equation}\label{holder_comp}\frac{1}{1 + \kappa}||\cdot||_{C^{k,\alpha}_{(r\rho(x))^{-2}\omega_\epsilon}} \leq ||\cdot||_{C^{k,\alpha}} \leq (1 + \kappa)||\cdot||_{C^{k,\alpha}_{(r\rho(x))^{-2}\omega_\epsilon}},
    \end{equation}
    where the middle norm is measured with respect to the Euclidean metric in the normal coordinates. Since $L^*_{\omega_{euc}} = -\frac{1}{4}\Delta^2_{euc}$, after possibly taking $r$ even smaller we further have for any $\phi$
    \[\frac{1}{1 + \kappa}||L^*_{(r\rho(x))^{-2}\omega_\epsilon}\phi||_{C^{k-4,\alpha}} \leq ||\frac{1}{4}\Delta^2_{euc}\phi||_{C^{k-4,\alpha}} \leq (1 + \kappa)||L^*_{(r\rho(x))^{-2}\omega_\epsilon}\phi||_{C^{k-4,\alpha}},\]
    again in the normal coordinate ball, and these norms are measured in the Euclidean norm with respect to the normal coordinates. Now, applying the usual Schauder estimates for the Euclidean bilaplacian, there is a constant $D > 0$ such that in any of these normal coordinate balls
    \[||\phi||_{C^{k,\alpha}} \leq D(||\phi||_{C^0} + ||\frac{1}{4}\Delta^2_{euc}\phi||_{C^{k-4,\alpha}}).\]
    Due to the comparisons of the Euclidean H\"older norms with the H\"older norms for $\omega_\epsilon$ \eqref{holder_comp} we then have
    \[||\phi||_{C^{k,\alpha}_{(r\rho(x))^{-2}\omega_\epsilon}} \leq D(1 + \kappa)^2(||\phi||_{C^0} + ||L^*_{(r\rho(x))^{-2}\omega_\epsilon}\phi||_{C^{k-4,\alpha}}).\]
    $D(1 + \kappa)^2$ has no dependence on $\epsilon$. Noting that $L^*_{(r\rho(x))^{-2}\omega_\epsilon} = (r\rho(x))^4L^*_{\omega_\epsilon}$ and applying this estimate to $\rho(x)^{-\delta}\phi$ over all of these normal coordinate balls we have a $C > 0$ independent of $\epsilon$ such that
    \[||\phi||_{C^{4,\alpha}_\delta(Bl_pM)} \leq C(||\phi||_{C^0_\delta(Bl_pM)} + ||L^*_{\omega_\epsilon}\phi||_{C^{0,\alpha}_{\delta - 4}(Bl_pM)}).\]
    Now to get the estimate for $\tilde{L}^*_{\omega_\epsilon}$, note that there is $c > 0$ independent of $\epsilon$ such that
    \[\left |||\tilde{L}^*_{\omega_\epsilon}\phi||_{C^{0,\alpha}_{\delta - 4}(Bl_pM)} - ||L^*_{\omega_\epsilon}\phi||_{C^{0,\alpha}_{\delta - 4}(Bl_pM)}\right| \leq c||\phi||_{C^0_\delta(Bl_pM)}\max_i||f_i||_{C^{0,\alpha}_{\delta - 4}(Bl_pM)}.\]
    The result then follows from the fact that the norms $||f_i||_{C^{0,\alpha}_{\delta - 4}(Bl_pM)}$ are uniformly bounded from above in $\epsilon$ when $\delta < 0$.
\end{proof}

We are now ready to prove the main estimate.

\begin{Proposition}\label{invprop}
    Fix $\delta \in (-1,0)$ sufficiently small. Consider the operator
    \[\tilde{L}_{\omega_\epsilon}^*: C_\delta^{4,\alpha}(Bl_pM) \rightarrow C^{0,\alpha}_{\delta - 4}(Bl_pM).\]
    \begin{itemize}
        \item Suppose $n > 2$. For $\epsilon > 0$ small there exists a constant $K > 0$ independent of $\epsilon$ such that for all $\phi \in C_{\delta}^{4,\alpha}(Bl_pM)$,
        \[||\phi||_{C_{\delta}^{4,\alpha}(Bl_pM)} \leq K||\tilde{L}^*_{\omega_\epsilon}\phi||_{C_{\delta - 4}^{0,\alpha}(Bl_pM)}.\]
        \item Suppose $n = 2$. For $\epsilon > 0$ small there exists a constant $K > 0$ independent of $\epsilon$ such that for all $\phi \in C_{\delta}^{4,\alpha}(Bl_pM)$,
        \[||\phi||_{C_{\delta}^{4,\alpha}(Bl_pM)} \leq K\epsilon^\delta||\tilde{L}^*_{\omega_\epsilon}\phi||_{C_{\delta - 4}^{0,\alpha}(Bl_pM)}.\]
    \end{itemize}
\end{Proposition}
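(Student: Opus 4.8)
The plan is to argue by contradiction, following the blowup/rescaling scheme of Arezzo--Pacard and Sz\'ekelyhidi. Suppose the estimate fails. Then there is a sequence $\epsilon_j \to 0$ and functions $\phi_j \in C^{4,\alpha}_\delta(Bl_pM)$, normalized so that (in the $n>2$ case) $\|\phi_j\|_{C^{4,\alpha}_\delta} = 1$ while $\|\tilde L^*_{\omega_{\epsilon_j}}\phi_j\|_{C^{0,\alpha}_{\delta-4}} \to 0$; in the $n=2$ case the natural normalization to use is $\|\phi_j\|_{C^{4,\alpha}_\delta} = 1$ with $\epsilon_j^{\delta}\|\tilde L^*_{\omega_{\epsilon_j}}\phi_j\|_{C^{0,\alpha}_{\delta-4}} \to 0$, i.e. $\|\tilde L^*_{\omega_{\epsilon_j}}\phi_j\|_{C^{0,\alpha}_{\delta-4}} = o(\epsilon_j^{-\delta})$. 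By Lemma~\ref{Schauder}, the $C^{4,\alpha}_\delta$ bound reduces to an $L^\infty$-type bound: it suffices to get a contradiction from $\|\phi_j\|_{C^0_\delta} \gtrsim 1$. The strategy is to extract, after passing to subsequences, limits of the rescaled functions on the two natural pieces of $Bl_pM$ -- the punctured manifold $M_p$ and the model space $Bl_0\mathbb C^n$ -- and show each limit must vanish by the injectivity/structure statements in Propositions~\ref{omegalin} and \ref{etalin}, contradicting that the sup of $\rho^{-\delta}|\phi_j|$ stays bounded below.

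More precisely, I would proceed in steps. \textbf{Step 1 (outer limit on $M_p$).} On any fixed compact subset of $M_p$, $\omega_{\epsilon_j} \to \omega$ and the operators converge, so by Schauder and Arzel\`a--Ascoli a subsequence of $\phi_j$ converges in $C^{4}_{loc}$ to some $\phi_\infty$ on $M_p$ with $\tilde L^*_\omega \phi_\infty = 0$ and $\|\phi_\infty\|_{C^{4,\alpha}_\delta(M_p)} \le 1$. When $n>2$ and $\delta > 4-2n$, Proposition~\ref{omegalin} forces $\phi_\infty \equiv 0$. When $n=2$ and $\delta \in (-1,0)$, Proposition~\ref{omegalin} instead gives $\phi_\infty = a\log|z| + b + \theta$ near $p$; the key point here is that the weight makes the $\epsilon_j^{\delta}$ rescaling (equivalently, the sup over $\epsilon_j < r < 1/2$ in the middle term of the norm) see the $\log$ and constant pieces, and one uses that $(L_\omega^*)^* = L_\omega$ kills constants -- this is the role flagged in the excerpt for ``step 1.5'' -- to conclude $b = 0$ after an integration-by-parts/pairing argument, and a separate argument on the scales $|z|\sim r$ to kill $a$ and $\theta$. \textbf{Step 2 (inner limit on $Bl_0\mathbb C^n$).} Rescale by setting $\hat\phi_j(w) = \epsilon_j^{-\delta}\phi_j(\epsilon_j w)$ on $\tilde B_{1/\epsilon_j}$. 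Since near the exceptional divisor $\omega_{\epsilon_j} = \epsilon_j^2\eta$ and $L^*_{\epsilon_j^2\eta} = \epsilon_j^{-4}\cdot\epsilon_j^{4}\,(\text{scaling})\cdots$, the equation becomes (up to the lower-order sum $\sum \phi_j(q_i) f_i$, which is supported away from the gluing region and whose coefficients $\phi_j(q_i)$ are controlled) an approximate $L_\eta$-equation; a subsequence converges in $C^4_{loc}(Bl_0\mathbb C^n)$ to $\hat\phi_\infty$ with $L_\eta \hat\phi_\infty = 0$ and $\|\hat\phi_\infty\|_{C^{4,\alpha}_\delta(Bl_0\mathbb C^n)}\le 1$. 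Since $\delta < 0$, Proposition~\ref{etalin} gives $\hat\phi_\infty \equiv 0$. \textbf{Step 3 (matching / no escape of mass).} One must rule out that the $C^0_\delta$-mass of $\phi_j$ escapes into the gluing annulus $\{|z|\sim r_{\epsilon_j}\}$ -- the region interpolating between the two models. Here one uses a maximum-principle/barrier argument for the bilaplacian-dominated operator on the annulus $\{\epsilon_j \le |z| \le 1\}$: the weight $|z|^\delta$ with $\delta \in (-1,0)$ (resp. $\delta \in (4-2n,0)$) is chosen precisely to be sub/super-solution-friendly and not an indicial root (Proposition~\ref{harm}), so a bound on $\phi_j$ at the two ends of the annulus, plus the smallness of $\tilde L^*_{\omega_{\epsilon_j}}\phi_j$, propagates to a bound in the interior; combined with Steps 1--2 this shows $\sup \rho^{-\delta}|\phi_j| \to 0$, the desired contradiction. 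The $\epsilon_j^\delta$ factor in the $n=2$ statement is exactly the loss one incurs because in that case $\phi_\infty$ need not vanish on $M_p$ but only has a controlled $\log$-type singularity, so the normalization that survives is the weaker one.

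The main obstacle I expect is \textbf{Step 1 in the case $n=2$}: proving that the outer limit $\phi_\infty$, which a priori may carry a nonzero $a\log|z| + b$ term, is forced to be zero (or sufficiently mild) by the normalization. Unlike the extremal-metrics literature where one works with $L_\omega$ and can appeal directly to the absence of small kernel, here we are working with $L^*_\omega$, whose kernel is not understood when $\omega$ is not cscK; the saving grace -- and the delicate part of the argument -- is that $L_\omega$, the adjoint of the operator we invert, always contains the constants, and one leverages this together with the structure result of Proposition~\ref{omegalin} and a careful accounting of which scales the weighted norm ``sees'' to extract the contradiction. Making the bookkeeping of the three norm pieces (on $M\setminus B_1$, on the dyadic annuli $B_{2r}\setminus B_r$, and on $B_\epsilon$) interact correctly with the two rescaled limits is where the real work lies; the elliptic estimates themselves are all packaged into Lemma~\ref{Schauder} and Lemma~\ref{PertEst}.
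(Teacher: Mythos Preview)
Your overall architecture (contradiction, Schauder reduction to a weighted $C^0$ bound, then rescaled limits on the two model spaces) matches the paper, and your treatment of the $n>2$ outer limit and of the inner $Bl_0\mathbb C^n$ limit is essentially correct. There are, however, two genuine gaps.

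\textbf{The $n=2$ outer limit.} Your sketch of ``step 1.5'' has the right ingredients but the wrong assembly. The paper does \emph{not} use the constant in $\ker L_\omega$ to kill $b$; it uses it to kill $a$. The argument runs in three sub-steps. First, one must show $\phi_\infty(q_j)=0$ for every $j$, so that the limit satisfies $L^*_\omega\phi_\infty=0$ on $M_p$ (not merely $\tilde L^*_\omega\phi_\infty=0$). This is done by pairing $\tilde L^*_{\omega_{\epsilon_i}}\phi_i$ against the basis functions $f_j$ and computing $\int_{Bl_pM} L_{\omega_{\epsilon_i}}(f_j)\,\phi_i\,\omega_{\epsilon_i}^2/2$ region by region ($M\setminus B_{2r_{\epsilon_i}}$, the gluing annulus, and $B_{r_{\epsilon_i}}$); the stronger normalization $\|\tilde L^*_{\omega_{\epsilon_i}}\phi_i\|=o(\epsilon_i^{-\delta})$ is exactly what makes the left-hand side tend to zero while the cross terms $\int f_jf_k$ tend to $\delta_{jk}$. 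Second, with $L^*_\omega\phi_\infty=0$ on $M_p$, integrate $\int_M L_\omega(\xi)\,\phi_\infty$ by parts over $M\setminus B_r$; the only surviving boundary term as $r\to 0$ comes from $\nabla_N(\Delta_\omega\phi_\infty)$ and picks up $a$ (since $\nabla_N\Delta_{euc}\log|z|\sim |z|^{-3}$), yielding $\int_M L_\omega(\xi)\,\phi_\infty=-a\kappa\,\xi(p)$. Taking $\xi\equiv 1$ gives $a=0$. Third, once $a=0$ the function $\phi_\infty$ is bounded, hence extends across $p$ to a weak and then smooth solution of $\tilde L^*_\omega\phi_\infty=0$ on all of $M$, and Lemma~\ref{Minv} forces $\phi_\infty\equiv 0$ (this is what disposes of $b$ and $\theta$ simultaneously). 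Your proposal omits the first of these sub-steps entirely, and swaps the roles of $a$ and $b$ in the second.

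\textbf{The neck region.} Your Step~3 proposes a ``maximum-principle/barrier argument'' in the annulus. This is problematic: the leading term is the bilaplacian, and fourth-order operators do not enjoy a maximum principle, so a barrier argument will not go through in any routine way. The paper instead treats the case where the weighted sup is attained at points $x_i$ with $\epsilon_i^{-1}|x_i|\to\infty$ and $|x_i|\to 0$ by a \emph{third} rescaling: dilate by $|x_i|^{-1}$, so that the metrics $|x_i|^{-2}\omega_{\epsilon_i}$ converge to the flat metric on $\mathbb C^n\setminus\{0\}$, and the rescaled functions converge to a nontrivial element of $C^{4,\alpha}_\delta(\mathbb C^n\setminus\{0\})$ annihilated by $\Delta^2_{euc}$, contradicting Proposition~\ref{harm} for $\delta\in(-1,0)$.
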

\begin{proof}
    We will prove both cases simultaneously and comment where the differences occur. We argue by contradiction: suppose there exists a sequence of functions $\phi_i$ such that
    \begin{equation}\label{contra1}
    ||\phi_i||_{C_\delta^{4,\alpha}(Bl_pM)} = 1,\> ||\tilde{L}^*_{\omega_{\epsilon_i}}\phi_i||_{C^{0,\alpha}_{\delta - 4}(Bl_pM)} \leq \frac{1}{i},
    \end{equation}
    in the $n > 2$ case, and in the $n = 2$ case we assume furthermore there is $\epsilon_i \rightarrow 0$ such that
    \begin{equation}\label{contra2}
    ||\phi_i||_{C_\delta^{4,\alpha}(Bl_pM)} = 1,\> ||\tilde{L}^*_{\omega_{\epsilon_i}}\phi_i||_{C^{0,\alpha}_{\delta - 4}(Bl_pM)} \leq \frac{\epsilon_i^{-\delta}}{i}.
    \end{equation}
    From the uniform Schauder estimates Lemma~\ref{Schauder}, we have up to choosing a subsequence $\phi_i \rightarrow \phi_\infty$ locally in $C^{4,\alpha}$ on $M_p$ with $\phi_\infty \in C_\delta^{4,\alpha}(M_p)$, where we may have had to replace $\alpha$ which a slightly smaller constant.
    
    \underline{Step 1}: We show $\phi_\infty \equiv 0$ on $M_p$. When $n > 2$ this immediately follows from the fact that $\tilde{L}_{\omega}^*\phi_\infty = 0$ on $M_p$ and our choice of $\delta$ due to Proposition ~\ref{omegalin}. When $n = 2$, our slightly stronger assumption \eqref{contra2} gives for each $1 \leq j \leq d$
    \begin{equation}\label{intbound}
    \left|\int_{Bl_pM}f_j\tilde{L}^*_{\omega_{\epsilon_i}}(\phi_i)\frac{\omega_{\epsilon_i}^2}{2}\right| = \left|\int_{Bl_pM}f_jL^*_{\omega_{\epsilon_i}}(\phi_i)\frac{\omega_{\epsilon_i}^2}{2} - \sum_{k = 1}^d\phi_i(q_k)\int_{Bl_pM}f_jf_k\frac{\omega_{\epsilon_i}^2}{2}\right| < \frac{C}{i},
    \end{equation}
    where $C$ is a constant independent of $\epsilon$. First note that since $f_1, ..., f_d$ were assumed to be $L_2$-orthonormal that
    \[\lim_{i \rightarrow \infty}\int_{Bl_pM}f_jf_k\frac{\omega_{\epsilon_i}^2}{2} = \delta_{jk},\]
    and on the other hand we have
    \[\int_{Bl_pM}f_jL^*_{\omega_{\epsilon_i}}(\phi_i)\frac{\omega_{\epsilon_i}^2}{2} = \int_{Bl_pM}L_{\omega_{\epsilon_i}}(f_j)\phi_i\frac{\omega_{\epsilon_i}^2}{2}.\]
    Now we need to analyze this term as $i \rightarrow \infty$. We break up our manifold into $M\setminus B_{2r_{\epsilon_i}}$, $B_{2r_{\epsilon_i}} \setminus B_{r_{\epsilon_i}}$, and $B_{r_{\epsilon_i}}$, recalling that $r_\epsilon = \epsilon^\beta$ as defined in section 2. For $M \setminus B_{2r_{\epsilon_i}}$, in this region $\omega_{\epsilon_i} = \omega$ so we have
    \[\int_{M \setminus B_{2r_{\epsilon_i}}}L_{\omega_{\epsilon_i}}(f_j)\phi_i\frac{\omega_{\epsilon_i}^2}{2} = \int_{M \setminus B_{2r_{\epsilon_i}}}L_\omega(f_j)\phi_i\frac{\omega^2}{2} = 0.\]
    Next, in $B_{r_{\epsilon_i}}$ after the coordinate change $z = \epsilon_i w$ we have $\omega_{\epsilon_i} = \epsilon_i^2\eta$.  Note that $L_{\epsilon^2\eta} = \frac{1}{\epsilon^4}L_\eta$, so from \eqref{Lformula} we have
    \[\int_{B_{r_{\epsilon_i}}}L_{\omega_{\epsilon_i}}(f_j)\phi_i\frac{\omega_{\epsilon_i}^2}{2} = \int_{\tilde{B}_{r_{\epsilon_i}\epsilon_i^{-1}}}-\left(\frac{1}{4}\Delta^2_{\eta}(f_j(\epsilon_i w)) + \frac{1}{2}\text{Ric}_{\eta}\cdot \nabla_\eta^2(f_j(\epsilon_i w))\right)\phi_i(\epsilon_i w)\frac{\eta^2}{2}\]
    \[= \int_{\tilde{B}_{r_{\epsilon_i}\epsilon_i^{-1}}} -[\eta^{k\overline{\ell}}\partial_k\partial_{\overline{\ell}}(\eta^{a\overline{b}}\partial_a\partial_{\overline{b}}(f_j(\epsilon_i w))) + \text{Ric}_{\eta}^{k\overline{\ell}}\partial_{k}\partial_{\overline{\ell}}(f_j(\epsilon_i w))]\phi_i(\epsilon_i w)\frac{\eta^2}{2}.\]
    We also have that from \eqref{omega2} $\eta^{k\overline{\ell}} = \delta^{k\overline{\ell}} + O(|w|^{-2})$, $\text{Ric}_\eta^{k\overline{\ell}} = O(|w|^{-4})$, $|\nabla^k(f_j(\epsilon w))| \leq C\epsilon^k$, and $|\phi_i(\epsilon w)| = |\phi_i(z)| \leq C\epsilon^\delta$, where $C$ represents possibly different constants but nevertheless is independent of $\epsilon$. Thus,
    \[\left|\int_{B_{r_{\epsilon_i}}}L_{\omega_{\epsilon_i}}(f_j)\phi_i\frac{\omega_{\epsilon_i}^2}{2}\right| \leq C\left(\epsilon_i^4 + \epsilon_i^2\right)\epsilon^\delta\text{Vol}_{\eta}(\tilde{B}_{r_{\epsilon_i}\epsilon_i^{-1}}) \leq C\epsilon_i^{2}\epsilon_i^\delta r_{\epsilon_i}^4\epsilon_i^{-4} = C\epsilon_i^{4\beta - 2 + \delta},\]
    so this goes to zero as $i \rightarrow \infty$ as long as $\beta > 1/2$ and $\delta$ is taken small enough. Lastly, we look at the integral over $B_{2r_{\epsilon_i}} \setminus B_{r_{\epsilon_i}}$. Here we have
    \[\int_{B_{2r_{\epsilon_i}}\setminus B_{r_{\epsilon_i}}}L_{\omega_{\epsilon_i}}(f_j)\phi_i\frac{\omega_{\epsilon_i}^2}{2}=\int_{B_{2r_{\epsilon_i}}\setminus B_{r_{\epsilon_i}}}-\left(\frac{1}{4}\Delta^2_{\omega_{\epsilon_i}}f_j + \frac{1}{2}\text{Ric}_{\omega_{\epsilon_i}}\cdot \nabla_{\omega_{\epsilon_i}}^2f_j\right)\phi_i\frac{\omega_{\epsilon_i}^2}{2}.\]
    From the explicit formula for $\omega_{\epsilon_i}$ \eqref{omegaformula}, in the region $r_{\epsilon_i} \leq |z| \leq 2r_{\epsilon_i}$ we have
    \[\omega_{\epsilon_i}^{k\overline{\ell}} = \delta^{k\overline{\ell}} + O\left(\frac{\epsilon_i^2}{r_{\epsilon_i}^2}\log|\epsilon_i|\right),\>\text{Ric}^{k\overline{\ell}}_{\omega_{\epsilon_i}} = O\left(\frac{\epsilon_i^2}{r_{\epsilon_i}^4}\log|\epsilon_i|\right),\]
    and thus
    \[\left|\int_{B_{2r_{\epsilon_i}}\setminus B_{r_{\epsilon_i}}}L_{\omega_{\epsilon_i}}(f_j)\phi_i\frac{\omega_{\epsilon_i}^2}{2}\right| \leq C\left(1 + \frac{\epsilon_i^2}{r_{\epsilon_i}^4}\log|\epsilon_i|\right)\epsilon^\delta\text{Vol}_{\omega_{\epsilon_i}}(B_{2r_{\epsilon_i}}\setminus B_{r_{\epsilon_i}})\]
    \[\leq C\epsilon_i^2r_{\epsilon_i}^{-4}\log|\epsilon_i|\epsilon_i^\delta r_{\epsilon_i}^4 = C\epsilon_i^{2+ \delta}\log|\epsilon_i|.\]
    This goes to zero as $i \rightarrow \infty$ for $\delta$ small. The upshot is that from \eqref{intbound} we then get
    \[\lim_{i \rightarrow \infty}\left|\int_{Bl_pM}f_j\tilde{L}^*_{\omega_{\epsilon_i}}(\phi_i)\frac{\omega_{\epsilon_i}^2}{2}\right| = |\phi_\infty(q_j)| = 0,\]
    and thus on $M_p$, we not only have $\tilde{L}_\omega^* \phi_\infty = 0$ but
    \[L_\omega^* \phi_\infty = 0.\]
    We then have that in $B_1$ there is $a,b \in \mathbb R$ and $\theta \in C_{\delta + 2}^{4,\alpha}(M_p)$ such that $\phi_\infty = a\log|z| + b + \theta$ from Proposition~\ref{omegalin}.

    \underline{Step 1.5}: Continuing our focus on the $n = 2$ case, we now show that there is a constant $\kappa \neq 0$ such that for any $\xi \in C^\infty(M)$,
    \[\int_M L_\omega(\xi)\phi_\infty \frac{\omega^2}{2} = -a\kappa\xi(p).\]
    Thus, testing this equation with $\xi \equiv 1$, we must have that $a = 0$. Therefore $\phi_\infty$ extends smoothly to all of $M$, being a bounded weak solution to an elliptic equation on $M$. Then $\phi_\infty \equiv 0$ by the invertibility of $\tilde{L}^*_\omega$ on $M$ (Lemma~\ref{Minv}), as desired. This is the key step that necessitates us proving estimates on $L^*_{\omega_\epsilon}$ instead of $L_{\omega_{\epsilon}}$. Anyway, we write
    \[\int_M L_\omega(\xi)\phi_\infty \frac{\omega^2}{2} = \lim_{r \rightarrow 0}\int_{M \setminus B_r}-\left(\frac{1}{4}\Delta^2_{\omega}\xi + \frac{1}{2}\text{Ric}_{\omega}\cdot \nabla_{\omega}^2\xi\right)\phi_\infty \frac{\omega^2}{2}.\]
    Now we apply integration by parts to each term in the integrand. Denoting by $N$ the unit normal to the boundary sphere $\partial B_r$, we first look at
    \[\int_{M \setminus B_r}\Delta^2_\omega(\xi)\phi_\infty\frac{\omega^2}{2} = \int_{M \setminus B_r} \xi\Delta^2_\omega (\phi_\infty)\frac{\omega^2}{2} - \int_{\partial B_r} \xi\nabla_N(\Delta_\omega \phi_\infty)\text{Vol}_{\partial B_r} + \int_{\partial B_r}\Delta_\omega(\phi_\infty)\nabla_N(\xi)\text{Vol}_{\partial B_r}\]
    \[ - \int_{\partial B_r}\Delta_\omega(\xi)\nabla_N(\phi_\infty)\text{Vol}_{\partial B_r} + \int_{\partial B_r}\phi_\infty\nabla_N(\Delta_\omega \xi)\text{Vol}_{\partial B_r}.\]
    We investigate the boundary terms as $r \rightarrow 0$. Since $\phi_\infty$ behaves like $a\log|z|$ near zero, we have that the last three terms all go to zero in $r$ since the terms involving the derivatives of $\phi_\infty$ blow up at most like $O(r^{-2})$, whereas the volume of $\partial B_r$ decays like $O(r^3)$. The only issue is the first boundary term. When $r$ is small we have 
    \[\xi\nabla_N(\Delta_\omega \phi_\infty) = a\xi \nabla_N[(\delta^{k\overline{\ell}} + O(r^2))\partial_k\partial_{\overline{\ell}}\log|z|] + \xi\nabla_N(\Delta_\omega \theta).\]
    Due to the estimate on $\theta$, the last term decays at worst like $O(r^{\delta -1})$, and so integrating and taking the limit we have
    \[\lim_{r \rightarrow 0}\int_{M \setminus B_r}-\frac{1}{4}\Delta^2_\omega (\xi)\phi_\infty \frac{\omega^2}{2} = \int_{M}-\frac{1}{4}\Delta^2_\omega (\phi_\infty) \xi\frac{\omega^2}{2} + \lim_{r \rightarrow 0}\frac{a}{4}\int_{\partial B_r}\xi\nabla_N(\Delta_{euc}\log|z|)\text{Vol}_{\partial B_r}\]
    \[= \int_{M}-\frac{1}{4}\Delta^2_\omega (\phi_\infty) \xi\frac{\omega^2}{2} + a\kappa\xi(p).\]
    For the other integration by parts, we have
    \[\int_{M \setminus B_r}(\text{Ric}_{\omega}\cdot\nabla_\omega^2\xi)\phi_\infty\frac{\omega^2}{2}\]
    \[= \int_{M \setminus B_r}\xi\text{tr}_\omega\nabla_\omega(\text{tr}_\omega\nabla_\omega (\phi_\infty\text{Ric}_\omega))\frac{\omega^2}{2} + \int_{\partial B_r}\langle \nabla \xi \otimes N^b, \phi_\infty\text{Ric}_\omega \rangle \text{Vol}_{\partial B_r} - \int_{\partial B_r}\xi (\text{tr}_\omega \nabla (\phi_\infty\text{Ric}_{\omega}))(N)\text{Vol}_{\partial B_r}.\]
    Both of the boundary terms have integrands which blow up no worse than $O(r^{-1})$, so both boundary terms go to zero as $r\rightarrow 0$. Thus, we finally have that
    \[\int_ML_\omega (\xi)\phi_\infty \frac{\omega^2}{2} = \int_M \xi L^*_\omega(\phi_\infty)\frac{\omega^2}{2} - a\kappa\xi(p) = -a\kappa\xi(p),\]
    since $\phi_\infty$ is in the kernel of $L^*_\omega$ on $M_p$.
    
    \underline{Step 2}: The rest of the proof is now the same in any dimension and proceeds as in \cite{SzB}. To recount, we have as in \eqref{contra1} $\phi_i$ such that
\[||\phi_i||_{C_\delta^{4,\alpha}(Bl_pM)} = 1,\> ||\tilde{L}^*_{\omega_{\epsilon_i}}(\phi_i)||_{C_{\delta - 4}^{0,\alpha}(Bl_pM)} < \frac{1}{i},\]
(the second bound is actually slightly stronger in $n = 2$) and $\phi_i \rightarrow 0$ locally in $C^{4,\alpha}$, where again we may have to replace $\alpha$ with a slightly smaller constant. We then have the uniform Schauder estimate of Lemma~\ref{Schauder}
\[||\phi_i||_{C_\delta^{4,\alpha}(Bl_pM)} \leq C(||\phi_i||_{C_\delta^0(Bl_pM)} + ||\tilde{L}^*_{\omega_{\epsilon_i}}(\phi_i)||_{C^{0,\alpha}_{\delta - 4}(Bl_pM)}),\]
so then we have
\[1 \leq C||\phi_i||_{C^0_\delta(Bl_pM)} + \frac{C}{i} \iff \frac{1}{C} - \frac{1}{i} \leq ||\phi_i||_{C_\delta^0(Bl_pM)} \leq 1,\]
and so $||\phi_i||_{C_\delta^0(Bl_pM)}$ is uniformly bounded below by a positive constant, say $C'$. Applying the uniform Schauder estimates to $\psi_i := \frac{\phi_i}{||\phi_i||_{C_\delta^0(Bl_pM)}}$ we get
\[||\psi_i||_{C_\delta^{4,\alpha}(Bl_pM)} \leq C + \frac{C}{i||\phi_i||_{C_\delta^0(Bl_pM)}} \leq C + \frac{C}{iC'}.\]
Since the last term goes to zero we have a family of functions such that 
\begin{equation}\label{concen1}
    ||\psi_i||_{C_\delta^0(Bl_pM)} = 1,\> ||\psi_i||_{C_\delta^{4,\alpha}(Bl_pM)} \leq C,
\end{equation}
\begin{equation}\label{concen2}
||L^*_{\omega_{\epsilon_i}}(\psi_i)||_{C_{\delta - 4}^{0,\alpha}(Bl_pM)} \rightarrow 0,\> \psi_i \rightarrow 0 \text{ locally in } C^{4,\alpha} \text{ on } M_p.
\end{equation}
Now, we want to look at the point $x_i \in Bl_pM$ where $\rho_i^{-\delta}\psi_i$ achieves its maximum, where $\rho_i$ is as defined in the proof of Lemma~\ref{Schauder} but with respect to $\epsilon_i$, which due to the above assumptions is when
\[\rho_i^{-\delta}(x_i)\psi_i(x_i) = 1.\]
Since $\psi_i \rightarrow 0$ locally on $M_p$, $\rho_i(x_i) \rightarrow 0$. Then $\epsilon_i^{-1}\rho_i(x_i)$ is either bounded or unbounded; one can think of the former case as the $\psi_i$ concentrating close to the exceptional divisor as $i \rightarrow \infty$, the latter case is when $\psi_i$ concentrates in the Euclidean neck region.

\underline{Step 3}: We first suppose the former. In this case we have that $\epsilon_i^{-1}\rho_i(x_i) < R$ for some $R$ and for all $i$. Then under the identification of $B_1 \subset Bl_pM$ with the ball $\tilde{B}_{\epsilon^{-1}} \subset Bl_0\mathbb C^n$, we have that all the $x_i$ are inside $\tilde{B}_R$. Thus, up to taking a subsequence we have $x_i \rightarrow x_\infty \in \tilde{B}_R$. Rescaling by $\epsilon_i$, $\epsilon_i^{-\delta}\psi_i$ can be viewed as functions on larger subsets of $Bl_0\mathbb C^n$ still with a uniform $C_\delta^{4,\alpha}(Bl_0\mathbb C^n)$ bound from \eqref{concen1}. Thus we have that up to a subsequence and replacing $\alpha$, $\epsilon_i^{-\delta}\psi_i \rightarrow \psi_\infty$ locally in $C^{4,\alpha}$ on $Bl_0\mathbb C^n$.

Now note that on $Bl_0\mathbb C^n$ we have that $\epsilon_i^{-2}\omega_{\epsilon_i}$ locally converges to the Burns-Simanca metric $\eta$ and thus since
\[\epsilon_i^{-\delta}||L_{\omega_{\epsilon_i}}^*\psi_i||_{C_{\delta - 4}^{0,\alpha}(Bl_pM)} \rightarrow 0 \text{ from }\eqref{concen2} \text{, and }L_{\epsilon_i^{-2}\omega_{\epsilon_i}} = \epsilon_i^4L_{\omega_{\epsilon_i}},\]
we have $L_{\eta}\psi_\infty = 0$ on $Bl_0\mathbb C^2$. On the other hand by assumption
\[\epsilon_i^{-\delta}\rho_i(x_i)^\delta > R^\delta,\]
and by definition of $x_i$
\[\psi_i(x_i) = \rho_i(x_i)^\delta,\]
so
\[\psi_\infty(x_\infty) \geq R^\delta.\]
Thus, we have a contradiction of Proposition~\ref{etalin} since $\psi_\infty$ decays at infinity but is nonzero.

\underline{Step 4}: Lastly, we need to deal with the case when $\epsilon_i^{-1}\rho_i(x_i)$ is unbounded. Note again that $\rho_i(x_i) \rightarrow 0$, and thus in this case we have $\rho_i(x_i) = |x_i|$. Choose $r_i \rightarrow 0$ and $R_i \rightarrow \infty$ such that
\[R_i|x_i| \rightarrow 0,\> r_i|x_i|\epsilon_i^{-1} \rightarrow \infty.\]
We consider the annulus $A_i := B_{R_i|x_i|} \setminus B_{r_i|x_i|}$. By rescaling by $|x_i|$, we identify $A_i$ with $B_{R_i}\setminus B_{r_i} \subset \mathbb C^n \setminus \{0\}$. On $B_{R_i}\setminus B_{r_i}$ we consider the metric $|x_i|^{-2}\omega_{\epsilon_i}$ (here $\omega_{\epsilon_i}$ on $B_{R_i} \setminus B_{r_i}$ is $\omega_{\epsilon_i}$ on $A_i$ pulled back along the rescaling by $|x_i|$). The point of these annuli is that on $M_p$ they are shrinking to $p$, where $\omega$ is becoming euclidean, and on $Bl_0\mathbb C^n$ they are going to infinity where the Burns-Simanca metric is Euclidean. In other words, these metrics converge locally in any $C^k$ norm to the standard Euclidean metric on $\mathbb C^n \setminus \{0\}$. 

Now, under this identification all of the points $x_i$ lie on the unit sphere in $\mathbb C^n \setminus \{0\}$, and thus up to a subsequence $x_i \rightarrow x_\infty$. Further, the functions $|x_i|^{-\delta}\psi_i$, when viewed as functions on larger annuli in $\mathbb C^n \setminus \{0\}$, inherit uniform $C_\delta^{4,\alpha}$ bounds from \eqref{concen1}. Thus, up to a sequence $|x_i|^{-\delta}\psi_i \rightarrow \psi_\infty$ locally in $C^{4,\alpha}$ for possibly smaller $\alpha$. We then have $\psi_\infty \in C_\delta^{4,\alpha}$ and
\[\psi_\infty(x_\infty) = 1,\> \Delta_{euc}^2\psi_\infty = 0,\]
a contradiction of Proposition~\ref{harm} based on our choice of $\delta$.
\end{proof}

The above proposition can be summarized as follows.

\begin{Corollary}\label{Coro}
    Let $\delta \in (-1,0)$ sufficiently small.
    \begin{itemize}
        \item Suppose $n > 2$. For $\epsilon$ small there exists a constant $K > 0$ independent of $\epsilon$ such that $\tilde{L}^*_{\omega_\epsilon}$ is an isomorphism and
        \[||(\tilde{L}^*_{\omega_\epsilon})^{-1}||_{C^{4,\alpha}_\delta \rightarrow C_{\delta - 4}^{0,\alpha}} \leq K.\]
        \item Suppose $n = 2$. For $\epsilon$ small there exists a constant $K > 0$ independent of $\epsilon$ such that $\tilde{L}^*_{\omega_\epsilon}$ is an isomorphism and
        \[||(\tilde{L}^*_{\omega_\epsilon})^{-1}||_{C^{4,\alpha}_\delta \rightarrow C_{\delta - 4}^{0,\alpha}} \leq K\epsilon^\delta.\]
    \end{itemize}
\end{Corollary}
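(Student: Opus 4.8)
The plan is to deduce the Corollary from Proposition~\ref{invprop} with essentially no new analysis, using that for each \emph{fixed} $\epsilon>0$ the space $Bl_pM$ is a closed manifold. On it the weighted norm $\|\cdot\|_{C^{k,\alpha}_\delta(Bl_pM)}$ is equivalent to the ordinary H\"older norm (by a constant depending on $\epsilon$), so it is enough to establish invertibility of $\tilde{L}^*_{\omega_\epsilon}$ as a map of H\"older spaces on a compact manifold and then read the quantitative bound off Proposition~\ref{invprop}. For the first point, on the compact K\"ahler manifold $(Bl_pM,\omega_\epsilon)$ the linearization $L_{\omega_\epsilon}\colon C^{4,\alpha}\to C^{0,\alpha}$ is elliptic of index zero, being a lower-order perturbation of the formally self-adjoint operator $-\tfrac14\Delta^2_{g_\epsilon}$; the same holds for the formal adjoint $L^*_{\omega_\epsilon}$, and since $\tilde{L}^*_{\omega_\epsilon}$ differs from $L^*_{\omega_\epsilon}$ by a bounded operator of rank at most $d$ it too is Fredholm of index zero, for every $\epsilon$.

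Now Proposition~\ref{invprop} gives, for all sufficiently small $\epsilon$ and all $\phi\in C^{4,\alpha}_\delta(Bl_pM)$, the a priori bound $\|\phi\|_{C^{4,\alpha}_\delta}\le K\|\tilde{L}^*_{\omega_\epsilon}\phi\|_{C^{0,\alpha}_{\delta-4}}$ when $n>2$, and $\|\phi\|_{C^{4,\alpha}_\delta}\le K\epsilon^\delta\|\tilde{L}^*_{\omega_\epsilon}\phi\|_{C^{0,\alpha}_{\delta-4}}$ when $n=2$; in particular $\tilde{L}^*_{\omega_\epsilon}$ is injective. An injective Fredholm operator of index zero is a Banach space isomorphism, so $\tilde{L}^*_{\omega_\epsilon}\colon C^{4,\alpha}_\delta(Bl_pM)\to C^{0,\alpha}_{\delta-4}(Bl_pM)$ is invertible for all small $\epsilon$. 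Finally, given $g\in C^{0,\alpha}_{\delta-4}(Bl_pM)$, apply the a priori bound to $\phi:=(\tilde{L}^*_{\omega_\epsilon})^{-1}g$: since $\tilde{L}^*_{\omega_\epsilon}\phi=g$, this reads $\|(\tilde{L}^*_{\omega_\epsilon})^{-1}g\|_{C^{4,\alpha}_\delta}\le K\|g\|_{C^{0,\alpha}_{\delta-4}}$ for $n>2$ and $\le K\epsilon^\delta\|g\|_{C^{0,\alpha}_{\delta-4}}$ for $n=2$, which are exactly the asserted operator-norm bounds.

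I do not expect a genuine obstacle, since the entire content sits in Proposition~\ref{invprop}. The only point that needs a little care is the index-zero claim: it must be argued intrinsically on the closed manifold $Bl_pM$, where it is immediate from the principal symbol and finite-rank considerations, rather than through a weighted or gluing index formula, so that it is available for each fixed $\epsilon$ regardless of how the constants in the norms degenerate.
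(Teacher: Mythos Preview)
Your argument is correct and matches the paper's proof essentially line for line: injectivity from Proposition~\ref{invprop}, index zero for $\tilde{L}^*_{\omega_\epsilon}$ via the finite-rank perturbation of the index-zero operator $L^*_{\omega_\epsilon}$, hence surjectivity, and the norm bounds read off directly. You supply a little more detail than the paper (the equivalence of weighted and unweighted norms on the closed manifold, and the reason $L^*_{\omega_\epsilon}$ itself has index zero), but the approach is the same.
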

\begin{proof}
    The above proposition shows that $\tilde{L}^*_{\omega_\epsilon}$ is injective. The operator $\tilde{L}_{\omega_\epsilon}^*$ differs from the index zero operator $L^*_{\omega_\epsilon}$ by a finite rank operator, and thus must also have index zero. Thus we also have surjectivity, and the statement about the norms follows directly.
\end{proof}

We wish to draw attention to the fact that we could have proven the exact same estimates but for the operator $L_{\omega_\epsilon}$ in $n \geq 3$. This makes the remainder of the proof of the main theorem slightly cleaner, we only insist on $L^*_{\omega_\epsilon}$ to uniformize the approach across dimensions. The precise reason that we are unable to prove the estimates for $L_{\omega_\epsilon}$ in $n = 2$ is due to the added analytic difficulty necessitating the existence of a function on $M$ in the kernel of $L^*_\omega$ which is nonvanishing at the blowup point. Since our problem is open in nature (in the topological sense), one could ask if this condition is satisfied for K\"ahler metrics near our initial metric $\omega$, perhaps even within its own K\"ahler class. Similar generic aspects of eigenfunctions have been studied for example by K. Uhlenbeck \cite{Uhl} in the obviously less restrictive Riemannian case, it would be interesting to study similar questions in the K\"ahler case.

\section{An Improvement on $\omega_\epsilon$ when $n = 2$}
For this section we assume $n = 2$ exclusively. We will construct instead a metric $\tilde{\omega}_\epsilon$ on $Bl_pM$ which is a slight alteration of $\omega_\epsilon$ but still is a scaling down of the Burns-Simanca metric in $B_{r_\epsilon}$. The main advantage will be that the gluing annulus $B_{2r_\epsilon}\setminus B_{r_\epsilon}$ will have better behavior, namely we will no longer have to multiply the $\log$ term in the definition of $\omega_\epsilon$ by a cutoff. This better approximation is necessitated by the need to prove some estimate on the difference $S(\omega_\epsilon) - S(\omega)$. What appears here is only a minor alteration to the construction in \cite{SzB}.

 The main point is to argue that there exists a function $g \in \text{Ker}(L_\omega^*)$ on all of $M$, and a function $\Gamma$ on $M_p$ such that
\begin{equation}\label{g}
    L_\omega \Gamma = g
\end{equation}
on $M_p$, and $\Gamma(z) = \log|z| + \psi(z)$ near $p$, where $\psi \in C_1^{4,\alpha}(M_p)$. Note that by elliptic regularity $g$ must be smooth on $M$, and $\Gamma$ is also smooth on $M_p$.

To this end, let $\chi$ once again be a cutoff supported in $B_2$ and equal to $1$ in $B_1$. Let $\chi_r := \chi(z/r)$ for any $r > 0$. Then we will define the metric
    \[\omega'_r := \sqrt{-1}\partial\overline{\partial}(|z|^2 + \chi_r(z)\varphi(z))\]
    on $\mathbb C^2 \setminus \{0\}$. This metric is flat outside $B_{2r}$ and equal to $\omega$ in $B_r$. We need the following estimates.
    \begin{Lemma}
    \begin{itemize}
        \item For any $\delta \in \mathbb R$ and sufficiently small $r > 0$ there is a positive constant $C$ independent of $r$ such that
        \[||(L_{\omega'_r} + \frac{1}{4}\Delta^2_{euc})f||_{C_{\delta -4}^{0,\alpha}(\mathbb C^2 \setminus \{0\})} \leq Cr^2||f||_{C_\delta^{4,\alpha}(\mathbb C^2\setminus \{0\})}\]
        for any $f: \mathbb C^2\setminus \{0\} \rightarrow \mathbb R$. Then for $r$ sufficiently small $L_{\omega'_r}$ is an isomorphism whenever $\delta$ is not an indicial root of $\Delta^2_{euc}$.
        \item For any $\delta \leq -2$ we have $L_{\omega'_r}(\log|z|) \in C^{0,\alpha}_{\delta}(B_1)$.
    \end{itemize}
    \end{Lemma}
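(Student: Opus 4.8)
The plan is to treat the two bullet points separately; both are consequences of the explicit form of $\omega'_r = \sqrt{-1}\partial\overline{\partial}(|z|^2 + \chi_r(z)\varphi(z))$ together with the scaling behavior of the weighted H\"older norms on $\mathbb C^2 \setminus \{0\}$.

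For the first bullet, the key point is that $L_{\omega'_r}$ differs from $-\frac14\Delta^2_{euc} = L_{\omega_{euc}}$ only through the potential $\chi_r \varphi$, which is supported in $B_{2r}$ and satisfies $\chi_r\varphi = O(|z|^4)$ with scale-invariant derivative bounds: writing $\varphi = O(|z|^4)$ as in \eqref{normal}, one has $|\nabla^j(\chi_r\varphi)| \leq C r^{4-j}$ on $B_{2r}\setminus B_r$ (from the chain rule on $\chi(z/r)$ and Taylor's theorem on $\varphi$), and $|\nabla^j(\chi_r\varphi)| \leq C|z|^{4-j}$ on $B_r$. Consequently the metric coefficients satisfy $(g_{\omega'_r})_{j\overline k} = \delta_{j\overline k} + h_{j\overline k}$ with $h \in C^{2,\alpha}_2$ having norm $\leq Cr^2$ when measured on the relevant annuli. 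Since $L_{\omega'_r} + \frac14\Delta^2_{euc}$ is, schematically, a fourth-order operator whose coefficients are built from $h$, $g_{\omega'_r}^{-1} - \mathrm{id}$, and $\Ric_{\omega'_r}$ (all controlled by derivatives of $h$ and all vanishing outside $B_{2r}$), a term-by-term inspection exactly as in Lemma~\ref{PertEst} gives the bound $\|(L_{\omega'_r} + \frac14\Delta^2_{euc})f\|_{C^{0,\alpha}_{\delta-4}} \leq Cr^2\|f\|_{C^{4,\alpha}_\delta}$, with $C$ independent of $r$ because every coefficient bound is scale-invariant. Given this, for $\delta$ not an indicial root Proposition~\ref{harm} says $-\frac14\Delta^2_{euc}: C^{4,\alpha}_\delta \to C^{0,\alpha}_{\delta-4}$ is an isomorphism, and $L_{\omega'_r}$ is a perturbation of it with operator norm $\leq Cr^2 < $ (half the inverse norm) once $r$ is small, so $L_{\omega'_r}$ is an isomorphism by the standard Neumann series argument.

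For the second bullet, since $\Delta^2_{euc}\log|z| = 0$ on $\mathbb C^2\setminus\{0\}$, we have $L_{\omega'_r}(\log|z|) = (L_{\omega'_r} + \frac14\Delta^2_{euc})(\log|z|)$, which by the first bullet's coefficient structure is supported in $B_{2r}\setminus\{0\}$ and is a sum of terms each of which is a coefficient drawn from $\{h, g^{-1}_{\omega'_r}-\mathrm{id}, \Ric_{\omega'_r}\}$ contracted against derivatives of $\log|z|$ of order between two and four. On $B_1$, the derivatives $\nabla^j\log|z|$ are $O(|z|^{-j})$; the lowest-order (hence worst-decaying) contribution comes from the Ricci term $\Ric_{\omega'_r}\cdot\nabla^2\log|z|$, where $\Ric_{\omega'_r} = O(|z|^{2})$ near $0$ (it is built from two derivatives of $h = O(|z|^4)$, and from \eqref{omega3up}-type bookkeeping the components decay at least that fast) and $\nabla^2\log|z| = O(|z|^{-2})$, giving an $O(1)$ term near the origin; every other term decays at least as fast. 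More carefully, since $\omega'_r$ is genuinely a K\"ahler metric arising from a potential $|z|^2 + O(|z|^4)$, its scalar curvature and Ricci decay, and the product $L_{\omega'_r}(\log|z|)$ lies in $C^{0,\alpha}_{\delta}(B_1)$ for every $\delta \leq -2$ — indeed for $\delta$ up to $0$, but $\delta \leq -2$ is all that is claimed and is immediate from the crudest bounds. The one subtlety to handle with care is the H\"older seminorm near the boundary $|z| = 1$, but there $\log|z|$ and all coefficients are smooth and bounded, so that contributes nothing.

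The main obstacle is the bookkeeping in the first bullet: one must verify that \emph{every} coefficient of the fourth-order operator $L_{\omega'_r} + \frac14\Delta^2_{euc}$ — not just the metric but also $g^{-1}_{\omega'_r}$, the Christoffel symbols hidden in $\Delta^2_{\omega'_r}$, and the Ricci contraction $\Ric_{\omega'_r}\cdot\nabla^2_{\omega'_r}$ — inherits the $Cr^2$ bound in the $C^{\cdot,\alpha}_{\cdot}$ norms uniformly in $r$, which hinges on the scale-invariance of the weighted norms together with the fact that $\chi(z/r)$ has derivatives that scale precisely to cancel the extra powers of $|z|$ from $\varphi = O(|z|^4)$. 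This is exactly the mechanism already used in Lemma~\ref{PertEst} and Proposition~\ref{omegalin}, so I would simply point to those computations rather than reproduce them.
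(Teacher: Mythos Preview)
Your approach matches the paper's: expand the metric as $\delta_{j\overline k}+\partial_j\partial_{\overline k}(\chi_r\varphi)$, note the difference operator is supported in $B_{2r}$ with coefficients of order $|z|^2$ (hence bounded by $Cr^2$ there), and for the second bullet use that $\log|z|$ is Euclidean-biharmonic so only the difference operator contributes. The first bullet is fine, modulo a harmless slip: you write $h\in C^{2,\alpha}_2$, but the metric perturbation naturally sits in $C^{2,\alpha}_0$; what actually carries the $Cr^2$ bound is $\|\chi_r\varphi\|_{C^{4,\alpha}_2}$, and then Lemma~\ref{PertEst} (transported to $\mathbb C^2\setminus\{0\}$) gives the operator estimate directly.

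There is, however, a genuine computational error in your second bullet. You claim $\Ric_{\omega'_r}=O(|z|^2)$ because it is ``two derivatives of $h=O(|z|^4)$''. But $h$, as you defined it, is the metric perturbation $\partial\overline\partial(\chi_r\varphi)=O(|z|^2)$, not the potential; and in any case $\Ric = -\partial\overline\partial\log\det g$ involves \emph{four} derivatives of the potential, so the correct order is $\Ric_{\omega'_r}=O(1)$ near the origin (as the paper computes: $R_{j\overline k}=\partial_j\partial_{\overline k}\partial_\ell\partial_{\overline\ell}(\chi_r\varphi)+O(|z|^2)$). Consequently the Ricci term $\Ric\cdot\nabla^2\log|z|$ is $O(|z|^{-2})$, not $O(1)$, and the bilaplacian-difference term $\partial_\ell\partial_{\overline m}\big[\partial_j\partial_{\overline k}(\chi_r\varphi)\,\partial_j\partial_{\overline k}\log|z|\big]$ is likewise $O(|z|^{-2})$. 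So $L_{\omega'_r}(\log|z|)$ genuinely has an $|z|^{-2}$ singularity at the origin, and your parenthetical claim that the conclusion holds ``indeed for $\delta$ up to $0$'' is false: the restriction $\delta\le -2$ in the statement is sharp for this argument. Your final conclusion for the stated range $\delta\le -2$ survives, but the reasoning leading to it needs the corrected orders.
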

    \begin{proof}
      For the first claim, outside of $B_{2r}$ we already have that $L_{\tilde{\omega}_r} = -\frac{1}{4}\Delta^2_{euc}$, so we only focus within $B_{2r}$. Note that within $B_{2r}$ we have
      \[\nabla^i(\chi_r\varphi) = O(|z|^{4-i}),\]
      and the metric is given by
    \[\omega'_{r,j\overline{k}} = \delta_{j\overline{k}} + \partial_j\partial_{\overline{k}}(\chi_r\varphi).\]
    As long as $r$ is sufficiently small, in $B_{2r}$ we have
    \[\omega_r^{'j\overline{k}} = \delta^{j\overline{k}} - \partial_j\partial_{\overline{k}}(\chi_r\varphi) + O(|z|^4).\]
    For the Ricci curvature in $B_{2r}$ we have
    \[R_{j\overline{k}} = \partial_j\partial_{\overline{k}}(\log\det [\delta_{\ell\overline{m}} + \partial_\ell\partial_{\overline{m}}(\chi_r\varphi)]) = \partial_j\partial_{\overline{k}}(\partial_\ell\partial_{\overline{\ell}}(\chi_r\varphi)) + O(|z|^2),\]
    so then we have that in $B_{2r}$
    \[L_{\omega'_r}f = -\frac{1}{4}\Delta^2_{euc}f + \delta^{\ell\overline{m}}\partial_\ell\partial_{\overline{m}}[\partial_j\partial_{\overline{k}}(\chi_r\varphi)\partial_j\partial_{\overline{k}}f] + \partial_j\partial_{\overline{k}}(\partial_\ell\partial_{\overline{\ell}}(\chi_r\varphi))\partial_j\partial_{\overline{k}}f + O(|z|).\]
    Thus we have in $B_{2r}$ there is some constant $C$ such that
    \[|z|^2\left(\frac{1}{4}\Delta^2_{euc}f + L_{\omega'_r}f\right) = |z|^2(\delta^{\ell\overline{m}}\partial_\ell\partial_{\overline{m}}[\partial_j\partial_{\overline{k}}(\chi_r\varphi)\partial_j\partial_{\overline{k}}f] + \partial_j\partial_{\overline{k}}(\partial_\ell\partial_{\overline{\ell}}(\chi_r\varphi))\partial_j\partial_{\overline{k}}f + O(|z|))\]
    \[\leq Cr^2\partial_j\partial_{\overline{k}}f\]
    for $r$ small, which proves the desired estimate.

    For the second claim we have
    \[L_{\omega'_r}\log |z| = -\frac{1}{4}\Delta^2_{euc}\log|z| + \delta^{\ell\overline{m}}\partial_{\ell}\partial_{\overline{m}}[\partial_j\partial_{\overline{k}}(\chi_r\varphi)\partial_j\partial_{\overline{k}}\log|z|] + \partial_j\partial_{\overline{k}}(\partial_\ell\partial_{\overline{\ell}}(\chi_r\varphi))\partial_j\partial_{\overline{k}}\log|z| + O(|z|^{-1}).\]
    The claim then follows since $\log|z|$ is biharmonic away from zero.
    \end{proof}
    From this lemma we have that for any $r$ sufficiently small there exists $\xi_1 \in C^{4,\alpha}_{2-\tau}$ for any $\tau > 0$ such that 
    \[L_{\omega'_r}(\xi_1) = L_{\omega'_r}(\log|z|),\]
    which implies that near $p$
    \[L_\omega(\chi_r(z)\log|z| - \chi_r(z)\xi_1(z)) = 0,\]
    and the left side is a function on all of $M$. Arguing in the exact same fashion as in Lemma~\ref{Minv}, if $g_1, ..., g_d$ is an $L^2$-orthonormal basis for $\text{Ker}(L_\omega^*)$ then there exists points $q_1', ..., q_d' \in M_p$ such that there is a function $\xi_2 \in C^{4,\alpha}(M)$ satisfying
    \[L_\omega(\chi_r\log|z| - \chi_r\xi_1 - \xi_2) = -\sum_{i = 1}^d \xi_2(q_i')g_i\]
    on $M_p$. We can subtract a constant from $\xi_2$ to ensure that $\xi_2(p) = 0$, and then we set 
    \[\Gamma = \chi_r(\log|z| - \xi_1) - \xi_2,\]
    which then has the desired properties.

    Now we define
    \begin{equation}\label{omegatilde1}
    \tilde{\omega}_\epsilon := \omega_\epsilon + \epsilon^2\sqrt{-1}\partial\overline{\partial}(\gamma_1(\Gamma - \log\>\epsilon)).
    \end{equation}
    We have that near $p$, $\Gamma(z) = \log|z| + \psi$ with $\psi \in C_1^{4,\alpha}(M_p)$, so we can alternatively write
    \begin{equation}\label{omegatilde}
    \tilde{\omega}_\epsilon = \sqrt{-1}\partial\overline{\partial}(|z|^2 + \gamma_1(z)\varphi(z) + \epsilon^2\log|z| + \epsilon^2\gamma_1(z)\psi(z)).
    \end{equation}
    Note that in this presentation it is clear that we have preserved the property that the metric is equal to $\epsilon^2\eta$ in $B_{r_\epsilon}$ after changing coordinates.
    
    Lastly, we will need to have the same bounds on $(\tilde{L}^*_{\tilde{\omega}_\epsilon})^{-1}$ as we did for $(\tilde{L}^*_{\omega_\epsilon})^{-1}$. We have them because $||\gamma_1\Gamma||_{C_2^{4,\alpha}(Bl_pM)} \leq C\frac{\log|r_\epsilon|}{r_\epsilon^2}$, 
    and using Lemma~\ref{PertEst} in the last inequality we have
    \[||\phi||_{C_{\delta}^{4,\alpha}(Bl_pM)} \leq K\epsilon^{\delta}||\tilde{L}^*_{\omega_\epsilon}\phi||_{C^{0,\alpha}_{\delta-4}(Bl_pM)} \leq K\epsilon^\delta(||\tilde{L}^*_{\tilde{\omega}_\epsilon}\phi||_{C^{0,\alpha}_{\delta-4}(Bl_pM)} + ||(L^*_{\omega_\epsilon} - L^*_{\tilde{\omega}_\epsilon})\phi||_{C^{0,\alpha}_{\delta-4}(Bl_pM)})\]
    \[\leq K\epsilon^\delta(||\tilde{L}^*_{\tilde{\omega}_\epsilon}\phi||_{C^{0,\alpha}_{\delta-4}(Bl_pM)} + C(\epsilon^{2(1 - \beta)}\beta\log|\epsilon|)||\phi||_{C_{\delta}^{4,\alpha}(Bl_pM)}),\]
    which is equivalent to
    \begin{equation}\label{tildeinv}
    ||\phi||_{C_{\delta}^{4,\alpha}(Bl_pM)} \leq \frac{K\epsilon^\delta}{1 -  KC\epsilon^{2(1 - \beta) + \delta}\beta\log|\epsilon|}||\tilde{L}^*_{\tilde{\omega}_\epsilon}\phi||_{C^{0,\alpha}_{\delta-4}(Bl_pM)} \leq (K + 1)\epsilon^\delta ||\tilde{L}^*_{\tilde{\omega}_\epsilon}\phi||_{C^{0,\alpha}_{\delta-4}(Bl_pM)},
    \end{equation}
    for small $\epsilon$, given that $\delta \in (-1,0)$. We will just replace $K + 1$ with $K$ when discussing this bound later.
\section{Proof of the Main Theorem}

Per \eqref{form}, we now use the estimates on $(\tilde{L}_{\omega_\epsilon}^*)^{-1}$ and $(\tilde{L}_{\tilde{\omega}_\epsilon}^*)^{-1}$ to find and solve an equation with right hand side going to zero as $\epsilon \rightarrow 0$, providing the main theorem.  We deal with the $n > 2$ case first.

\subsection{The Equation for $n > 2$}We would like to find $\phi$ solving the equation
\[S(\omega) - S(\omega_\epsilon + \sqrt{-1}\partial\overline{\partial}\phi)= -\partial \phi \cdot \overline{\partial}S(\omega_\epsilon) - \partial S(\omega_\epsilon) \cdot \overline{\partial}\phi - \frac{1}{2}\phi\Delta_{\omega_\epsilon} S(\omega_\epsilon) - \sum_{i = 1}^d\phi(q_i)f_i,\>\> (*_1)\]
for each $\epsilon$ sufficiently small, where the $f_i$ and $q_i$ are as in Lemma~\ref{Minv}. Linearizing, we can instead write $S(\omega_\epsilon + \sqrt{-1}\partial\overline{\partial}\phi)$ as
\[S(\omega_\epsilon + \sqrt{-1}\partial\overline{\partial}\phi) = S(\omega_\epsilon) + L_{\omega_\epsilon}(\phi) + Q_{\omega_\epsilon}(\phi),\]
where $Q_{\omega_\epsilon}(\phi)$ are the higher order terms. Then the equation $(*_1)$ is equivalent to
\[S(\omega) - S(\omega_\epsilon) - Q_{\omega_\epsilon}(\phi) = L_{\omega_\epsilon}\phi -\partial \phi \cdot \overline{\partial}S(\omega_\epsilon) - \partial S(\omega_\epsilon) \cdot \overline{\partial}\phi - \frac{1}{2}\phi\Delta_{\omega_\epsilon} S(\omega_\epsilon) - \sum_{i = 1}^d\phi(q_i)f_i.\]
From the formula \eqref{diff} and the definition \eqref{Leptil} we recognize the right side of the equation now as $\tilde{L}_{\omega_\epsilon}^*\phi$, multiplying both sides by the inverse we have that $(*_1)$ is equivalent to
\[(\tilde{L}_{\omega_\epsilon}^*)^{-1}(S(\omega) - S(\omega_\epsilon) - Q_{\omega_\epsilon}(\phi)) = \phi.\]
Now, if we consider the map
\[\mathcal{N}_\epsilon: C_\delta^{4,\alpha}(Bl_pM) \rightarrow C_{\delta}^{4,\alpha}(Bl_pM)\]
\[\phi \mapsto (\tilde{L}_{\omega_\epsilon}^*)^{-1}(S(\omega) - S(\omega_\epsilon) - Q_{\omega_\epsilon}(\phi)),\]
$(*_1)$ can be rephrased as a fixed point problem for the nonlinear operator $\mathcal{N}_\epsilon$. We will now work to apply the Banach fixed point theorem. We already have bounds on $(\tilde{L}^*_{\omega_\epsilon})^{-1}$ from Corollary~\ref{Coro}, we also need bounds on $S(\omega_\epsilon) - S(\omega)$ and $Q_{\omega_\epsilon}(\phi)$.

\begin{Lemma}\label{Q}
    Let $k$ be a nonnegative integer, $\alpha \in (0,1)$, and $\delta \in \mathbb R$. Then if $c_0$ is as in Lemma~\ref{PertEst} and
    \[||\phi_1||_{C_2^{k+4,\alpha}(Bl_pM)},||\phi_2||_{C_2^{k+4,\alpha}(Bl_pM)} \leq c_0,\]
    then there is some $C > 0$ independent of $\epsilon$ such that
    \[||Q_{\omega_\epsilon}(\phi_2) - Q_{\omega_\epsilon}(\phi_1)||_{C^{k,\alpha}_{\delta}(Bl_pM)} \leq C(||\phi_1||_{C_2^{k+4,\alpha}(Bl_pM)} + ||\phi_2||_{C_2^{k+4,\alpha}(Bl_pM)})||\phi_1 - \phi_2||_{C_\delta^{k+4,\alpha}(Bl_pM)},\]
    for all $\epsilon$ small.
\end{Lemma}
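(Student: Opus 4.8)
The plan is to deduce the estimate directly from the operator bound on $L_{\omega_\phi} - L_{\omega_\epsilon}$ already furnished by Lemma~\ref{PertEst}, by expressing $Q_{\omega_\epsilon}(\phi_2) - Q_{\omega_\epsilon}(\phi_1)$ as an integral, along the segment $\phi_s := \phi_1 + s(\phi_2 - \phi_1)$, $s\in[0,1]$, of the linearization of the nonlinear operator $\phi \mapsto Q_{\omega_\epsilon}(\phi)$. The only conceptual ingredient is to recognize what that linearization is.

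First I would note that for any potential $\psi$ with $\omega_\psi := \omega_\epsilon + \sqrt{-1}\partial\overline{\partial}\psi$ positive, and any $\chi$, the directional derivative of $\psi \mapsto S(\omega_\epsilon + \sqrt{-1}\partial\overline{\partial}\psi)$ at $\psi$ in the direction $\chi$ is exactly $L_{\omega_\psi}(\chi)$ --- this is merely the definition \eqref{Lformula} of the linearized scalar curvature operator, based at $\omega_\psi$ instead of at $\omega$. Since $Q_{\omega_\epsilon}(\psi) = S(\omega_\psi) - S(\omega_\epsilon) - L_{\omega_\epsilon}(\psi)$ and $\psi \mapsto L_{\omega_\epsilon}(\psi)$ is linear, the derivative of $Q_{\omega_\epsilon}$ at $\psi$ is
\[ D_\psi Q_{\omega_\epsilon}(\chi) = L_{\omega_\psi}(\chi) - L_{\omega_\epsilon}(\chi) = (L_{\omega_\psi} - L_{\omega_\epsilon})(\chi). \]
All of this is pointwise on $Bl_pM$ and is legitimate because, in any local holomorphic chart, $S(\omega_\psi)(x)$ is a smooth --- indeed rational, with denominator a power of $\det(g_\epsilon + \sqrt{-1}\partial\overline{\partial}\psi)$, which is positive by positivity of $\omega_\psi$ --- function of the $4$-jet of $\psi$ at $x$; hence the chain rule and fundamental theorem of calculus may be applied in the parameter $s$ pointwise.

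Now, since $||\phi_1||_{C^{k+4,\alpha}_2(Bl_pM)}, ||\phi_2||_{C^{k+4,\alpha}_2(Bl_pM)} \le c_0$, convexity of the norm gives $||\phi_s||_{C^{k+4,\alpha}_2(Bl_pM)} \le c_0$ for every $s\in[0,1]$, so by Lemma~\ref{PertEst} each $\omega_{\phi_s}$ is a positive K\"ahler metric and the bound on $L_{\omega_{\phi_s}} - L_{\omega_\epsilon}$ is available along the whole path. The fundamental theorem of calculus then gives
\[ Q_{\omega_\epsilon}(\phi_2) - Q_{\omega_\epsilon}(\phi_1) = \int_0^1 (L_{\omega_{\phi_s}} - L_{\omega_\epsilon})(\phi_2 - \phi_1)\, ds, \]
and Lemma~\ref{PertEst}, applied with weight $\delta$ for the argument $\phi_2 - \phi_1$ and weight $2$ for the base point $\phi_s$, bounds the integrand uniformly in $\epsilon$ by
\[ ||(L_{\omega_{\phi_s}} - L_{\omega_\epsilon})(\phi_2 - \phi_1)||_{C^{k,\alpha}_{\delta - 4}(Bl_pM)} \le C\, ||\phi_s||_{C^{k+4,\alpha}_2(Bl_pM)}\, ||\phi_2 - \phi_1||_{C^{k+4,\alpha}_\delta(Bl_pM)}. \]
Taking the $C^{k,\alpha}_{\delta-4}$ norm inside the integral and using $\int_0^1 ||\phi_s||_{C^{k+4,\alpha}_2(Bl_pM)}\, ds \le ||\phi_1||_{C^{k+4,\alpha}_2(Bl_pM)} + ||\phi_2||_{C^{k+4,\alpha}_2(Bl_pM)}$ yields the asserted estimate, the codomain weight being $\delta - 4$ as dictated by Lemma~\ref{PertEst}.

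I do not expect any serious obstacle here: $\epsilon$-uniformity of the constant is inherited verbatim from Lemma~\ref{PertEst}, and the only steps deserving a word of care are the pointwise validity of the fundamental theorem of calculus in $s$ --- immediate from the explicit rational dependence of $S$ on the $4$-jet of the potential --- and the trivial verification that $\phi_s$ remains in the $c_0$-ball of $C^{k+4,\alpha}_2(Bl_pM)$ on which Lemma~\ref{PertEst} applies. A more pedestrian alternative would be to Taylor-expand $S(\omega_\epsilon + \sqrt{-1}\partial\overline{\partial}\phi)$ to second order in the $4$-jet of $\phi$ and to estimate the remainder term by term using the multiplicative properties of the weighted H\"older norms, but this merely re-derives the content of Lemma~\ref{PertEst} by hand and is strictly messier, so I would not pursue it.
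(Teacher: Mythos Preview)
Your proof is correct and takes essentially the same approach as the paper: both identify $D_\psi Q_{\omega_\epsilon} = L_{\omega_\psi} - L_{\omega_\epsilon}$ and invoke Lemma~\ref{PertEst} along the segment from $\phi_1$ to $\phi_2$, the only cosmetic difference being that the paper appeals to a mean value inequality at a single point $\chi = t\phi_1 + (1-t)\phi_2$ whereas you use the integral form of the fundamental theorem of calculus. Your observation that the natural codomain weight is $\delta - 4$ (as forced by Lemma~\ref{PertEst}) rather than the $\delta$ appearing in the stated inequality is also correct and is consistent with how the lemma is actually used downstream.
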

\begin{proof}
    By the mean value inequality there is a $t \in [0,1]$ such that $\chi = t\phi_1 + (1 - t)\phi_2$ satisfies
\[||Q_{\omega_\epsilon}(\phi_2) - Q_{\omega_\epsilon}(\phi_1)||_{C^{k,\alpha}_{\delta}(Bl_pM)} \leq ||DQ_{\omega_\epsilon,\chi}(\phi_2 - \phi_1)||_{C^{k,\alpha}_{\delta}(Bl_pM)}.\]
On the other hand we have
\[DQ_{\omega_\epsilon,\chi} = L_{\omega_\chi} - L_{\omega_\epsilon},\]
where $\omega_\chi := \omega_\epsilon + \sqrt{-1}\partial\overline{\partial}\chi$. Then since $||\chi||_{C_{2}^{k+4,\alpha}(Bl_pM)} \leq c_0$ by Lemma~\ref{PertEst} we have
\[||Q_{\omega_\epsilon}(\phi_2) - Q_{\omega_\epsilon}(\phi_1)||_{C^{k,\alpha}_{\delta}(Bl_pM)} \leq C||\chi||_{C_{2}^{k+4,\alpha}(Bl_pM)}||\phi_1 - \phi_2||_{C_\delta^{k+4,\alpha}(Bl_pM)}\]
\[\leq C(||\phi_1||_{C_2^{k+4,\alpha}(Bl_pM)} + ||\phi_2||_{C_2^{k+4,\alpha}(Bl_pM)})||\phi_1 - \phi_2||_{C_\delta^{k+4,\alpha}(Bl_pM)}.\]
\end{proof}

\begin{Lemma}\label{nonlin1}
Let $\delta < 0$ be chosen sufficiently small. There is a constant $c_1 > 0$ independent of $\epsilon$ such that if 
\[||\phi_1||_{C_2^{4,\alpha}(Bl_pM)},||\phi_2||_{C_2^{4,\alpha}(Bl_pM)} \leq c_1,\]
then
\[||\mathcal{N}_\epsilon(\phi_1) - \mathcal{N}_\epsilon(\phi_2)||_{C^{4,\alpha}_\delta(Bl_pM)} \leq \frac{1}{2}||\phi_1 - \phi_2||_{C_\delta^{4,\alpha}(Bl_pM)},\]
for all $\epsilon$ small.
\end{Lemma}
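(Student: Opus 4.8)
The plan is to show that $\mathcal{N}_\epsilon$ is a contraction on a suitable small ball by combining the uniform bound $\|(\tilde{L}^*_{\omega_\epsilon})^{-1}\| \leq K$ (or $K\epsilon^\delta$, but we are in the $n>2$ case here so just $K$) from Corollary~\ref{Coro} with the estimate on the nonlinearity $Q_{\omega_\epsilon}$ from Lemma~\ref{Q}. Writing $\mathcal{N}_\epsilon(\phi_1) - \mathcal{N}_\epsilon(\phi_2) = (\tilde{L}^*_{\omega_\epsilon})^{-1}\big(Q_{\omega_\epsilon}(\phi_2) - Q_{\omega_\epsilon}(\phi_1)\big)$, since the terms $S(\omega) - S(\omega_\epsilon)$ are independent of $\phi$ and cancel, we immediately get
\[\|\mathcal{N}_\epsilon(\phi_1) - \mathcal{N}_\epsilon(\phi_2)\|_{C^{4,\alpha}_\delta} \leq K\|Q_{\omega_\epsilon}(\phi_2) - Q_{\omega_\epsilon}(\phi_1)\|_{C^{0,\alpha}_{\delta-4}}.\]
Then Lemma~\ref{Q} (applied with $k=0$, and noting $\|\phi_j\|_{C^{4,\alpha}_2} \leq c_1 \leq c_0$) bounds the right side by $CK(\|\phi_1\|_{C^{4,\alpha}_2} + \|\phi_2\|_{C^{4,\alpha}_2})\|\phi_1 - \phi_2\|_{C^{4,\alpha}_\delta} \leq 2CKc_1\|\phi_1-\phi_2\|_{C^{4,\alpha}_\delta}$.

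The key point is the interplay of weights: $\mathcal{N}_\epsilon$ is being treated as a map on $C^{4,\alpha}_\delta$ with $\delta \in (-1,0)$, so the contraction estimate is measured in the $\delta$-weighted norm, while the smallness hypotheses on $\phi_1, \phi_2$ are phrased in the $C^{4,\alpha}_2$ norm. This is consistent with Lemma~\ref{Q}, which separates the "coefficient" factor (measured in weight $2$) from the "difference" factor (measured in weight $\delta$). Since $\delta < 2$, having $\|\phi_j\|_{C^{4,\alpha}_2}$ small is a stronger constraint than $\|\phi_j\|_{C^{4,\alpha}_\delta}$ small, so there is no circularity. I would choose $c_1 := \min\{c_0, \tfrac{1}{4CK}\}$, which is manifestly independent of $\epsilon$ because both $C$ and $K$ are, giving the factor $\tfrac{1}{2}$ in the conclusion.

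One routine check to spell out: Lemma~\ref{Q} as stated requires $\|\phi_j\|_{C^{k+4,\alpha}_2} \leq c_0$, and here $k+4 = 4$ so we need $\|\phi_j\|_{C^{4,\alpha}_2} \leq c_0$, which follows from $c_1 \leq c_0$. Also one should note that $(\tilde{L}^*_{\omega_\epsilon})^{-1}$ indeed maps $C^{0,\alpha}_{\delta-4}$ to $C^{4,\alpha}_\delta$ by Corollary~\ref{Coro}, so $\mathcal{N}_\epsilon$ genuinely lands in $C^{4,\alpha}_\delta(Bl_pM)$ as claimed in its definition. I do not anticipate a genuine obstacle here — this lemma is a bookkeeping consequence of the two hard inputs already established (the uniform invertibility estimate of Section 4 and the quadratic estimate Lemma~\ref{Q}); the only thing requiring care is making sure every norm is taken in the weight the cited lemma actually provides, and that the resulting constant is visibly $\epsilon$-independent so that a single $c_1$ works for all small $\epsilon$.
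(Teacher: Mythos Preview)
Your proposal is correct and follows exactly the same route as the paper: write $\mathcal{N}_\epsilon(\phi_1)-\mathcal{N}_\epsilon(\phi_2)=(\tilde{L}^*_{\omega_\epsilon})^{-1}(Q_{\omega_\epsilon}(\phi_2)-Q_{\omega_\epsilon}(\phi_1))$, apply the uniform bound on $(\tilde{L}^*_{\omega_\epsilon})^{-1}$ from Corollary~\ref{Coro} (with constant $K$ since $n>2$), then invoke Lemma~\ref{Q} and choose $c_1$ small. Your additional remarks on the weight bookkeeping and the explicit choice $c_1=\min\{c_0,\tfrac{1}{4CK}\}$ simply make explicit what the paper leaves to the reader.
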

\begin{proof}
First, we have that
\[\mathcal{N}_\epsilon(\phi_1) - \mathcal{N}_\epsilon(\phi_2) = (\tilde{L}_{\omega_\epsilon}^*)^{-1}(Q_{\omega_\epsilon}(\phi_2) - Q_{\omega_\epsilon}(\phi_1)).\]
From the bound on $(\tilde{L}_{\omega_\epsilon}^*)^{-1}$ in Corollary~\ref{Coro} and the bounds in Lemma~\ref{Q}, the result follows once $c_1$ is chosen small enough.
\end{proof}

\begin{Lemma}\label{ScalarEst1}
    For $\epsilon$ sufficiently small we have
    \[||S(\omega_\epsilon) - S(\omega)||_{C_{\delta - 4}^{2,\alpha}(Bl_pM)} \leq Cr_\epsilon^{4 - \delta}\]
    for some constant $C$ independent of $\epsilon$.
\end{Lemma}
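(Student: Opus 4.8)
The plan is to estimate $S(\omega_\epsilon)-S(\omega)$ region by region, using the explicit description of $\omega_\epsilon$ from \eqref{omegaformula} together with the weighted norm structure. Since $\omega_\epsilon\equiv\omega$ on $M\setminus B_{2r_\epsilon}$, the difference is identically zero there, so the only contributions come from the gluing annulus $B_{2r_\epsilon}\setminus B_{r_\epsilon}$ and the inner region $B_{r_\epsilon}$ where $\omega_\epsilon\equiv\epsilon^2\eta$. In the inner region, because $\eta$ is scalar flat, $S(\epsilon^2\eta)=\epsilon^{-2}S(\eta)=0$, so there $S(\omega_\epsilon)-S(\omega)=-S(\omega)$, and since $S(\omega)$ is a fixed smooth function on $M$ it contributes $O(1)$ pointwise; converting to the weighted $C^{2,\alpha}_{\delta-4}$ norm over a region where $\rho\sim|z|\lesssim r_\epsilon$ gives a factor of roughly $r_\epsilon^{4-\delta}$ (one must be a little careful: on $B_\epsilon$ the weight is $\epsilon^{\delta-4}$, but $\epsilon^{4-\delta}\le r_\epsilon^{4-\delta}$ since $\delta<4$ and $r_\epsilon=\epsilon^\beta$ with $\beta<1$, so this term is dominated).

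The heart of the matter is the gluing annulus $B_{2r_\epsilon}\setminus B_{r_\epsilon}$. Here I would compute $S(\omega_\epsilon)$ directly from the potential $|z|^2+\gamma_1\varphi+\epsilon^2\gamma_2\psi(\epsilon^{-1}z)$ (using $\log|\epsilon^{-1}z|$ in place of $\psi(\epsilon^{-1}z)$ when $n=2$). Since $\gamma_1\equiv 1$ in this annulus when $n=2$ — wait, no: recall $\gamma_1(z)=\gamma(z/r_\epsilon)$ cuts off between $B_{r_\epsilon}$ and $B_{2r_\epsilon}$, so in this annulus both $\gamma_1$ and $\gamma_2=1-\gamma_1$ are genuinely varying, and derivatives of $\gamma_1$ produce factors of $r_\epsilon^{-1}$ per derivative. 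The key estimates are the sizes of the terms being glued: $\varphi=O(|z|^4)$ with $\nabla^i\varphi=O(|z|^{4-i})$, so on the annulus $|z|\sim r_\epsilon$ the term $\gamma_1\varphi$ and its fourth-order derivatives contribute at scale $r_\epsilon^4, r_\epsilon^3 r_\epsilon^{-1}=r_\epsilon^2,\ldots$ — the dominant contribution after applying the (leading, Euclidean) bilaplacian being $O(r_\epsilon^0)$, but more precisely one tracks that $S$ picks up $\Delta_{euc}^2(\gamma_1\varphi)=O(r_\epsilon^{4-4})=O(1)$... this is too crude. One must instead observe that $S(\omega)$ itself is $O(|z|^2)=O(r_\epsilon^2)$ near $p$ in normal coordinates (scalar curvature of a metric that is Euclidean to order $|z|^2$), and that $S(\omega_\epsilon)$ in the annulus differs from $S(\omega)$ only through the terms involving $\gamma_2\epsilon^2\psi(\epsilon^{-1}z)$ (resp. the $\log$ term) and the cutoff errors where $\gamma_1$ transitions. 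The former is $O(\epsilon^2\cdot\epsilon^{2n-4}r_\epsilon^{4-2n})$-type (from $\psi=O(|w|^{4-2n})$ and the scaling of $\Delta^2$), and for $n=2$ is $O(\epsilon^2 r_\epsilon^{-4}\log)$ after two Laplacians, times $r_\epsilon^4$ for the weight — exactly the kind of bound seen in Step 1 of Proposition~\ref{invprop}. I would assemble these so that, pointwise on the annulus, $|S(\omega_\epsilon)-S(\omega)|\lesssim r_\epsilon^{2}+\epsilon^2 r_\epsilon^{-2}(\log)$ (with analogous bounds on derivatives up to order two measured in $\omega_\epsilon$), and since on this annulus $\rho\sim r_\epsilon$ the weighted $C^{2,\alpha}_{\delta-4}$ norm multiplies by $r_\epsilon^{4-\delta}$, giving the claimed $C r_\epsilon^{4-\delta}$ provided $\beta<2/3$ when $n=2$ (so that $\epsilon^2 r_\epsilon^{-2}\lesssim r_\epsilon^2$, i.e. $2-2\beta\ge 2\beta$... giving $\beta\le 1/2$, which is too strong; in fact one only needs $\epsilon^2 r_\epsilon^{-2}\log|\epsilon|\lesssim 1$, i.e. $2-2\beta>0$, automatic for $\beta<1$, so the $r_\epsilon^2$ term dominates regardless).

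The main obstacle I anticipate is bookkeeping the precise powers in the gluing annulus for $n=2$: the $\log|\epsilon^{-1}z|$ term, its cutoff $\gamma_2$, and the fact that $\log$ is biharmonic (so $\Delta^2_{euc}$ kills it, and only the cutoff-error and lower-order-in-$\omega_\epsilon$ corrections survive) must all be combined to show nothing worse than $O(r_\epsilon^{4-\delta})$ appears — this is exactly why the exponent $\beta\in(1/2,2/3)$ was fixed in Section 2, and why Section 5 introduces $\tilde\omega_\epsilon$ to avoid the cutoff on the $\log$ term in the first place; here with $\omega_\epsilon$ itself one simply absorbs the cutoff error into the constant since the range of $\beta$ makes it subdominant. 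The other steps — the inner region and the trivial outer region — are routine once the weighted norm definitions are unwound. I would organize the proof as: (i) outer region, zero; (ii) inner region $B_{r_\epsilon}$, use scalar-flatness of $\eta$ and boundedness of $S(\omega)$; (iii) gluing annulus, the main computation with the explicit potential and the cutoff derivative estimates; (iv) combine, noting all error exponents exceed $4-\delta$ in $r_\epsilon$ under the standing hypotheses on $\beta$ and $\delta\in(-1,0)$.
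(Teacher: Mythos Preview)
Your region-by-region strategy is exactly the paper's, and the outer and inner regions are handled correctly. Two substantive points, however, need correction.

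First, this lemma sits in Section~6.1 and is only for $n>2$; the paper's proof explicitly writes ``Rewriting \eqref{omegaformula} in $n\geq 3$'' and uses $\psi$ with $\beta=(n-1)/n$. Your extended discussion of $n=2$ (the $\log$ term, $\beta\in(1/2,2/3)$) is misplaced here, and in fact your conclusion for $n=2$ is wrong: the cutoff error from $\Delta_{euc}^2(\gamma_2\cdot\epsilon^2\log|\epsilon^{-1}z|)$ on the annulus contributes order $\epsilon^2 r_\epsilon^{-4}\log|\epsilon|$ pointwise, hence $\epsilon^2 r_\epsilon^{-\delta}\log|\epsilon|\sim\epsilon^{2}$ in $C^{0}_{\delta-4}$, which for $\beta>1/2$ is \emph{larger} than $r_\epsilon^{4-\delta}\sim\epsilon^{4\beta}$. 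That is precisely why Lemma~\ref{ScalarEst1} fails for $\omega_\epsilon$ when $n=2$ and why the paper introduces $\tilde\omega_\epsilon$ and proves Lemma~\ref{SC2} instead. So ``one simply absorbs the cutoff error into the constant since the range of $\beta$ makes it subdominant'' is false.

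Second, for $n\geq 3$ you reach $\epsilon^2\psi(\epsilon^{-1}z)=O(\epsilon^{2n-2}r_\epsilon^{4-2n})$ but stop short of the punchline: with $\beta=(n-1)/n$ one has $\epsilon^{2n-2}r_\epsilon^{4-2n}=\epsilon^{2(n-1)+(4-2n)(n-1)/n}=\epsilon^{4(n-1)/n}=r_\epsilon^4$, so both $\gamma_1\varphi$ and $\epsilon^2\gamma_2\psi(\epsilon^{-1}z)$ are $O(r_\epsilon^4)$ with matching derivative decay on the annulus. This is the mechanism that yields $\|S(\omega_\epsilon)\|_{C^{2,\alpha}_{\delta-4}(B_{2r_\epsilon}\setminus B_{r_\epsilon})}\leq Cr_\epsilon^{4-\delta}$ directly, and it is the whole content of the paper's proof. (Incidentally, your claim that $S(\omega)=O(|z|^2)$ in normal coordinates is wrong --- $S(\omega)(p)$ is generically nonzero, indeed positive or negative by hypothesis --- but this does no harm: $S(\omega)$ being merely bounded already gives $\|S(\omega)\|_{C^{2,\alpha}_{\delta-4}}\leq Cr_\epsilon^{4-\delta}$ on the annulus by the weight alone.)
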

\begin{proof}
$S(\omega_\epsilon) \equiv S(\omega)$ on $M \setminus B_{2r_\epsilon}$ and $S(\omega_\epsilon) \equiv 0$ on $B_{r_\epsilon}$, so the lemma really only needs to be checked in $B_{2r_\epsilon} \setminus B_{r_\epsilon}$. Rewriting \eqref{omegaformula} in $n \geq 3$, in this region the key is that
\[\omega_\epsilon = \omega_{euc} + \sqrt{-1}\partial\overline{\partial}(\gamma_1(z)\varphi(z) + \epsilon^2\gamma_2(z)\psi(\epsilon^{-1}z)),\]
and for any $\delta$
\[||\gamma_1(z)\varphi(z) + \epsilon^2\gamma_2(z)\psi(\epsilon^{-1}z)||_{C_\delta^{6,\alpha}(B_{2r_\epsilon}\setminus B_{r_\epsilon})} \leq Cr_\epsilon^{4 - \delta}.\]
This estimate is because when $\epsilon$ is small, in the annulus $B_{2r_\epsilon} \setminus B_{r_\epsilon}$ we have
\[\epsilon^2\psi(\epsilon^{-1}z) = O\left(\epsilon^2\frac{r_\epsilon^{4 - 2n}}{\epsilon^{4 - 2n}}\right) = O(r_\epsilon^4),\]
specifically due to our choice of $\beta$. With this estimate we can conclude
\[||S(\omega_\epsilon)||_{C_{\delta - 4}^{2.\alpha}(B_{2r_\epsilon} \setminus B_{r_\epsilon})} \leq Cr_\epsilon^{4 - \delta}.\]

\end{proof}

Combining all the estimates, we can solve $(*_1)$.

\begin{Proposition}
    Assume $n > 2$ and $\delta < 0$ sufficiently small. Then for all $\epsilon > 0$ sufficiently small, $(*_1)$ admits a solution with $||\phi||_{C^{4,\alpha}_\delta(Bl_pM)} \leq c_1\epsilon^{2-\delta}$.
\end{Proposition}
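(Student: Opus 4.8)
The goal is to show that the nonlinear operator $\mathcal{N}_\epsilon$ has a fixed point in a small ball of $C^{4,\alpha}_\delta(Bl_pM)$, via the Banach fixed point theorem. The two ingredients are already assembled: the contraction estimate for $\mathcal{N}_\epsilon$ (Lemma~\ref{nonlin1}) and the smallness of $\mathcal{N}_\epsilon(0)$, which we extract from the bound on $(\tilde{L}^*_{\omega_\epsilon})^{-1}$ in Corollary~\ref{Coro} and the scalar curvature estimate in Lemma~\ref{ScalarEst1}. So the strategy is: fix $\delta < 0$ small as required by all the cited results, and work in the closed ball $B_\epsilon := \{\phi : \|\phi\|_{C^{4,\alpha}_\delta(Bl_pM)} \leq c_1\epsilon^{2-\delta}\}$ for a constant $c_1$ to be pinned down (no larger than the $c_1$ of Lemma~\ref{nonlin1} and compatible with the $c_0$ of Lemma~\ref{PertEst}).

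First I would estimate $\mathcal{N}_\epsilon(0) = (\tilde{L}^*_{\omega_\epsilon})^{-1}(S(\omega) - S(\omega_\epsilon))$. By Corollary~\ref{Coro}, $\|(\tilde{L}^*_{\omega_\epsilon})^{-1}\|_{C^{0,\alpha}_{\delta-4}\to C^{4,\alpha}_\delta} \leq K$ (independent of $\epsilon$), and by Lemma~\ref{ScalarEst1}, $\|S(\omega_\epsilon) - S(\omega)\|_{C^{0,\alpha}_{\delta-4}(Bl_pM)} \leq Cr_\epsilon^{4-\delta}$. Since $r_\epsilon = \epsilon^\beta$ with $\beta = (n-1)/n$, we get $\|\mathcal{N}_\epsilon(0)\|_{C^{4,\alpha}_\delta} \leq KC\epsilon^{\beta(4-\delta)}$, and because $n \geq 3$ forces $\beta \geq 2/3$, the exponent $\beta(4-\delta)$ exceeds $2-\delta$ for $\delta$ small (indeed $\beta(4-\delta) - (2-\delta) = (1-\beta)\delta + 4\beta - 2 \geq 4\beta-2-|\delta| > 0$ when $|\delta|$ is small enough, using $\beta \geq 2/3$). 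Hence $\|\mathcal{N}_\epsilon(0)\|_{C^{4,\alpha}_\delta} \leq \tfrac{1}{2}c_1\epsilon^{2-\delta}$ for $\epsilon$ small.

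Next I would verify that $\mathcal{N}_\epsilon$ maps $B_\epsilon$ into itself: for $\phi \in B_\epsilon$, write $\|\mathcal{N}_\epsilon(\phi)\|_{C^{4,\alpha}_\delta} \leq \|\mathcal{N}_\epsilon(\phi) - \mathcal{N}_\epsilon(0)\|_{C^{4,\alpha}_\delta} + \|\mathcal{N}_\epsilon(0)\|_{C^{4,\alpha}_\delta} \leq \tfrac{1}{2}\|\phi\|_{C^{4,\alpha}_\delta} + \tfrac{1}{2}c_1\epsilon^{2-\delta} \leq c_1\epsilon^{2-\delta}$, where the contraction estimate Lemma~\ref{nonlin1} is applied — note that for this one needs $\phi \in B_\epsilon$ to satisfy $\|\phi\|_{C^{4,\alpha}_2} \leq c_1$, which holds because the $C^{4,\alpha}_2$ norm is controlled by the $C^{4,\alpha}_\delta$ norm up to a factor (since $\delta < 2$, the weight function $\rho^{\delta}$ dominates $\rho^2$, so $\|\phi\|_{C^{4,\alpha}_2} \leq \|\phi\|_{C^{4,\alpha}_\delta} \leq c_1\epsilon^{2-\delta} \leq c_1$ for $\epsilon$ small). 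Combined with Lemma~\ref{nonlin1}, $\mathcal{N}_\epsilon$ is then a $\tfrac{1}{2}$-contraction of the complete metric space $B_\epsilon$ into itself, so it has a unique fixed point $\phi$ with $\|\phi\|_{C^{4,\alpha}_\delta} \leq c_1\epsilon^{2-\delta}$, which by the reductions preceding the Proposition solves $(*_1)$.

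**Main obstacle.** None of the steps is genuinely hard given the machinery in hand; the one point requiring care is the bookkeeping of the exponents — checking that $\beta(4-\delta) > 2 - \delta$ for the admissible $\beta$ and for $\delta$ sufficiently small negative, which is exactly where the technical choice $\beta = (n-1)/n$ (equivalently $4\beta - 2 > 0$, i.e. the decay rate of the Burns-Simanca potential $\psi = O(|w|^{4-2n})$ beats the gluing scale) pays off. A secondary point is ensuring the ball $B_\epsilon$ lies inside the domain where Lemmas~\ref{PertEst}, \ref{Q}, \ref{nonlin1} apply (the $c_0$, $c_1$ smallness), which is automatic once $\epsilon$ is small since $\epsilon^{2-\delta} \to 0$.
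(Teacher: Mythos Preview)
Your approach is exactly the paper's: work in the ball of radius $c_1\epsilon^{2-\delta}$ in $C^{4,\alpha}_\delta$, bound $\mathcal{N}_\epsilon(0)$ via Corollary~\ref{Coro} and Lemma~\ref{ScalarEst1}, check the exponent inequality $(4-\delta)\beta > 2-\delta$, and close with the Banach fixed point theorem.

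There is one slip in your justification that $\phi \in B_\epsilon$ implies $\|\phi\|_{C^{4,\alpha}_2} \leq c_1$. You write $\|\phi\|_{C^{4,\alpha}_2} \leq \|\phi\|_{C^{4,\alpha}_\delta}$ on the grounds that $\rho^\delta \geq \rho^2$ for $\delta < 2$, but this inequality goes the wrong way: the weighted norm behaves like $\sup \rho^{-\delta}|\phi|$, so a bound $|\phi| \lesssim \rho^\delta$ does \emph{not} give $|\phi| \lesssim \rho^2$ when $\delta < 2$. In fact for $\delta < 2$ one has the opposite monotonicity, $\|\phi\|_{C^{4,\alpha}_\delta} \leq \|\phi\|_{C^{4,\alpha}_2}$. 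The correct comparison uses that $\rho \geq \epsilon$ on $Bl_pM$, whence
\[
\|\phi\|_{C^{4,\alpha}_2(Bl_pM)} \;\leq\; \epsilon^{\delta - 2}\,\|\phi\|_{C^{4,\alpha}_\delta(Bl_pM)} \;\leq\; \epsilon^{\delta-2}\cdot c_1\epsilon^{2-\delta} \;=\; c_1,
\]
which is exactly what the paper uses (and is why the radius of the ball is chosen to be $c_1\epsilon^{2-\delta}$ rather than anything smaller). With this correction your argument is complete and identical to the paper's.
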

\begin{proof}
Let $\mathcal{B}$ denote the ball of radius $c_1\epsilon^{2 - \delta}$ in $C_\delta^{4,\alpha}(Bl_pM)$. If $\phi \in \mathcal{B}$, then $||\phi||_{C_2^{4,\alpha}(Bl_pM)} \leq c_1$. Then the above estimates give that $\mathcal{N}_\epsilon$ is a contraction on $\mathcal{B}$ for $\epsilon$ small. We show in addition that $\mathcal{N}_\epsilon(\mathcal{B}) \subset \mathcal{B}$ for $\epsilon$ small, from which the proposition follows immediately by the Banach fixed point theorem. 

First, we have from Lemma~\ref{nonlin1}
\[||\mathcal{N}_\epsilon(\phi)||_{C_\delta^{4,\alpha}(Bl_pM)} \leq ||\mathcal{N}_\epsilon(\phi) - \mathcal{N}_\epsilon(0)||_{C_\delta^{4,\alpha}(Bl_pM)} + ||\mathcal{N}_\epsilon(0)||_{C_\delta^{4,\alpha}(Bl_pM)}\]
\[\leq \frac{1}{2}||\phi||_{C_\delta^{4,\alpha}(Bl_pM)} + ||\mathcal{N}_\epsilon(0)||_{C_\delta^{4,\alpha}(Bl_pM)}.\]
Recall $\mathcal{N}_\epsilon(0) = (\tilde{L}^*_{\omega_\epsilon})^{-1}(S(\omega) - S(\omega_\epsilon))$, so from the uniform estimates on $(\tilde{L}^*_{\omega_\epsilon})^{-1}$ in Corollary~\ref{Coro} and the estimate on $S(\omega) - S(\omega_\epsilon)$ in Lemma~\ref{ScalarEst1} we have independent of $\epsilon$ the bound
\[||\mathcal{N}_\epsilon(0)||_{C_\delta^{4,\alpha}(Bl_pM)} \leq Cr_\epsilon^{4 - \delta}.\]
Now, $(4 - \delta)\beta > 2 - \delta$ when $n > 2$ and $\delta$ is chosen small enough, so then when $\epsilon$ is sufficiently small
\[||\mathcal{N}_\epsilon(0)||_{C_\delta^{4,\alpha}(Bl_pM)} \leq \frac{1}{2}c_1\epsilon^{2 - \delta},\]
which completes the proof.
\end{proof}

\subsection{The Equation for $n = 2$} The $n = 2$ case is only very slightly different. We instead solve the equation
\[S(\omega) - S(\tilde{\omega}_\epsilon + \sqrt{-1}\partial\overline{\partial}\phi) = -\partial\phi \cdot \overline{\partial}S(\tilde{\omega}_\epsilon) - \partial S(\tilde{\omega}_\epsilon)\cdot\overline{\partial} \phi - \frac{1}{2}\phi\Delta_{\tilde{\omega}_\epsilon}S(\tilde{\omega}_\epsilon) - \sum_{i = 1}^d\phi(q_i)f_i - \epsilon^2g,\>\> (*_2)\]
where we have the addition of $g$ from \eqref{g}. By an entirely similar calculation as above, $(*_2)$ is equivalent to
\[(\tilde{L}^*_{\tilde{\omega}_{\epsilon}})^{-1}(S(\omega) - S(\tilde{\omega}_\epsilon) + \epsilon^2g - Q_{\tilde{\omega}_\epsilon}(\phi)) = \phi,\]
so we instead consider the fixed point problem for the operator
\[\mathcal{N}_\epsilon: C_\delta^{4,\alpha}(Bl_pM) \rightarrow C_{\delta}^{4,\alpha}(Bl_pM)\]
\[\phi \mapsto (\tilde{L}_{\tilde{\omega}_\epsilon}^*)^{-1}\left(S(\omega) - S(\tilde{\omega}_\epsilon) + \epsilon^2g - Q_{\tilde{\omega}_\epsilon}(\phi)\right).\]
Once again we just need to prove similar estimates as in the $n > 2$ case.
\begin{Lemma}\label{N2}
Let $\delta < 0$ be chosen sufficiently small. There is a constant $c_1 > 0$ independent of $\epsilon$ such that if 
\[||\phi_1||_{C_2^{4,\alpha}(Bl_pM)},||\phi_2||_{C_2^{4,\alpha}(Bl_pM)} \leq c_1\epsilon^{-\delta},\]
then
\[||\mathcal{N}_\epsilon(\phi_1) - \mathcal{N}_\epsilon(\phi_2)||_{C^{4,\alpha}_\delta(Bl_pM)} \leq \frac{1}{2}||\phi_1 - \phi_2||_{C_\delta^{4,\alpha}(Bl_pM)},\]
for all $\epsilon$ small.
\end{Lemma}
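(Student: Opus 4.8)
The plan is to mirror the proof of Lemma~\ref{nonlin1} from the $n > 2$ case, tracking carefully how the factor $\epsilon^{-\delta}$ in the hypothesis interacts with the $\epsilon^\delta$ factor in the bound on $(\tilde L^*_{\tilde\omega_\epsilon})^{-1}$ coming from \eqref{tildeinv}. As in the earlier case, the starting observation is that the affine terms $S(\omega) - S(\tilde\omega_\epsilon) + \epsilon^2 g$ cancel when we take a difference, so
\[\mathcal{N}_\epsilon(\phi_1) - \mathcal{N}_\epsilon(\phi_2) = (\tilde L^*_{\tilde\omega_\epsilon})^{-1}\bigl(Q_{\tilde\omega_\epsilon}(\phi_2) - Q_{\tilde\omega_\epsilon}(\phi_1)\bigr).\]
Then I would apply \eqref{tildeinv} to bound the left-hand side by $K\epsilon^\delta \|Q_{\tilde\omega_\epsilon}(\phi_2) - Q_{\tilde\omega_\epsilon}(\phi_1)\|_{C^{0,\alpha}_{\delta-4}(Bl_pM)}$, and then invoke Lemma~\ref{Q} (with $k = 0$) to estimate the $Q$-difference by $C(\|\phi_1\|_{C_2^{4,\alpha}} + \|\phi_2\|_{C_2^{4,\alpha}})\|\phi_1 - \phi_2\|_{C^{4,\alpha}_\delta}$.

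The one genuinely new bookkeeping point is that Lemma~\ref{Q} requires $\|\phi_j\|_{C_2^{4,\alpha}(Bl_pM)} \le c_0$, whereas here we are only assuming $\|\phi_j\|_{C_2^{4,\alpha}(Bl_pM)} \le c_1 \epsilon^{-\delta}$. Since $\delta < 0$, the quantity $\epsilon^{-\delta} \to 0$ as $\epsilon \to 0$, so for $\epsilon$ small enough $c_1 \epsilon^{-\delta} \le c_0$ regardless of the choice of $c_1$, and Lemma~\ref{Q} applies; I should state this explicitly. Chaining the two estimates gives
\[\|\mathcal{N}_\epsilon(\phi_1) - \mathcal{N}_\epsilon(\phi_2)\|_{C^{4,\alpha}_\delta(Bl_pM)} \le K C \epsilon^\delta\bigl(\|\phi_1\|_{C_2^{4,\alpha}(Bl_pM)} + \|\phi_2\|_{C_2^{4,\alpha}(Bl_pM)}\bigr)\|\phi_1 - \phi_2\|_{C^{4,\alpha}_\delta(Bl_pM)}.\]
Using the hypothesis, $\epsilon^\delta(\|\phi_1\|_{C_2^{4,\alpha}} + \|\phi_2\|_{C_2^{4,\alpha}}) \le 2 c_1 \epsilon^\delta \epsilon^{-\delta} = 2 c_1$, so the prefactor is at most $2 K C c_1$, which is $\le \frac{1}{2}$ once $c_1$ is chosen small enough (independently of $\epsilon$). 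This gives the contraction estimate.

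I do not expect a serious obstacle here; the lemma is a routine adaptation, and the only thing to be careful about is the direction of the inequality $\epsilon^{-\delta} \to 0$ (which works in our favor because $\delta < 0$) so that the a priori bound $c_1\epsilon^{-\delta}$ on $\|\phi_j\|_{C_2^{4,\alpha}}$ is compatible with the smallness hypothesis of Lemma~\ref{PertEst}/Lemma~\ref{Q} for small $\epsilon$, and simultaneously the rescaled bound on $\|\phi_j\|_{C^{4,\alpha}_\delta}$ times $\epsilon^\delta$ stays bounded so the contraction constant can be made less than $\tfrac12$. The analogous self-map property $\mathcal{N}_\epsilon(\mathcal{B}) \subset \mathcal{B}$ on a suitable ball will presumably be handled in a following proposition, using Lemma~\ref{N2} together with an estimate on $S(\omega) - S(\tilde\omega_\epsilon) + \epsilon^2 g$, which is exactly the quantity that the improved metric $\tilde\omega_\epsilon$ of section 5 was built to control.
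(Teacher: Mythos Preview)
Your proposal is correct and follows exactly the approach the paper takes: the paper's proof is literally the one sentence ``identical to Lemma~\ref{nonlin1}, we just need $\epsilon^{-\delta}$ to account for the slightly weaker bound on $(\tilde{L}^*_{\tilde{\omega}_\epsilon})^{-1}$,'' and you have simply spelled out that computation, including the observation that $c_1\epsilon^{-\delta}\le c_0$ for small $\epsilon$ so Lemma~\ref{Q} applies.
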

\begin{proof}
The proof is identical to Lemma~\ref{nonlin1}, we just need $\epsilon^{-\delta}$ to account for the slightly weaker bound on $(\tilde{L}^*_{\tilde{\omega}_\epsilon})^{-1}$.
\end{proof} 

\begin{Lemma}\label{SC2}
    Let $\delta < 0$ be chosen sufficiently small. For $\epsilon$ sufficiently small we have
    \[||S(\tilde{\omega}_\epsilon) - \epsilon^2 g - S(\omega)||_{C^{2,\alpha}_{\delta - 4}} \leq Cr_{\epsilon}^{4 - \delta}\]
    for some constant $C$ independent of $\epsilon$.
\end{Lemma}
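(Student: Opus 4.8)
The plan is to expand the scalar curvature about $\omega$ rather than about $\tilde{\omega}_\epsilon$ or $\omega_\epsilon$; this is exactly what the modification of Section 5 is built for. Unwinding \eqref{omegatilde1}--\eqref{omegatilde}, on $M_p$ one may write $\tilde{\omega}_\epsilon = \omega + \sqrt{-1}\partial\overline{\partial}v$ with $v := -\gamma_2\varphi + \epsilon^2\gamma_2\log|z| + \epsilon^2\gamma_1\Gamma$ (all the $\log\epsilon$ terms combine into a constant and drop out), so that near $p$, $v = \epsilon^2\Gamma - \gamma_2(\varphi + \epsilon^2\psi)$, and $v = \epsilon^2\Gamma$ on $M\setminus B_{2r_\epsilon}$. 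The crucial point is that $v - \epsilon^2\Gamma = -\gamma_2(\varphi + \epsilon^2\psi)$ is supported in $B_{2r_\epsilon}$. Writing $S(\tilde{\omega}_\epsilon) = S(\omega) + L_\omega v + Q_\omega(v)$, with $Q_\omega$ the quadratic-and-higher remainder, and using $L_\omega\Gamma = g$ from \eqref{g}, we get on $M_p$
\[
S(\tilde{\omega}_\epsilon) - \epsilon^2 g - S(\omega) = -L_\omega\big(\gamma_2(\varphi + \epsilon^2\psi)\big) + Q_\omega(v),
\]
the first term being supported in $B_{2r_\epsilon}$. On $M_p\setminus B_{r_\epsilon}$ the perturbation $\sqrt{-1}\partial\overline{\partial}v$ is $o(1)$ in $C^0$ as $\epsilon\to 0$ (its largest piece is $\epsilon^2\sqrt{-1}\partial\overline{\partial}\log|z| = O(\epsilon^2|z|^{-2}) = O(\epsilon^{2-2\beta})$ there), so $\tilde{\omega}_\epsilon$ is uniformly close to $\omega$ and the bounds on $Q_\omega(v)$ of the type in Lemmas~\ref{PertEst} and~\ref{Q} apply; near $E$ we instead use that $\tilde{\omega}_\epsilon = \epsilon^2\eta$ is scalar-flat. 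We estimate the right-hand side in $C^{2,\alpha}_{\delta-4}(Bl_pM)$ over the three regions $M\setminus B_{2r_\epsilon}$, $B_{2r_\epsilon}\setminus B_{r_\epsilon}$, and $B_{r_\epsilon}$ separately.

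On $M\setminus B_{2r_\epsilon}$ we have $\gamma_2\equiv 0$, so the difference equals $Q_\omega(\epsilon^2\Gamma)$. Since $\Gamma = \log|z| + \psi$ near $p$ with $\psi\in C^{4,\alpha}_1(M_p)$ and $\log|z|$ is smooth off $p$, we have $|\nabla^i\Gamma|\le C|z|^{-i}$ near $p$ for $1\le i\le 4$ and bounded derivatives away from $p$. Because $S(\omega_\phi)$ is linear in the fourth derivatives of $\phi$, every quadratic term of $Q_\omega$ is a product of two factors of total order at most six in $v$, hence is $O(\epsilon^4|z|^{-6})$ near $p$ with the expected decay under differentiation, the cubic-and-higher terms being smaller; away from $p$, $Q_\omega(\epsilon^2\Gamma) = O(\epsilon^4)$. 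Thus the desired bound on this region reduces to $\epsilon^4|z|^{-6}\le Cr_\epsilon^{4-\delta}|z|^{\delta-4}$, that is, $\epsilon^4\le Cr_\epsilon^{4-\delta}|z|^{\delta+2}$; since $\delta+2>0$, the right side is minimized over $|z|\ge 2r_\epsilon$ at $|z|\sim r_\epsilon$, where it is $\sim r_\epsilon^{6} = \epsilon^{6\beta}$, so the inequality holds precisely because $\beta < 2/3$ (with room to spare for the constant).

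On the gluing annulus $B_{2r_\epsilon}\setminus B_{r_\epsilon}$, rescale by $z = r_\epsilon y$. Writing $\varphi(r_\epsilon y) = r_\epsilon^4\tilde{\varphi}_\epsilon(y)$ with $\tilde{\varphi}_\epsilon$ bounded in $C^{k,\alpha}$ uniformly in $\epsilon$, and using that near $p$ the operator $L_\omega$ differs from $-\tfrac14\Delta^2_{euc}$ by terms with $O(|z|^2)$ coefficients on the fourth-order part, one finds $L_\omega(\gamma_2\varphi) = -\tfrac14\Delta^2_{euc}(\gamma_2\varphi) + O(r_\epsilon^2)$, which on this annulus is bounded by $O(1)$ with $i$-th derivatives $O(r_\epsilon^{-i})$ and Hölder seminorm $O(r_\epsilon^{-2-\alpha})$ --- i.e.\ scale-invariantly $O(1)$ --- while $L_\omega(\epsilon^2\gamma_2\psi) = O(\epsilon^2 r_\epsilon^{-3}) = o(1)$ using $\psi\in C^{4,\alpha}_1$. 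Tracking the three pieces of $v$ shows that $Q_\omega(v)$ here is at worst $\sim\epsilon^4 r_\epsilon^{-6}$ (with a factor $r_\epsilon^{-1}$ per derivative), again $o(1)$ by $\beta<2/3$. Since the weight is $\rho\sim r_\epsilon$ on this annulus, a scale-invariantly $O(1)$ quantity has $C^{2,\alpha}_{\delta-4}$ norm $O(r_\epsilon^{4-\delta})$, so this region contributes $O(r_\epsilon^{4-\delta})$. Finally, on $B_{r_\epsilon}$ we have $S(\tilde{\omega}_\epsilon)\equiv 0$ by scalar-flatness of the Burns--Simanca metric, so the difference is the fixed function $-\epsilon^2 g - S(\omega)$, bounded with bounded derivatives on $M$; since $B_{r_\epsilon}$ shrinks to $p$ where the weight is $\rho\le r_\epsilon$, rescaling as in the definition of the norm gives a contribution $O(r_\epsilon^{4-\delta})$ (in fact $O(\epsilon^{4-\delta})$, as $\beta<1$). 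Summing the three contributions yields the claim.

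The main obstacle is the estimate on $Q_\omega(\epsilon^2\Gamma)$ over $M\setminus B_{2r_\epsilon}$ near its inner boundary $|z|\sim r_\epsilon$: this is the unique place forcing the upper bound $\beta<2/3$, and the one where the weighted norms must be tracked most carefully, the built-in $\log|z|$ singularity of $\Gamma$ being exactly what makes the fourth derivatives of $v$ large while keeping the metric perturbation small. Everything else is routine weight bookkeeping of the kind carried out in Lemmas~\ref{PertEst}, \ref{Q}, and~\ref{ScalarEst1}.
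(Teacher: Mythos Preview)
Your proof is correct and follows the same three-region decomposition as the paper, but with one organizational difference worth noting: in the gluing annulus $B_{2r_\epsilon}\setminus B_{r_\epsilon}$, the paper expands $S(\tilde\omega_\epsilon)$ around the flat metric $\omega_{euc}$ (writing $\tilde\omega_\epsilon = \omega_{euc} + \sqrt{-1}\partial\overline{\partial}(\epsilon^2\log|z| + \gamma_1(\varphi+\epsilon^2\psi))$ and using $L_{\omega_{euc}} = -\tfrac14\Delta^2_{euc}$ so that the $\log|z|$ piece drops out of the linearization), whereas you expand around $\omega$ uniformly on all of $M_p\setminus B_{r_\epsilon}$ and let the identity $L_\omega\Gamma = g$ do the cancellation of $\epsilon^2 g$ globally. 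Your version makes the role of the Section~5 construction slightly more transparent, at the cost of having to check separately that the lower-order terms in $L_\omega - (-\tfrac14\Delta^2_{euc})$ are harmless on the annulus; the paper's version avoids that by working with the Euclidean bilaplacian directly, but then tracks a longer potential. Both routes give the same numerology, in particular the same constraint $\beta<2/3$ coming from $\epsilon^4\lesssim r_\epsilon^6$.

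One small correction: your parenthetical ``in fact $O(\epsilon^{4-\delta})$, as $\beta<1$'' for the $B_{r_\epsilon}$ region is not right. That region contains annuli at all scales $r\in(\epsilon,r_\epsilon)$, and since $4-\delta>0$ the weighted norm of the bounded function $-\epsilon^2 g - S(\omega)$ is maximized near the outer scale $r\sim r_\epsilon$, giving $O(r_\epsilon^{4-\delta})$ as your main clause correctly states, not $O(\epsilon^{4-\delta})$. This does not affect the proof. (The paper dismisses this region with ``nothing to prove since there $\tilde\omega_\epsilon$ is scalar flat'', meaning exactly the trivial bound you spell out.)
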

\begin{proof}
In $B_{r_\epsilon}$ there is nothing to prove since there $\tilde{\omega}_\epsilon$ is scalar flat. In $B_{2r_\epsilon}\setminus B_{r_\epsilon}$ we have from rewriting \eqref{omegatilde}
\[\tilde{\omega}_\epsilon = \omega_{euc} + \sqrt{-1}\partial \overline{\partial}(\epsilon^2\log|z| + \gamma_1(\varphi + \epsilon^2\psi)).\]
Since $L_{\omega_{euc}} = -\frac{1}{4}\Delta^2_{euc}$ \eqref{Lformula} we have
\[S(\tilde{\omega}_\epsilon) = - \frac{1}{4}\Delta^2_{euc}(\gamma_1(\varphi + \epsilon^2\psi)) + Q_{\omega_{euc}}(\epsilon^2\log|z| + \gamma_1(\varphi + \epsilon^2\psi)).\]
Now, for the $Q_{\omega_{euc}}$ term we have, recalling that $\psi \in C_1^{4,\alpha}(M_p)$
\[||Q_{\omega_{euc}}(\epsilon^2\log|z| + \gamma_1(\varphi + \epsilon^2\psi))||_{C^{2,\alpha}_{\delta - 4}(B_{2r_\epsilon}\setminus B_{r_\epsilon})}\]
\[\leq C||\epsilon^2\log|z| + \gamma_1(\varphi + \epsilon^2\psi)||_{C^{6,\alpha}_2(B_{2r_\epsilon}\setminus B_{r_\epsilon})}||\epsilon^2\log|z| + \gamma_1(\varphi + \epsilon^2\psi)||_{C^{6,\alpha}_\delta(B_{2r_\epsilon}\setminus B_{r_\epsilon})}\]
\[\leq C(\epsilon^{2 - 2\beta}\log|\epsilon| + \epsilon^{2\beta})(\epsilon^{2 - \delta}\log|\epsilon| + \epsilon^{(4 - \delta)\beta}) \leq r_{\epsilon}^{4 - \delta},\]
where we have used Lemma~\ref{Q}, and thus
\[||S(\tilde{\omega}_\epsilon)||_{C_{\delta - 4}^{2,\alpha}(B_{2r_\epsilon} \setminus B_{r_\epsilon})} \leq Cr_{\epsilon}^{4 - \delta}.\]

On $M \setminus B_{2r_\epsilon}$ we have from \eqref{omegatilde1} $\tilde{\omega}_\epsilon = \omega + \epsilon^2\sqrt{-1}\partial\overline{\partial}\Gamma$ and thus from \eqref{g}
\[S(\tilde{\omega}_\epsilon) = S(\omega) + \epsilon^2L_\omega\Gamma + Q_\omega(\epsilon^2\Gamma) = S(\omega) + \epsilon^2g + Q_{\omega_{\epsilon}}(\epsilon^2\Gamma).\]
We can estimate the last term using Lemma~\ref{Q} once again to get
\[||Q_{\omega_\epsilon}(\epsilon^2\Gamma)||_{C^{2,\alpha}_{\delta - 4}(M\setminus B_{2r_\epsilon})} \leq C||\epsilon^2\Gamma||_{C_2^{6,\alpha}(M\setminus B_{2r_\epsilon})}||\epsilon^2\Gamma||_{C_\delta^{6,\alpha}(M\setminus B_{2r_\epsilon})}\leq C\frac{\epsilon^2}{r_\epsilon^2}\log|\epsilon|\frac{\epsilon^2}{r_\epsilon^\delta}\log
|\epsilon| \leq Cr_{\epsilon}^{4 - \delta}\]
due to our range of $\beta$. Putting these regions together gives the desired estimate.

\end{proof}

Combining all the estimates, we can solve $(*_2)$.

\begin{Proposition}\label{phiest}
    Assume $n = 2$ and $\delta < 0$ sufficiently small. Then for all $\epsilon > 0$ sufficiently small, $(*_2)$ admits a solution with $||\phi||_{C_\delta^{4,\alpha}(Bl_pM)} \leq c_1\epsilon^{2 - 2\delta}$.
\end{Proposition}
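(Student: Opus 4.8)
The plan is to recast $(*_2)$ as the fixed-point equation $\phi = \mathcal{N}_\epsilon(\phi)$ and apply the Banach fixed point theorem on a small ball, mirroring the $n > 2$ argument but keeping track of the extra factor $\epsilon^{\delta}$ that appears in the inverse bound \eqref{tildeinv}. Let $\mathcal{B} \subset C_\delta^{4,\alpha}(Bl_pM)$ be the closed ball of radius $c_1\epsilon^{2-2\delta}$, with $c_1$ the constant from Lemma~\ref{N2} (shrunk if necessary). The elementary comparison of weighted norms $\|f\|_{C_2^{k,\alpha}(Bl_pM)} \leq \epsilon^{\delta-2}\|f\|_{C_\delta^{k,\alpha}(Bl_pM)}$, valid since $\delta < 2$ and immediate from the $\epsilon$-rescaled form of the norm, shows that every $\phi \in \mathcal{B}$ satisfies $\|\phi\|_{C_2^{4,\alpha}(Bl_pM)} \leq c_1\epsilon^{-\delta}$. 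Hence Lemma~\ref{N2} applies on $\mathcal{B}$ and $\mathcal{N}_\epsilon$ is a $\tfrac12$-contraction there; that $\mathcal{N}_\epsilon$ maps $C_\delta^{4,\alpha}(Bl_pM)$ to itself in the first place follows from \eqref{tildeinv} together with Lemma~\ref{Q} for the base metric $\tilde\omega_\epsilon$ (which places $Q_{\tilde\omega_\epsilon}(\phi)$ in $C_{\delta-4}^{0,\alpha}$), exactly as in the $n>2$ case.

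It then remains to check the self-map property $\mathcal{N}_\epsilon(\mathcal{B}) \subset \mathcal{B}$ for $\epsilon$ small. Splitting $\|\mathcal{N}_\epsilon(\phi)\|_{C_\delta^{4,\alpha}} \leq \|\mathcal{N}_\epsilon(\phi) - \mathcal{N}_\epsilon(0)\|_{C_\delta^{4,\alpha}} + \|\mathcal{N}_\epsilon(0)\|_{C_\delta^{4,\alpha}}$, the first term is bounded by $\tfrac12\|\phi\|_{C_\delta^{4,\alpha}} \leq \tfrac12 c_1\epsilon^{2-2\delta}$ via the contraction estimate. For the second term we have $\mathcal{N}_\epsilon(0) = (\tilde{L}^*_{\tilde\omega_\epsilon})^{-1}\big(S(\omega) - S(\tilde\omega_\epsilon) + \epsilon^2 g\big)$, so the inverse bound $\|(\tilde{L}^*_{\tilde\omega_\epsilon})^{-1}\|_{C_{\delta-4}^{0,\alpha}\to C_\delta^{4,\alpha}} \leq K\epsilon^\delta$ of \eqref{tildeinv} and the scalar curvature estimate of Lemma~\ref{SC2} give
\[\|\mathcal{N}_\epsilon(0)\|_{C_\delta^{4,\alpha}(Bl_pM)} \leq K\epsilon^\delta \cdot C r_\epsilon^{4-\delta} = KC\,\epsilon^{\,3\delta + (4-\delta)\beta - 2}\cdot \epsilon^{\,2-2\delta}.\]
Thus $\mathcal{N}_\epsilon(\mathcal{B}) \subset \mathcal{B}$ holds as soon as $KC\epsilon^{3\delta + (4-\delta)\beta - 2} \leq \tfrac12 c_1$ for small $\epsilon$, i.e. as soon as $(4-\delta)\beta > 2 - 3\delta$.

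The only genuine point to verify is this last exponent inequality, which plays the role that $(4-\delta)\beta > 2-\delta$ played in dimension $n>2$. Since $\tfrac{2-3\delta}{4-\delta} \to \tfrac12$ as $\delta \to 0^-$ while $\beta \in (1/2,2/3)$ is fixed, it holds once $|\delta|$ is small enough — this is the meaning of ``$\delta$ chosen sufficiently small'' in the statement. With such $\delta$, $\mathcal{N}_\epsilon$ contracts $\mathcal{B}$ into itself for all small $\epsilon$, and the Banach fixed point theorem yields $\phi \in \mathcal{B}$ with $\mathcal{N}_\epsilon(\phi) = \phi$; unwinding the equivalence, this $\phi$ solves $(*_2)$ and satisfies $\|\phi\|_{C_\delta^{4,\alpha}(Bl_pM)} \leq c_1\epsilon^{2-2\delta}$. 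The reason the exponent is $\epsilon^{2-2\delta}$ rather than $\epsilon^{2-\delta}$, and the reason the constraint on $\beta$ is tighter than in higher dimensions, is precisely the extra factor $K\epsilon^\delta$ in \eqref{tildeinv}: it inflates the radius of $\mathcal{B}$ by $\epsilon^{-\delta}$ and shifts the needed inequality from $(4-\delta)\beta > 2-\delta$ to $(4-\delta)\beta > 2-3\delta$, both of which remain compatible with $\beta \in (1/2,2/3)$ for $\delta$ small. I expect this bookkeeping of the $\epsilon$-powers — ensuring the radius of $\mathcal{B}$, the contraction hypothesis of Lemma~\ref{N2}, the inverse bound, and the scalar curvature bound of Lemma~\ref{SC2} all line up — to be the only real obstacle, everything else being formally identical to the $n>2$ case.
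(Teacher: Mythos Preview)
Your proposal is correct and follows essentially the same approach as the paper: define the ball $\mathcal{B}$ of radius $c_1\epsilon^{2-2\delta}$, use the norm comparison to invoke Lemma~\ref{N2} for the contraction, then bound $\|\mathcal{N}_\epsilon(0)\|$ via \eqref{tildeinv} and Lemma~\ref{SC2} and check the resulting exponent inequality. Your condition $(4-\delta)\beta > 2 - 3\delta$ is precisely the paper's $\delta + (4-\delta)\beta > 2 - 2\delta$ rearranged, so the arguments are identical up to cosmetic differences in bookkeeping.
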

\begin{proof}
Let $\mathcal{B}$ denote the ball of radius $c_1\epsilon^{2 - 2\delta}$ in $C_\delta^{4,\alpha}(Bl_pM)$. If $\phi \in \mathcal{B}$, then $||\phi||_{C_2^{4,\alpha}(Bl_pM)} \leq c_1\epsilon^{-\delta}$. Then the above estimates give that $\mathcal{N}_\epsilon$ is a contraction on $\mathcal{B}$ for $\epsilon$ small. Again, we show that $\mathcal{N}_\epsilon(\mathcal{B}) \subset \mathcal{B}$ for $\epsilon$ small, and we conclude by the Banach fixed point theorem. 

First, we have using Lemma~\ref{N2}
\[||\mathcal{N}_\epsilon(\phi)||_{C_\delta^{4,\alpha}(Bl_pM)} \leq ||\mathcal{N}_\epsilon(\phi) - \mathcal{N}_\epsilon(0)||_{C_\delta^{4,\alpha}(Bl_pM)} + ||\mathcal{N}_\epsilon(0)||_{C_\delta^{4,\alpha}(Bl_pM)}\]
\[\leq \frac{1}{2}||\phi||_{C_\delta^{4,\alpha}(Bl_pM)} + ||\mathcal{N}_\epsilon(0)||_{C_\delta^{4,\alpha}(Bl_pM)}.\]
Recall $\mathcal{N}_\epsilon(0) = (\tilde{L}^*_{\tilde{\omega}_\epsilon})^{-1}\left(S(\omega) - S(\tilde{\omega}_\epsilon) + \epsilon^2g\right)$, so from the uniform estimates on $(\tilde{L}^*_{\tilde{\omega}_\epsilon})^{-1}$ from \eqref{tildeinv} and the estimates in Lemma~\ref{SC2} on $S(\omega) - S(\tilde{\omega}_\epsilon) + \epsilon^2g$ we have independent of $\epsilon$ the bound
\[||\mathcal{N}_\epsilon(0)||_{C_\delta^{4,\alpha}(Bl_pM)} \leq C\epsilon^\delta r_\epsilon^{4 - \delta}.\]
Now, $\delta + (4 - \delta)\beta > 2 - 2\delta$ for our range of $\beta$ if $\delta$ is chosen small enough, so then when $\epsilon$ is sufficiently small
\[||\mathcal{N}_\epsilon(0)||_{C_\delta^{4,\alpha}(Bl_pM)} \leq \frac{1}{2}c_1\epsilon^{2 - 2\delta},\]
which completes the proof.
\end{proof}

\subsection{Proof of Theorem~\ref{mainthm}} Once again, the point is that both $(*_1)$ and $(*_2)$ are equations for the difference $S(\omega) - S(\omega_\epsilon + \sqrt{-1}\partial\overline{\partial}\phi)$, and we can solve them with sufficiently good estimates on $\phi$ and $S(\omega_\epsilon)$ and their derivatives, so that the right hand side of the equation becomes small in $\epsilon$. This gives us our main theorem.

\begin{proof}{(of Theorem~\ref{mainthm})}
    Let's do the $n = 2$ case, the $n > 2$ case is entirely similar, just without the $g$ term. We can get that $(*_2)$ has a solution with $||\phi||_{C^{4,\alpha}_\delta(Bl_pM)} \leq c_1\epsilon^{2 - 2\delta}$ for all $\epsilon$ sufficiently small (Proposition ~\ref{phiest}). If we can show that
    \[\max_M |-\partial\phi \cdot \overline{\partial}S(\tilde{\omega}_\epsilon) - \partial S(\tilde{\omega}_\epsilon)\cdot\overline{\partial} \phi - \frac{1}{2}\phi\Delta_{\tilde{\omega}_\epsilon}S(\tilde{\omega}_\epsilon) - \sum_{i = 1}^d\phi(q_i)f_i - \epsilon^2g| \rightarrow 0, \text{ as } \epsilon \rightarrow 0,\]
    then $S(\omega_\epsilon + \sqrt{-1}\partial\overline{\partial}\phi)$ will have the same sign everywhere as $S(\omega)$. Since $M$ is compact, $g$ and the $f_i$ are bounded, and $|\phi| \leq c_1\epsilon^2$ on all of $M$, so we do not need to be concerned about the last two terms. For the first two terms, note first of all that $S(\tilde{\omega}_\epsilon) \equiv 0$ in $B_{r_\epsilon}$, so in that region we are fine. In $M \setminus B_{r_\epsilon}$ we have
    \[|\nabla \phi| \leq c_1r_\epsilon^{\delta - 1}\epsilon^{2 - 2\delta} = c_1\epsilon^{2 - \beta + (\beta - 2)\delta},\]
    \[|\nabla [S(\omega_\epsilon) - \epsilon^2g - S(\omega)]| \leq Cr_{\epsilon}^{\delta - 5}r_{\epsilon}^{4 - \delta} = C\epsilon^{-\beta},\]
    and $\epsilon^{2(1 - \beta) + (\beta - 2)\delta} \rightarrow 0$ as $\epsilon \rightarrow 0$ for our range of $\beta$, so the terms $-\partial\phi \cdot \overline{\partial}S(\tilde{\omega}_\epsilon)- \partial S(\tilde{\omega}_\epsilon)\cdot\overline{\partial} \phi$ are fine. What remains is the term $ - \frac{1}{2}\phi\Delta_{\tilde{\omega}_\epsilon}S(\tilde{\omega}_\epsilon)$. For this note once again that in $M \setminus B_{r_\epsilon}$ we have
    \[|\phi| \leq c_1r_{\epsilon}^\delta\epsilon^{2 - 2\delta} = c_1\epsilon^{2 + (\beta - 2)\delta},\]
    \[|\nabla^2 [S(\omega_\epsilon) - \epsilon^2g - S(\omega)]| \leq Cr_{\epsilon}^{\delta - 6}r_{\epsilon}^{4 - \delta} = C\epsilon^{-2\beta},\]
    and $\epsilon^{2(1 - \beta) + (\beta - 2)\delta} \rightarrow 0$ as $\epsilon \rightarrow 0$ for our range of $\beta$.

    Lastly, we need to check that any solution $\phi$ to $(*_1)$ or $(*_2)$ will be smooth, so that the metric we construct on $Bl_pM$ with our desired scalar curvature properties is indeed smooth. This is a routine elliptic bootstrapping argument but we will write the details anyway. Let $\phi \in C^{4,\alpha}(Bl_pM)$ be a solution to $(*_1)$ or $(*_2)$. Then in particular, in either case for some $h \in C^{\infty}(M)$,
    \[S(\tilde{\omega}_\epsilon + \sqrt{-1}\partial\overline{\partial}\phi) = \partial\phi \cdot \overline{\partial}S(\tilde{\omega}_\epsilon) + \partial S(\tilde{\omega}_\epsilon)\cdot\overline{\partial} \phi + \frac{1}{2}\phi\Delta_{\tilde{\omega}_\epsilon}S(\tilde{\omega}_\epsilon) + h.\]
    We can also split this into a system of equations
    \[\frac{1}{2}\Delta_{\tilde{\omega}_\phi}F = \partial\phi \cdot \overline{\partial}S(\tilde{\omega}_\epsilon) + \partial S(\tilde{\omega}_\epsilon)\cdot\overline{\partial} \phi + \frac{1}{2}\phi\Delta_{\tilde{\omega}_\epsilon}S(\tilde{\omega}_\epsilon) + h,\]
    \[\log \det(\tilde{g}_{\phi, j\overline{k}}) = F,\]
    where for convenience we have written $\tilde{\omega}_\phi := \tilde{\omega}_\epsilon + \sqrt{-1}\partial\overline{\partial}\phi$, and for the associated metric $\tilde{g}_\phi$. For the first equation, we see that $F$ solves a linear second order elliptic PDE with coefficients in $C^{2,\alpha}(Bl_pM)$, and the right hand side is in $C^{3,\alpha}(Bl_pM)$, so we have that $F \in C^{4,\alpha}(Bl_pM)$. Differentiating both sides of the second equation we get
    \[(\tilde{g}_\phi)^{j \overline{k}}\partial_j\partial_{\overline{k}}(\partial_i\phi) = -(\tilde{g}_\phi)^{j \overline{k}}(\partial_i\tilde{g}_{\epsilon,j\overline{k}}) + \partial_iF,\]
    so $\partial_i\phi$ can be thought of as solving a linear second order elliptic PDE with coefficients in $C^{2,\alpha}(Bl_pM)$ and right hand side in $C^{2,\alpha}(Bl_pM)$. Since we already had that $\partial_i\phi \in C^{3,\alpha}(Bl_pM)$ this yields that $\partial_i\phi \in C^{4,\alpha}(Bl_pM)$, and finally $\phi \in C^{5,\alpha}(Bl_pM)$. Continuing this argument inductively gives that $\phi$ is actually smooth.    
\end{proof}

Here we state the special case for positive (and negative) scalar curvature K\"ahler metrics, which was the original motivation for this paper.

\begin{Theorem}\label{psccor}
    Let $(M,\omega)$ be a compact K\"ahler manifold of complex dimension $n \geq 2$ with positive  (resp. negative) scalar curvature, and $p \in M$. Then for $\epsilon$ sufficiently small, there exists a K\"ahler metric on $Bl_pM$ with positive (resp. negative) scalar curvature in the class $\pi^*[\omega] - \epsilon^2[E]$.
\end{Theorem}
In Riemannian geometry, negative scalar curvature, and in fact negative Ricci curvature, has no obstruction beyond real dimension 2 \cite{Loh}, which is far from true in the K\"ahler case. However, we have shown that if a manifold admits a negative scalar curvature K\"ahler metric, all of its blowups do as well.

We want to emphasize the importance of the fact that the difference of the linearized scalar curvature and its formal adjoint is first order \eqref{diff}, as the appearance of further derivatives of $\phi$ would make the right hand side of the equations $(*_1)$ and $(*_2)$ not decay in $\epsilon$. This likely couldn't have been remedied by proving better bounds on the solution $\phi$ than what appear in Proposition~\ref{phiest}, as the higher order derivatives of a solution must be sufficiently big in order to approximate $S(\omega)$, since $\omega_\epsilon$ and $\tilde{\omega}_\epsilon$ are scalar flat near the exceptional divisor.

\section{Positive Scalar Curvature K\"ahler Surfaces and Further Discussion}

\subsection{Positive Scalar Curvature K\"ahler Surfaces}The most interesting consequences of the main theorem come in the case of positive scalar curvature in complex dimension 2. If $(M,\omega)$ is a K\"ahler manifold, and $\rho$ is its Ricci form, then the scalar curvature satisfies the equation
\[S(\omega)\frac{\omega^n}{n!} = \frac{1}{(n-1)!}\rho \wedge \omega^{n-1}.\]
Since $[\rho] = 2\pi c_1(M)$, integrating both sides of this equation gives
\begin{equation}\label{tsc}\int S(\omega)\frac{\omega^n}{n!} = \frac{2\pi c_1(M) \cup [\omega]^{n-1}}{(n-1)!}.
\end{equation}
The integral on the left we refer to as the \emph{total scalar curvature} of $\omega$. This shows that the total scalar curvature of a K\"ahler metric is an algebro-geometric quantity, and actually only depends on the K\"ahler class and the homotopy class of the complex structure of $M$. This formula can be used to show that the positivity of total scalar curvature has a dramatic effect on the holomorphic sections of the canonical bundle.

\begin{Proposition}[S.-T. Yau \cite{Yau}]
    Let $M$ be a compact complex manifold for which there exists a K\"ahler metric of positive total scalar curvature. Then $M$ has Kodaira dimension $-\infty$, that is, the canonical bundle of $M$ and all of its powers have no global holomorphic sections. Moreover, the converse is true in the case of compact K\"ahler surfaces.
\end{Proposition}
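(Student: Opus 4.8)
The plan is to prove both directions using the formula \eqref{tsc}, together with classical facts about surfaces. For the first direction, suppose $\omega$ is a Kähler metric on $M$ with positive total scalar curvature, so by \eqref{tsc} we have $c_1(M) \cup [\omega]^{n-1} > 0$. Suppose for contradiction that some power $K_M^{\otimes m}$ of the canonical bundle admits a nonzero holomorphic section $s$, where $m \geq 1$. The idea is that the divisor $D = \{s = 0\}$ is an effective divisor in the class $m c_1(K_M) = -m c_1(M)$, hence $-m\, c_1(M) \cup [\omega]^{n-1} = [D] \cup [\omega]^{n-1} \geq 0$, because an effective divisor pairs nonnegatively with the $(n-1)$-st power of a Kähler class (the restriction of $\omega^{n-1}$ to $D$ integrates to a nonnegative number, being the volume of the smooth locus of $D$ with respect to $\omega|_D$). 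This contradicts $c_1(M) \cup [\omega]^{n-1} > 0$. The only subtlety is that $D$ may be non-reduced or singular; one handles this by writing $D = \sum a_i D_i$ with $a_i > 0$ and $D_i$ irreducible, and observing $\int_{D_i} \omega^{n-1} \geq 0$ for each $i$ by passing to a resolution of $D_i$ and pulling back the closed form $\omega^{n-1}$. This shows $M$ has Kodaira dimension $-\infty$.

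For the converse in the case of compact Kähler surfaces, I would invoke the Kodaira–Enriques classification. A compact Kähler surface with Kodaira dimension $-\infty$ is, by the classification, either $\mathbb{CP}^2$, a ruled surface $\mathbb{P}(E)$ over a curve, or a blowup thereof. On $\mathbb{CP}^2$ the Fubini–Study metric has positive (constant) scalar curvature, hence positive total scalar curvature. On a ruled surface $\mathbb{P}(E) \to \Sigma$, one builds a Kähler metric making the fibers (which are rational curves) very small: since $c_1$ of a ruled surface is positive on the fiber class, by \eqref{tsc} one can arrange $c_1(M) \cup [\omega] > 0$ by taking the Kähler class $[\omega] = \pi^* [\omega_\Sigma] + t [\text{fiber class dual}]$ and letting $t$ be chosen appropriately (a standard computation with the intersection form of $\mathbb{P}(E)$). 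Alternatively, one cites the known existence of positive scalar curvature Kähler metrics on minimal ruled surfaces, which is classical. Finally, for a blowup, one applies the Main Theorem \ref{mainthm} of this paper: blowing up a point of a positive-total-scalar-curvature Kähler surface again yields a Kähler surface admitting a metric of positive total scalar curvature (indeed positive scalar curvature), in the class $\pi^*[\omega] - \epsilon^2 [E]$; and since total scalar curvature depends only on the Kähler class and $c_1$, one checks this class has positive total scalar curvature directly — but in fact the Main Theorem gives the stronger pointwise positivity, so total positivity follows by integrating.

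The main obstacle is the converse direction, and within it the honest construction of positive total scalar curvature metrics on the minimal models $\mathbb{P}(E)$, i.e.\ verifying that some Kähler class pairs positively with $c_1$. This is a finite computation on $H^2$ of a ruled surface: writing $N^2 = \deg E$ (or using the normalized bundle), $c_1 = 2C_0 + (\text{something})f$ where $C_0$ is a section and $f$ the fiber, and one must exhibit a Kähler class $\alpha$ with $c_1 \cdot \alpha > 0$; taking $\alpha = C_0 + \lambda f$ with $\lambda$ large works since $c_1 \cdot f = 2 > 0$ and $c_1 \cdot C_0$ is controlled. The reduction to minimal models via the Main Theorem is then immediate. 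I would also need to be slightly careful that "Kähler type" in the surface case excludes nothing: a compact complex surface of Kähler type with $\kappa = -\infty$ is automatically one of the listed models, and all of these are genuinely Kähler, so no non-Kähler surfaces slip in.
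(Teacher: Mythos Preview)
The paper does not give its own proof of this proposition: it is stated with attribution to Yau \cite{Yau} and used as input for Theorem~\ref{conj}. So there is no proof in the paper to compare against.

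That said, your forward direction is the standard argument and is fine. For the converse on surfaces, invoking the Main Theorem~\ref{mainthm} is logically permissible within the paper (the Main Theorem does not depend on this proposition), but it is both anachronistic and unnecessary: Yau's result long predates this paper, and positive \emph{total} scalar curvature is a purely cohomological condition by \eqref{tsc}. Under a single blowup one has $c_1(Bl_pM)=\pi^*c_1(M)-[E]$ and $[E]\cdot\pi^*[\omega]=0$, $[E]^2=-1$, so
\[
c_1(Bl_pM)\cdot(\pi^*[\omega]-\epsilon^2[E])=c_1(M)\cdot[\omega]-\epsilon^2,
\]
which is positive for small $\epsilon$. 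Thus positivity of total scalar curvature propagates under blowup by a two-line intersection computation, with no need for the gluing analysis. Combined with the minimal cases ($\mathbb{CP}^2$ and $\mathbb{P}(E)$, which you handle correctly), this gives the converse directly. You actually gesture at this yourself (``one checks this class has positive total scalar curvature directly''); that is the argument you should foreground, and the appeal to Theorem~\ref{mainthm} should be dropped.
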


We will now list some basic facts about the Kodaira-Enriques classification of complex surfaces, for which a reference is \cite{Barth}. Kodaira dimension is invariant under blowup, and the Kodaira-Enriques classification classifies the K\"ahler surfaces of Kodaira dimension $-\infty$ up to blowups at points. First of all, a complex surface is called \textit{minimal} if it has  no complex submanifolds biholomorphic to $\mathbb C\mathbb P^1$ with self-intersection $-1$. Equivalently, a complex surface is minimal if it cannot be obtained from another via blowing up at a point. Furthermore, every complex surface is obtained from a minimal one via a finite sequence of blowups at points. The Kodaira-Enriques classification gives the minimal K\"ahler surfaces of Kodaira dimension $-\infty$ as follows.

\begin{Proposition}
    Any compact K\"ahler surface of Kodaira dimension $-\infty$ is obtained from $\mathbb C\mathbb P^2$ or $\mathbb P(E)$ by a finite sequence of blowups at points, where $E \rightarrow \Sigma$ is a rank two holomorphic vector bundle over a compact Riemann surface.
\end{Proposition}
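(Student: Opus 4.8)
The plan is to derive this from the Enriques--Kodaira classification of compact complex surfaces (see \cite{Barth}), reorganized around the passage to a minimal model. First I would recall that any compact complex surface $M$ has a minimal model: when $M$ is not minimal it contains a smooth rational curve of self-intersection $-1$; contracting it produces a new compact complex surface with $b_2$ smaller by one, and since $b_2$ is a nonnegative integer this can be repeated only finitely many times before we reach a minimal surface $M_{\min}$. By construction $M$ is obtained from $M_{\min}$ by a finite (possibly empty) sequence of blow-ups at points. Blowing up a point leaves all plurigenera unchanged, so Kodaira dimension is a blow-up invariant and $M_{\min}$ still has Kodaira dimension $-\infty$; blowing up also leaves $b_1$ unchanged, and since a compact complex surface is of K\"ahler type if and only if $b_1$ is even, $M_{\min}$ is again K\"ahler. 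Hence it suffices to prove the statement for a minimal K\"ahler surface of Kodaira dimension $-\infty$.

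The core input, and the step I expect to be the genuine obstacle, is the classification of minimal surfaces according to Kodaira dimension. Minimal surfaces with $\kappa = -\infty$ split into the rational and geometrically ruled surfaces on the one hand, and the surfaces of \emph{class VII} on the other; the latter have odd first Betti number and are therefore excluded by the K\"ahler hypothesis. What remains is to pin down the K\"ahler ones: by Castelnuovo's rationality criterion ($q = P_2 = 0$ forces rationality) together with the structure theory underlying the classification, a minimal K\"ahler surface with $\kappa = -\infty$ is either $\mathbb C\mathbb P^2$ or \emph{geometrically ruled}, i.e.\ it admits a holomorphic submersion $M_{\min} \to \Sigma$ onto a compact Riemann surface all of whose fibers are isomorphic to $\mathbb C\mathbb P^1$. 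This is precisely where the argument rests on the full Enriques--Kodaira machinery rather than on anything elementary, and I would simply invoke it.

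It then remains to recognize a geometrically ruled surface as a projectivized bundle. Since the base is a curve, the function field of $\Sigma$ has trivial Brauer group (Tsen's theorem), so the $\mathbb C\mathbb P^1$-fibration $M_{\min} \to \Sigma$ admits a rational, hence holomorphic, section; a $\mathbb C\mathbb P^1$-bundle over a curve possessing a section is the projectivization $\mathbb P(E)$ of a rank two holomorphic vector bundle $E \to \Sigma$. Combining this with the reduction to the minimal model, $M$ is obtained from $\mathbb C\mathbb P^2$ or from some $\mathbb P(E)$ by a finite sequence of blow-ups at points, as claimed. For completeness one checks that all the surfaces appearing are indeed K\"ahler: compact Riemann surfaces are projective, $\mathbb P(E)$ over a projective curve is projective, $\mathbb C\mathbb P^2$ is projective, and blowing up preserves the K\"ahler condition --- so the list is consistent with the class of surfaces named in the hypothesis.
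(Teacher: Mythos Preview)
Your proposal is correct and follows the same route as the paper: the paper does not actually prove this proposition but simply records it as a consequence of the Kodaira--Enriques classification, citing \cite{Barth}. Your write-up is a faithful, more detailed unpacking of that standard argument (reduction to a minimal model, invariance of $\kappa$ and $b_1$ under blow-up, exclusion of class VII via the K\"ahler hypothesis, and identification of geometrically ruled surfaces with $\mathbb P(E)$ via Tsen's theorem).
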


A surface which is birationally equivalent to $\mathbb C\mathbb P^2$ is called \emph{rational}. Among the surfaces of Kodaira dimension $-\infty$, these are precisely those with zero first betti number \cite{Barth}. Any surface of the form $\mathbb P(E)$ as above is known as a \emph{ruled surface}. Of course, the Fubini-Study metric has positive scalar curvature, and it was shown in \cite{Yau} that ruled surfaces also admit positive scalar curvature K\"ahler metrics. Thus, our main theorem shows we can replace positive total scalar curvature in Yau's result with point-wise positive scalar curvature in the case of K\"ahler surfaces. 

Due to the gluing construction for positive scalar curvature Riemannian metrics of M. Gromov and H. B. Lawson, it was already known that these surfaces at least had Riemannian metrics of positive scalar curvature \cite{ML}. In fact, one can say even more. A complex surface is said to be of \emph{K\"ahler-type} if there exists a Riemannian metric which is K\"ahler with respect to the complex structure. This is equivalent to having even first betti number \cite{Barth}. It was shown first in the minimal case in \cite{LebSC} and then in the general case in \cite{FM} using Seiberg-Witten invariants that the existence of a positive scalar curvature Riemannian metric, not necessarily K\"ahler, on a compact complex surface of K\"ahler type, is enough to conclude the complex structure has Kodaira dimension $-\infty$. Thus, we have verified Theorem~\ref{conj}.

The contribution of the present paper is validating the presence of $(2)$ in Theorem~\ref{conj}. One can phrase this result as drawing the following equivalences. First, positive scalar curvature Riemannian geometry and positive scalar curvature K\"ahler geometry are in some sense the same on compact real $4-$manifolds for which a K\"ahler structure exists (ignoring polarizations). Second, the algebro-geometric condition of Kodaira dimension $-\infty$ for a compact complex algebraic surface can be differential geometrically restated as the existence of a K\"ahler metric with positive scalar curvature.

\subsection{Higher Dimensions and Further Questions}The natural question to ask is if Theorem ~\ref{conj} extends to higher dimensions. The equivalence with Riemannian positive scalar curvature does not persist in higher dimensions. In \cite{LebKY} the counterexample of a smooth hypersurface of degree $n + 3$ in $\mathbb C\mathbb P^{n+1}$ where $n \geq 3$ is provided. This variety is simply connected and non-spin with real dimension $\geq 6$, and thus it has a Riemannian metric of positive scalar curvature by the results of \cite{ML}. However, it has Kodaira dimension $n$ and thus no positive scalar curvature K\"ahler metric.

On the other hand, there exists many conjectures in the literature on the relationship between positive (total) scalar curvature of K\"ahler metrics (and non-K\"ahler generalizations) and algebraic geometry in arbitrary dimension; a discussion of these conjectures can be found for example in \cite{Yang}. In higher dimensions, one replaces $(5)$ in Theorem~\ref{conj} with the following notion: a complex manifold $M$ is said to be \emph{uniruled} if every point in $M$ is contained in a \emph{rational curve}, that is, a smooth complex curve biholomorphic to $\mathbb C\mathbb P^1$. We state here a K\"ahler version of the conjecture.

\begin{Conjecture}\label{Conj}
    Let $(M,\omega)$ be a compact K\"ahler manifold of arbitrary dimension. Then the following are equivalent.
    \begin{enumerate}
        \item $M$ has a K\"ahler metric for which the total scalar curvature is positive.
        \item $M$ has a K\"ahler metric of positive scalar curvature.
        \item $M$ has Kodaira dimension $-\infty$.
        \item $M$ is uniruled.
    \end{enumerate}
\end{Conjecture}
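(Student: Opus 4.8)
Two of the four implications hold in every dimension and should be disposed of first: $(2)\Rightarrow(1)$ is trivial, and $(1)\Rightarrow(3)$ is exactly the cited theorem of Yau, whose only input is the total scalar curvature identity \eqref{tsc}. The equivalence $(3)\Leftrightarrow(4)$ belongs to birational geometry rather than analysis: ``uniruled $\Rightarrow\kappa=-\infty$'' is elementary (restrict a pluricanonical section to a movable rational curve, along which $K_M$ has negative degree, to see that it vanishes, and such curves cover $M$), while ``$\kappa=-\infty\Rightarrow$ uniruled'' is a well-known conjecture, a theorem in complex dimension $\le 3$ and a consequence of the minimal model program together with abundance in general. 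I would therefore take $(3)\Leftrightarrow(4)$ as input and reduce the statement to its one genuinely analytic assertion, $(4)\Rightarrow(2)$: every uniruled compact Kähler manifold carries a Kähler metric of positive scalar curvature.

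For $(4)\Rightarrow(2)$ the plan is to propagate positive scalar curvature from a birational model produced by Mori theory. Running the MMP on a uniruled projective $M$ yields a variety $M'$, birational to $M$, equipped with a Mori fiber space $\varphi\colon M'\to Y$ with $\dim Y<\dim M$ whose general fiber is a $\mathbb{Q}$-Fano variety. On a smooth Fano $F$ positive scalar curvature is free: $c_1(F)>0$ contains a Kähler form, so by the solution of the Calabi conjecture there is $\omega_F\in c_1(F)$ with $\mathrm{Ric}(\omega_F)>0$, hence $S(\omega_F)>0$. One would then (i) build a positive scalar curvature Kähler metric on a resolution of the total space $M'$ by a fibered construction over $\varphi$ — adding a large multiple of a metric pulled back from a resolution of $Y$ to a family of fiberwise Fano metrics, so that the large fiber scalar curvature dominates the bounded horizontal contribution, in the spirit of B\'erard-Bergery-type submersion constructions — and then (ii) transport positivity from this smooth model of $M'$ to $M$ along the birational correspondence between them, crossing each blow-up in a suitably small Kähler class by Theorem~\ref{mainthm}.

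The serious difficulty is this last reduction. The varieties $M'$ and $Y$ carry terminal (hence klt) singularities, so step (i) must really be carried out on resolutions while tracking how the Kähler classes degenerate and whether the sign of $S$ survives the resolving blow-ups; here one leans crucially on the generic-Fano-fiber input, since $Y$ itself need not admit positive scalar curvature and only the fiber term compensates — this is precisely where the hypothesis $\kappa=-\infty$ enters. More seriously, $M'$ and $M$ are linked, via the weak factorization theorem, only by an alternating chain of blow-ups and blow-downs along smooth centers: Theorem~\ref{mainthm} handles the blow-ups, but a complementary blow-down statement — that contracting an exceptional divisor in a suitable class preserves positive scalar curvature — is not provided by the present paper and would have to be established, unless one can instead exhibit a single smooth variety dominating both $M$ and the Fano fibration by compositions of blow-ups (a strong-factorization-type input which is not known in general). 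I expect that furnishing this blow-down half of the picture, together with controlling the analytic degenerations forced by the singularities of the MMP, is the crux; accordingly I would aim first at the unconditional statement in dimension $\le 3$, where both $(3)\Leftrightarrow(4)$ and the relevant structure theory — including the Kähler MMP — are available.
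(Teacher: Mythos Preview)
The statement you are addressing is explicitly labeled a \emph{Conjecture} in the paper and is presented as an open problem; the paper does not prove it and hence there is no proof to compare your proposal against. The paper only records what is known: $(1)\Rightarrow(4)$ for projective manifolds and the equivalence of $(1)$ and $(4)$ for Moishezon manifolds with balanced metrics (both by citation), that $(4)\Rightarrow(3)$ is elementary, and that the paper's own contribution settles the inclusion of $(2)$ in dimension $2$. Your proposal is therefore not a proof but a research outline, and you are largely candid about this: you flag that $(3)\Rightarrow(4)$ is open beyond dimension $3$ and that the blow-down direction is missing.

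There are, however, further genuine gaps in your sketch that you do not fully name. First, Theorem~\ref{mainthm} in this paper is a statement about blowing up at a \emph{point}; weak factorization links birational smooth varieties through blow-ups along smooth \emph{centers of arbitrary dimension}, so even the blow-up half of your transport step is not covered by the present paper. Second, the fibered positive-scalar-curvature construction you invoke is Riemannian in spirit: adding a large pullback from the base to a family of fiber metrics does not in general yield a \emph{K\"ahler} metric unless one has a compatible closed $(1,1)$-form structure, and on a resolution of a Mori fiber space with singular fibers and singular base this is far from automatic. Third, the K\"ahler MMP is itself not established in all dimensions. In short, your outline identifies a plausible line of attack and is honest about several obstructions, but it should be presented as a program rather than a proof, and the reliance on Theorem~\ref{mainthm} for general smooth-center blow-ups is a concrete overreach.
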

It is reasonable to contend that this may be optimistic. In dimension $n \geq 3$, K\"ahler manifolds, and in fact projective manifolds, are not closed under bimeromorphic transformation, leading to the definition of the Fujiki and Moishezon classes. The proof of our main theorem makes heavy use of this unique property for surfaces. However, \emph{balanced manifolds}, Hermitian manifolds with co-closed associated (1,1)-form, are closed under bimeromorphic transformation \cite{AB1} \cite{AB2}. Then the approach of \cite{Yau} can be mirrored to show (1) and (4) are equivalent for balanced metrics on Moishezon manifolds \cite{Balanced}, where one must consider instead the Chern scalar curvature in the case of a non-K\"ahler balanced metric. Still, this is at most a warning sign for one particular approach to Conjecture \ref{Conj}.

Recently, W. Ou \cite{OU} extended the results of \cite{BDPP} to K\"ahler manifolds, which together with the results of \cite{Yang} implies that (1) and (4) are equivalent for K\"ahler manifolds if one weakens the condition on the metric to being \textit{Gauduchon}. Namely, uniruled is equivalent to non-pseudoeffective canonical bundle (meaning it does not admit a singular metric with weakly positive curvature) \cite{OU}, which is equivalent to the existence of a Gauduchon metric with positive total Chern scalar curvature \cite{Yang}. A Gauduchon metric is a Hermitian metric whose associated (1,1)-form's $(n - 1)$th exterior power is $\partial\overline{\partial}-$closed.

The equivalence of (3) and (4) in general is already a deep conjecture in birational geometry and so we will not address it here. The proposal of the equivalence of (1) and (2) is perhaps the most optimistic. Philosophically, we interpret this as a question of whether having point-wise positive scalar curvature is truly an algebraic concept for K\"ahler metrics, or just a vestige of the differential geometric point of view, unlike the total scalar curvature which certainly is algebraic by \eqref{tsc}. Some hope for the algebraic nature of point-wise positive scalar curvature is provided by the recent work of Z. Sha \cite{SHA} on a systolic inequality for the \textit{holomorphic} 2-systole of a positive scalar curvature K\"ahler surface. In particular, the minimum volume of a holomorphic curve in a positive scalar curvature K\"ahler surface is controlled by the inverse of the minimum of its scalar curvature. 

In this vein, one can ask if point-wise positive scalar curvature, like total scalar curvature, is really a property of the K\"ahler class, that is, the \textit{polarization}. We summarize the above in the following questions.

\begin{Question}\label{Q1}
    If $(M, \omega)$ is a compact K\"ahler manifold which admits a Gauduchon metric with positive (resp. negative) total Chern scalar curvature, then must it also admit a positive (resp. negative) total scalar curvature K\"ahler metric?
\end{Question}

\begin{Question}\label{Q2}
    If $(M, \omega)$ is a compact K\"ahler manifold with positive (resp. negative) total scalar curvature, then does the K\"ahler class $[\omega]$ also admit a K\"ahler metric with point-wise positive (resp. negative) scalar curvature?
\end{Question}
The positive case of Question~\ref{Q1} is simply the missing piece of the equivalence of (1) and (4) in Conjecture~\ref{Conj} after the work of \cite{OU} and \cite{Yang}. An affirmative answer in the negative case would also provide a K\"ahler version of the relationship between the image of total scalar curvature and pseudoeffectivity of the (anti-)canonical bundle in \cite{Yang}. An affirmative answer to Question ~\ref{Q2} would show that the existence of a positive (or negative) scalar curvature K\"ahler metric is a property of the polarization.

\bibliographystyle{abbrv}

\bibliography{bibliography}

\end{document}